\newcommand{\hull}[1]{hull\left( #1 \right)}
\newcommand{\core}[1]{core\left( #1 \right)}
\theoremstyle{definition}\newtheorem{claim}[theorem]{Claim}
\newcommand{\MCG}[1]{ \mathrm{MCG} ( #1 )}
\newcommand{\Cay}[1]{ \mathrm{Cay} (#1 ) }
\newcommand{\directions}{\mathscr{D}}
\newcommand{\adj}[1]{\bar{#1}} % adjusted balance point (arbitrary)
\newcommand{\dir}[1]{ \left[ #1 \right]  } % direction associated to a particular saddle connection 
\newcommand{\length}[1]{\ell \left( #1 \right) } 
\newcommand{\DtoDbar}{ \mathrm{proj}_{\overline{D}} } % orthogonal projection onto truncated hull
\newcommand{\hulltoDhat}{p} % collapse balls on disk
\newcommand{\DbartoDhat}{\bar{p}} % collapse balls on truncated disk
\newcommand{\geod}[2]{\overleftrightarrow{ #1 , #2}} % hyperbolic geodesic between two points
\title{Extensions of finitely generated Veech groups}
\author{Eliot Bongiovanni}
\date{13 June 2024}
\begin{document}
\maketitle

\begin{abstract}
Given a closed surface $S$ with finitely generated Veech group $G$ and its $\pi_1(S)$-extension $\Gamma$, there exists a hyperbolic space $\hat{E}$ on which $\Gamma$ acts isometrically and cocompactly.
The space $\hat{E}$ is obtained by collapsing some regions of the surface bundle over the convex hull of the limit set of $G$.
Using the nice action of $\Gamma$ on the hyperbolic space $\hat{E}$, it is shown that $\Gamma$ is hierarchically hyperbolic.
These are generalizations of \cites{ddls-extensions}{ddls-more-extensions}, which assume in addition that $G$ is a lattice.
Because finitely generated Veech groups are among the most basic examples of subgroups of mapping class groups which are expected to qualify as geometrically finite, this result is evidence for the development of a broader theory of geometric finiteness.
\end{abstract}

\tableofcontents

% SECTIONS
\pagebreak
\section{Introduction}

In the context of Kleinian groups, there is a well-defined notion of ``convex cocompactness'' and a generalization known as ``geometric finiteness''. 
For subgroups of mapping class groups, there is an analogous notion of convex cocompactness, but it is unclear what might be meant by geometric finiteness in this context.
Finitely generated Veech groups are of interest here because 
they are subgroups of the mapping class group (those which stabilize Teichm\"uller disks) and Kleinian groups (since each of those Teichm\"uller disks is isometric to the hyperbolic plane).
In general a finitely generated Veech group is not convex cocompact---neither as a subgroup of a mapping class group nor as a Kleinian group---but it is geometrically finite as a Kleinian group.
It is widely agreed upon that finitely generated Veech groups should qualify as geometrically finite as subgroups of mapping class groups \cite[Section 6]{farb:problems}.
For example, \cite[Theorem 1.4]{tang:undistorted} shows that finitely generated Veech groups are parabolically geometrically finite in the sense of \cite{ddls-more-extensions}, so these serve as a fundamental example for developing a theory of geometric finiteness in the context of mapping class groups.
It is also known that a subgroup of the mapping class group is convex cocompact if and only if its extension group is hyperbolic \cites{fm-convex-cocompactness}{hamenstaedt-hyperbolic-extensions}.
It is suspected that some notion of geometric finiteness of subgroups of mapping class groups corresponds to hierarchical hyperbolicity of the extension group \cites[Problem 6.2]{farb:problems}[Section 1.4]{ddls-more-extensions}[Section 1.1]{russell:multicurve-stabilizers}, and this paper provides further evidence towards that conclusion.

The main result is a generalization of \cite[Theorem 1.1]{ddls-extensions}. The following statement is identical, except that the term ``lattice'' has been generalized to ``finitely generated''.
The $\pi_1(S)$ extension of $G$ is precisely the group $\Gamma$ fitting into the short exact sequence $$1 \rightarrow \pi_1(S) \rightarrow \Gamma \rightarrow G \rightarrow 1, $$ and the vertex subgroups of $\Gamma$ are those that stabilize the vertices of particular Bass-Serre trees upon which $\Gamma$ acts isometrically.

\begin{restatable*}{theorem}{ddlsgeneralization}
    \label{thm:ddls-generalization}
    Suppose $G < \MCG{S}$ is a finitely generated, nonelementary Veech group with extension group $\Gamma$ and let $\Upsilon_1, \dots, \Upsilon_k < \Gamma$ be representatives of the conjugacy classes of vertex subgroups. 
    % ``nonelementary'' is to rule out cyclic or finite groups
    Then $\Gamma$ admits an isometric action on Gromov hyperbolic space $\hat{E}$, quasi-isometric to the Cayley graph of $\Gamma$ coned off along the cosets of $\Upsilon_1, \dots, \Upsilon_k$. 
\end{restatable*}

While this paper follows the basic construction and argument outline from \cite{ddls-extensions}, the details differ substantially.
First, in both cases the space $\hat{E}$ is constructed from the hyperbolic plane bundle over the convex hull of the limit set of the associated Teichm\"uller disk. However, when $G$ was also assumed to be a lattice, the convex hull of the limit set coincided with the Teichm\"uller disk. 
When $G$ is only finitely generated, the convex hull of the limit set is generally some (strict) subset of the Teichm\"uller disk. 
As a consequence, the proof of the ``fan lemma'' (\cref{lem:fan-lemma}, which generalizes \cite[Lemma 4.12]{ddls-extensions}) is reformulated entirely. 
The fan lemma is the cornerstone for constructing sets that form slim triangles, which are used to prove hyperbolicity of $\hat{E}$ via the guessing geodesics criterion.
(The guessing geodesics criterion essentially says that the existence of \emph{paths} that form slim triangles is sufficient evidence for the existence of \emph{geodesics} that form slim triangles; see \cref{lem:guessing-geodesics}. The version used here is due to Masur-Schleimer \cite{ms:geometry-of-disk-complex} and Bowditch \cite{bowditch:uniform-hyperbolicity}.) 
As in \cite{ddls-extensions}, this paper constructs the ``guessed geodesics'' by concatenating hyperbolic geodesic segments orthogonal to the fibers with saddle connections within fibers.
When $G$ was also assumed to be a lattice, the Veech dichotomy ensured that every saddle connection arose as a boundary component of a cylinder decomposition.
It is therefore necessary to generalize the Veech dichotomy to the case of finitely generated Veech groups.
Though this generalization is known to the experts and follows quickly from other long-established results, it appears to be missing from the literature.
\begin{restatable*}[\theoremttt{Generalized Veech dichotomy}]{theorem}{generalizedveechdichotomy}
    \label{thm:generalized-veech-dichotomy}
    Let $(S,X,q)$ be a flat surface with finitely generated Veech group $G$. Every direction $\alpha \in \Lambda(G) \subset \partial D$ is either minimal and uniquely ergodic or completely periodic and invariant by a parabolic element of the maximal Veech group.
\end{restatable*}
\noindent \cref{sec:veech-dichotomy}, therefore, can be read independently of the rest of the paper.
Its relevance here is to arrive at \cref{cor:to-generalized-veech-dichotomy}, which states that saddle connections either arise as boundary components of cylinder decompositions (as in the classical Veech dichotomy) or are associated to directions not in the limit set of $G$.
Therefore when $G$ is only finitely generated, it is possible for the ``guessed geodesics'' to include saddle connections which are arbitrarily long---an issue not encountered when $G$ was also assumed to be a lattice---and this complicates the process of proving that these paths form slim triangles.

It is important to note that some arguments in this paper require that the finitely generated Veech group $G$ has at least one parabolic element.
If $G$ has no parabolic elements (equivalently, if $G$ is convex cocompact) then it is already known that the extension group is hyperbolic by the aforementioned results of \cites{fm-convex-cocompactness}{hamenstaedt-hyperbolic-extensions}.
However, the arguments in this paper provide a new proof in this special case with little extra work.
\begin{restatable*}[{special case of \cites{fm-convex-cocompactness}{hamenstaedt-hyperbolic-extensions}}]{theorem}{oldtheoremnewproof}
    \label{thm:new-old}
    Let $G < \MCG{S}$ be a finitely generated Veech group with extension group $\Gamma$. If $G$ has no parabolic elements, then $\Gamma$ is Gromov hyperbolic.
\end{restatable*}

After establishing the main result of \cref{thm:ddls-generalization}, hierarchical hyperbolicity of the extension group is a natural next step.
\begin{restatable*}[\theoremttt{Hierarchical hyperbolicity}]{theorem}{hierarchicalhyperbolicity}
    \label{thm:hhs}
    Let $G < \MCG{S}$ be a finitely generated Veech group with extension group $\Gamma$. Then $\Gamma$ is a hierarchically hyperbolic group.
\end{restatable*}
\noindent The proof of this theorem proceeds almost exactly as in \cite{ddls-more-extensions}, after patching one subcase.

A paper currently in progress will prove that an extension of a finitely generated Veech group is quasi-isometrically rigid.

\subsection*{Outline}
\cref{sec:basics} surveys the background information needed to approach this problem and refers to resources that explore these concepts in more depth.
\cref{sec:veech-dichotomy} is dedicated to a generalization of the classical Veech dichotomy, which lays the foundation for the rest of the paper.
\cref{sec:construction} constructs the space $\hat{E}$ featured in \cref{thm:ddls-generalization}.
\cref{sec:hyperbolicity} proves that $\hat{E}$ is hyperbolic (\cref{thm:hat-e-is-hyperbolic}), 
and shows how the rest of the statement of \cref{thm:ddls-generalization} follows.
A key result for the proof of hyperbolicity of $\hat{E}$, \cref{thm:collapsed-preferred-paths-form-slim-triangles}, is deferred to and comprises the entirety of \cref{sec:slimness}.
Finally, \cref{sec:hhs} proves that an extension of a finitely generated Veech group is hierarchically hyperbolic.
For the reader's convenience, a list of symbols and references to the pages on which they first appear is included at the end.

\subsection*{Acknowledgements}
I would like to thank my PhD advisor, Chris Leininger, for being extraordinarily generous with his time and attention as I prepared this paper. Chris provided valuable insights into existing research and ensured that the final manuscript is as sensible as possible, though neither of us could come up with a less silly term for the ``horopoints'' described in \cref{def:horoballs} and appearing throughout the paper.

I am financially supported in part by NSF-1842494 and NSF DMS-1745670.

%%%
%%%
%%%
%%%
%%%
\section{Background Concepts}
\label{sec:basics}

% Some more sources commented out.
This section surveys concepts used throughout the paper. References for deeper reading are included at the beginning of each subsection.

\subsection{Fundamental geometry}

\subsubsection{Paths in metric spaces}
See \cite[Chapter I.1]{bh-nonpositive-curvature}.

Let $(X,d)$ be a metric space. A \termttt{path} from $x$ to $y$ in $X$ is a continuous map $c$ from an interval $[0,\ell] \subset \R$ to $X$ with $c(0) = x$ and $c(\ell)=y$.
The \termttt{length} of the path $c$ is 
$$ \sup_P \sum_{i=1}^{n_P} d(c(t_{i-1}), c(t_{i})), $$
where the supremum is taken over all partitions $P$ of $[0, \ell]$ with $t_0 = 0$ and $t_{n_P} = \ell$.
If the length of $c$ is finite, then $c$ is called \termttt{rectifiable}.
The space $X$ is called \termttt{rectifiably path connected} if any two points are connected by a rectifiable path.
If for any $x,y \in X$ the distance $d(x,y)$ is precisely the infimum of the lengths of all paths from $x$ to $y$, then $(X,d)$ is called a \termttt{length space}.

A path $c$ is called a \termttt{geodesic} if for all $t,t' \in [0, \ell]$,
$$ d \left( c(t),c(t') \right) = \lvert t - t' \rvert. $$
The image of the map $c$ is also sometimes referred to as a geodesic.
The space $X$ is called a \termttt{geodesic space} if any two points are connected by a geodesic.
A geodesic between $x$ and $y$ in a geodesic space is typically denoted $[x,y]$, which is the notation used throughout this section.
    \nomenclature[(x,y)]{$[x,y]$}{geodesic segment between $x$ and $y$}

\subsubsection{Quotient spaces}
See \cite[2, 64-70]{bh-nonpositive-curvature}.

Let $(X,d)$ be a metric space, and let $\sim$ be an equivalence relation on elements of $X$. 
A \termttt{chain} from $x$ to $y$ in $X$ is a sequence $\set{x_1,y_1, x_2, y_2, \dots, x_n, y_n}$ with $y_i \sim x_{i+1}$.
For $x',y' \in X' = X/\sim$ representing $x,y \in X$, respectively, the \termttt{quotient pseudometric} is defined by
$$ d'(x',y') := \inf_{C} \sum_{i=1}^n d(x_i,y_i), $$
where the infimum is taken over all chains joining $x$ to $y$.
A \termttt{pseudometric} meets all of the requirements to be a metric except that it is perhaps not \termttt{positive-definite}, meaning that $d'(x',y') = 0$ need not imply $x' = y'$.
The following lemma is used several times in this paper to verify that a quotient space is in fact a length space.

\begin{lemma}[{\cite[Lemma I.5.20]{bh-nonpositive-curvature}}]
\label{lem:bh-length-space}
Let $(X,d)$ be a length space, let $\sim$ be an equivalence relation on $X$ and let $d'$ be the quotient pseudometric on $X' = X / \sim$. If $d'$ is a metric then $(X', d')$ is a length space.
\end{lemma}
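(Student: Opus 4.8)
The plan is to verify the defining property of a length space directly: for every pair $x', y' \in X'$, we must show that $d'(x',y')$ equals the infimum of the lengths of paths in $(X', d')$ from $x'$ to $y'$. One inequality is automatic. Given any path $c \colon [0,\ell] \to X'$ from $x'$ to $y'$ and any partition $0 = t_0 < \cdots < t_{n} = \ell$, the triangle inequality for $d'$ yields $\sum_{i} d'(c(t_{i-1}), c(t_i)) \geq d'(x', y')$; taking the supremum over partitions gives $\length{c} \geq d'(x',y')$, so the infimum of path lengths is at least $d'(x',y')$. This step is also where the hypothesis that $d'$ is a genuine metric gets used: it is what makes $(X',d')$ a bona fide metric space in which ``length'' and ``length space'' are meaningful notions.

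For the reverse inequality, fix $\epsilon > 0$. Let $\pi \colon X \to X'$ be the quotient map. Using the trivial two-term chain one sees $d'(\pi(a),\pi(b)) \leq d(a,b)$ for all $a,b \in X$, so $\pi$ is $1$-Lipschitz, hence continuous. By definition of the quotient pseudometric there is a chain $\{x_1, y_1, \dots, x_n, y_n\}$ with $\pi(x_1) = x'$, $\pi(y_n) = y'$, and $y_i \sim x_{i+1}$ for each $i$, such that $\sum_{i=1}^n d(x_i, y_i) < d'(x', y') + \epsilon$. Since $(X, d)$ is a length space, for each $i$ we may choose a rectifiable path $c_i$ in $X$ from $x_i$ to $y_i$ with $\length{c_i} < d(x_i, y_i) + \epsilon / n$. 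Then $\pi \circ c_i$ is a path in $X'$ from $\pi(x_i)$ to $\pi(y_i)$, and because $\pi$ is $1$-Lipschitz every partition sum for $\pi \circ c_i$ is dominated by the corresponding one for $c_i$, so $\length{\pi \circ c_i} \leq \length{c_i}$.

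Finally, $y_i \sim x_{i+1}$ gives $\pi(y_i) = \pi(x_{i+1})$, so the finitely many paths $\pi \circ c_1, \dots, \pi \circ c_n$ concatenate into a single path $c$ in $X'$ from $\pi(x_1) = x'$ to $\pi(y_n) = y'$ satisfying
$$ \length{c} \;=\; \sum_{i=1}^n \length{\pi \circ c_i} \;\leq\; \sum_{i=1}^n \length{c_i} \;<\; \sum_{i=1}^n \left( d(x_i, y_i) + \frac{\epsilon}{n} \right) \;<\; d'(x', y') + 2\epsilon. $$
As $\epsilon$ was arbitrary, the infimum of lengths of paths from $x'$ to $y'$ is at most $d'(x',y')$; together with the first paragraph, this infimum equals $d'(x',y')$, so $(X',d')$ is a length space. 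I do not anticipate a serious obstacle: the only real points of care are the bookkeeping of the error terms (distributing $\epsilon$ over the $n$ chain segments so the total does not blow up) and the observation that $\pi$ is $1$-Lipschitz, which is exactly what prevents lengths from growing under projection. It is worth flagging the minor subtlety that a length space need not be geodesic, which is why we work with $\epsilon$-approximate paths rather than honest geodesics throughout.
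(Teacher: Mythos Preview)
The paper does not supply its own proof of this lemma; it simply quotes the result from Bridson--Haefliger \cite[Lemma~I.5.20]{bh-nonpositive-curvature} and uses it as a black box. Your argument is correct and is essentially the standard proof found there: approximate the quotient distance by a chain, lift each chain segment to a near-optimal path in $X$ using the length-space hypothesis, project via the $1$-Lipschitz quotient map, and concatenate.
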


\subsubsection{Gromov hyperbolicity}
\label{sec:gromov-hyperbolicity}
See \cite[Chapter III.H.1]{bh-nonpositive-curvature} and \cite{vaisala}.

Let $(X,d)$ be a metric space with $x \in X$. The \termttt{Gromov product of $y,z \in X$ with respect to $x$} is
$$ \left( y \cdot z \right)_x := \frac{1}{2} \left( d(y,x) + d(z,x) - d(y,z) \right). $$
For $\delta \geq 0$, $X$ is called \termttt{$\delta$-hyperbolic} (or \termttt{Gromov hyperbolic with hyperbolicity constant $\delta$}) if 
$$ (x \cdot y)_w \geq \min\left\{(x \cdot z)_w, (y \cdot z)_w\right\} - \delta $$
for all $w,x,y,z \in X$.
When $X$ is a geodesic space, $\delta$-hyperbolicity is equivalent to every geodesic triangle in $X$ being $\delta'$-slim for some $\delta' \geq 0$ depending on $\delta$:
A \termttt{geodesic triangle} consists of three points $x,y,z \in X$ and choices of geodesics $[x,y]$, $[y,z]$, and $[x,z]$
and is \termttt{$\delta$-slim} if each side is contained in the $\delta$-neighborhood of the other two, i.e.,
\begin{align*}
    &[x,z] \subset N_\delta \left( [x,y] \cup [y,z] \right), \\
    &[x,y] \subset N_\delta \left( [y,z] \cup [x,z] \right), \text{ and} \\
    &[y,z] \subset N_\delta \left( [x,z] \cup [x,y] \right), \\
\end{align*}
where $N_\delta$ denotes the $\delta$-neighborhood.

In this paper, any mention of hyperbolicity is Gromov hyperbolicity (with some hyperbolicity constant).

\subsubsection{Coarse geometry}
See \cite[138-144]{bh-nonpositive-curvature} and \cite{vaisala}.

Let $(X,d_X)$ and $(Y,d_Y)$ be metric spaces. Given $\lambda \geq 1$ and $\varepsilon \geq 0$, a map $f: X \rightarrow Y$ is called a \termttt{$(\lambda, \varepsilon)$-quasi-isometric embedding} if for all $x,y \in X$,
$$ \frac{1}{\lambda} d_X(x,y) - \varepsilon \leq d_Y(f(x),f(y)) \leq \lambda d_X(x,y) + \varepsilon. $$
When the domain $X$ is an interval in $\R$ or $\Z$, the map $f$ is called a \termttt{quasi-geodesic}; the image of the map is also sometimes referred to as a quasi-geodesic.
If in addition there is a constant $K \geq 0$ so that $N_K \left( f(X) \right) = Y$---that is, if $f$ is \termttt{coarsely surjective}---then $f$ is called a \termttt{quasi-isometry}, and the spaces $X$ and $Y$ are said to be quasi-isometric.
If $f$ is a quasi-isometry, then there exists a \termttt{quasi-inverse} of $f$, which is a quasi-isometry $g: Y \rightarrow X$ so that $d_X(x,g(f(x)))$ and $d_Y(y,f(g(y)))$ are uniformly bounded for all $x \in X$ and $y \in Y$.

Gromov hyperbolicity of length spaces is an invariant of quasi-isometry: If $(X,d_X)$ and $(Y,d_Y)$ are length spaces, $X$ is hyperbolic, and $f: X \rightarrow Y$ is a quasi-isometry, then $Y$ is hyperbolic (perhaps with a different hyperbolicity constant from that of $X$) \cite[16]{vaisala}. 

\subsubsection{Cayley graphs}

Given a finitely generated group $G$ with generating set $\calA$, the \termttt{Cayley graph of $G$}, denoted $\Cay{G}$, is the graph whose vertices are elements of $G$ and whose edges connect $g$ to $ga$ for any $g \in G$, $a \in \calA$. Defining all edges to be unit length makes the Cayley graph a metric space, where the induced metric is precisely the word metric with respect to the set $\calA$.
The group $G$ equipped with the word metric is quasi-isometric to its Cayley graph, and any two Cayley graphs for $G$ (obtained from two different generating sets) are quasi-isometric (see \cite[139-141]{bh-nonpositive-curvature}).

Given a finite family of subgroups $\calH = \{H_1, H_2, \dots, H_n\}$ of $G$, the \termttt{Cayley graph of $G$ with respect to $\calH$} or \termttt{coned-off Cayley graph}, denoted $\Cay{G, \calH}$ is the graph obtained from $\Cay{G}$ by adding a vertex $V_{gH}$ for each left coset $gH$ (with $g \in G$ and $H \in \calH$) and attaching $V_{gH}$ by an edge of length $1/2$ to each vertex of $\Cay{G}$ which is an element of the coset $gH$. The isometric left action of $G$ on $\Cay{G}$ extends to an isometric action of $G$ on $\Cay{G, \calH}$: For all $g,g' \in G$ and $H \in \calH$, set $g V_{g'H} = V_{gg'H}$. 
The nontrivial vertex stabilizers of the action are conjugate to subgroups of $\calH$.
See \cite{cc-relative-hyperbolicity}, including the following variant of the Schwarz-Milnor lemma.

\begin{theorem}[{\cite[Theorem 5.1]{cc-relative-hyperbolicity}}]
\label{thm:groupy-schwarz-milnor}
Let $G$ be a finitely generated group and suppose that $G$ admits a discontinuous (that is, with discrete orbits), cocompact, isometric action on a length space $X$. Let $\calH$ denote a collection of subgroups of $G$ consisting of exactly one representative of each conjugacy class of maximal isotropy subgroups for the action of $G$ on $X$. Then $\calH$ is finite and, for any finite generating set $\calA$ of $G$, the coned-off Cayley graph $\Cay{G, \calH}$ is quasi-isometric to $X$. In particular, if $X$ is a hyperbolic space then the coned-off Cayley graph is hyperbolic. 
\end{theorem}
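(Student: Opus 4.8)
The plan is to adapt the proof of the Schwarz-Milnor lemma, with the coning-off operation playing the role that discarding \emph{finite} stabilizers plays in the classical argument. Write $\hat G := \Cay{G,\calH}$; note $\hat G$ is a connected graph (since $\calA$ generates $G$), hence a length space. Fix a basepoint $x_0 \in X$ and, by cocompactness, a radius $R>0$ with $G \cdot \overline{B(x_0,R)} = X$. For each conjugacy class of maximal isotropy subgroups I would choose the representative $H \in \calH$ so that $H = \mathrm{Stab}(y_H)$ for some point $y_H$ with $d_X(x_0,y_H) \le R$; this is possible because every isotropy subgroup is a point stabilizer and cocompactness lets one translate any point into $\overline{B(x_0,R)}$. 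Define $\Phi \colon \hat G \to X$ on vertices by $\Phi(g) = g x_0$ for $g \in G$ and $\Phi(V_{gH}) = g y_H$ for coset vertices; the latter is well defined because $H$ fixes $y_H$, so the value is independent of the chosen coset representative. The goal is to show $\Phi$ is a quasi-isometry, and it suffices to verify the quasi-isometry inequalities on the ($1$-dense) vertex set of $\hat G$.

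Next I would dispatch the two easy conditions. Coarse surjectivity is exactly the choice of $R$: every point of $X$ lies within $R$ of some $\Phi(g)$. For the upper (coarse Lipschitz) bound it is enough to bound $d_X(\Phi(v),\Phi(w))$ over adjacent vertices of $\hat G$: if $v=g$ and $w=ga$ with $a \in \calA$, then $d_X(gx_0,gax_0) = d_X(x_0,ax_0) \le L := \max_{a\in\calA} d_X(x_0,ax_0)$; if $v = V_{gH}$ and $w = gh \in gH$, then since $h$ is an isometry fixing $y_H$ we get $d_X(gy_H, gh x_0) = d_X(y_H, h x_0) = d_X(y_H,x_0) \le R$. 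Thus adjacent vertices map within $\max\{L,R\}$ of one another.

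The core of the argument is the lower bound $d_{\hat G}(v,w) \le \lambda\, d_X(\Phi(v),\Phi(w)) + \varepsilon$, and this is where the coning-off is genuinely used. Reducing coset vertices to adjacent group elements (at $\hat G$-cost $1/2$, $X$-cost $\le R$), it suffices to take $v=g$, $w=g'$ in $G$. Since $X$ is a length space, join $gx_0$ to $g'x_0$ by a path of length $\le d_X(gx_0,g'x_0)+1$, cut it into $n \le d_X(gx_0,g'x_0)+2$ subarcs of length $\le 1$, and, by cocompactness, pick $g=g_0,g_1,\dots,g_n=g'$ with $g_ix_0$ within $R$ of the $i$-th breakpoint. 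Then $d_X(x_0, g_i^{-1}g_{i+1}x_0) \le 2R+1$, so by discreteness of orbits (this is the content of ``discontinuous''), $\overline{B(x_0,2R+1)}$ meets $Gx_0$ in finitely many points and $g_i^{-1}g_{i+1}$ lies in one of the corresponding finitely many cosets $u_1\mathrm{Stab}(x_0),\dots,u_N\mathrm{Stab}(x_0)$. Writing $g_i^{-1}g_{i+1} = u_j s$ with $s\in\mathrm{Stab}(x_0)$,
$$ d_{\hat G}(1, g_i^{-1}g_{i+1}) \le \max_j d_{\hat G}(1,u_j) + \mathrm{diam}_{\hat G}\bigl(\mathrm{Stab}(x_0)\bigr). $$
The first term is a fixed finite constant; the second is finite because $\mathrm{Stab}(x_0)$ sits inside a maximal isotropy subgroup $M = bHb^{-1}$ with $H\in\calH$, and every $m = bhb^{-1}\in M$ satisfies $d_{\hat G}(1,m)\le 2\,d_{\hat G}(1,b)+1$, since $b$ and $bh$ both lie in the coned-off coset $bH$ and $h\in H$. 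Hence $d_{\hat G}(g_i,g_{i+1})\le D$ for a uniform $D$ (by $G$-invariance of $d_{\hat G}$), and summing over $i$ gives $d_{\hat G}(g,g') \le Dn \le D\bigl(d_X(\Phi(g),\Phi(g'))+2\bigr)$, the desired bound.

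Finally, $\calH$ is finite: every conjugacy class of maximal isotropy subgroup is represented by the stabilizer of a point of $\overline{B(x_0,R)}$, and a discreteness argument for the orbits of such singular points bounds the number of classes. The ``in particular'' clause then follows from the quasi-isometry invariance of hyperbolicity for length spaces recalled above: $\hat G$ is a length space, so if $X$ is hyperbolic then so is $\hat G$. I expect the only genuine subtlety to be pinning down exactly which form of ``discontinuous'' is needed to make the two finiteness assertions work (finitely many orbit points in a ball; finitely many conjugacy classes of maximal isotropy subgroups); everything else is a careful reprise of the Schwarz-Milnor bookkeeping, with coning-off absorbing the (possibly infinite) isotropy subgroups that the classical proof can simply delete.
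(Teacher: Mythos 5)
The paper does not prove this statement; it is quoted from Charney--Crisp, so your argument has to stand on its own --- and it has a genuine gap exactly at the step where the coning-off is supposed to do its work. You claim that ``by discreteness of orbits, $\overline{B(x_0,2R+1)}$ meets $Gx_0$ in finitely many points,'' and then that $g_i^{-1}g_{i+1}$ lies in one of finitely many cosets $u_j\,\mathrm{Stab}(x_0)$. Discreteness of the orbit only gives a uniform separation constant $\varepsilon>0$ for $Gx_0$ (so that $d(x,gx)<\varepsilon$ forces $g\in\mathrm{Stab}(x)$); it does not make balls meet the orbit in finite sets unless $X$ is proper, which is not assumed and fails in precisely the situations this theorem is designed for. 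Concretely, let $G=A*B$ with $A,B$ infinite, acting on its Bass--Serre tree: the action is isometric, cocompact, with discrete orbits, yet the ball of radius $2$ about the vertex stabilized by $A$ contains infinitely many orbit vertices, and the set $\{g: d(x_0,gx_0)\le 2\}=A\cup ABA$ is not a finite union of cosets of $\mathrm{Stab}(x_0)=A$ (though it \emph{is} bounded in the coned-off graph). The same failure occurs in the application in this paper: for $X=\hat{E}$ the point stabilizers $\Upsilon_i$ are infinite while $\mathrm{Stab}(x_0)$ is finite, so balls contain infinitely many orbit points spread over infinitely many $\mathrm{Stab}(x_0)$-cosets. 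Your chain argument also cannot be rescued by subdividing more finely, because the breakpoints are approximated by the single orbit $Gx_0$, which is only $R$-dense; you are therefore stuck bounding $\hat{d}$ over $\{g:d(x_0,gx_0)\le 2R+1\}$, a fixed \emph{large} scale at which discreteness alone says nothing.

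A correct proof has to exploit discreteness at small scale together with compactness of a set $K$ with $GK=X$: for each $x$ there is $\varepsilon_x>0$ with $\{g:d(x,gx)<\varepsilon_x\}=\mathrm{Stab}(x)$ and, for $y$ near $x$, $\mathrm{Stab}(y)\subseteq\mathrm{Stab}(x)$; covering $K$ by finitely many such balls one shows both that there are finitely many conjugacy classes of maximal isotropy subgroups (your finiteness sketch via ``orbits of singular points'' is too vague --- fixed-point sets need not be discrete) and that the set of transition elements $\{g: d(y,gy')\le\delta \text{ for some } y,y'\in K\}$, for $\delta$ small compared with the finitely many $\varepsilon_{x_i}$, is contained in finitely many cosets $k_{ji}\,\mathrm{Stab}(x_i)$, hence is bounded in $\Cay{G,\calH}$ since each $\mathrm{Stab}(x_i)$ lies in a conjugate of some $H\in\calH$. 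One then runs the path-subdivision at scale $\delta$, writing breakpoints as $h_j y_j$ with $y_j\in K$ (not as points of $Gx_0$), and sums the uniform bounds on $\hat{d}(1,h_j^{-1}h_{j+1})$. Your upper bound, coarse surjectivity, the remark that changing the representatives in $\calH$ within their conjugacy classes only changes the coned-off graph by a quasi-isometry, and the final appeal to quasi-isometry invariance of hyperbolicity for length spaces are all fine; the missing piece is the bounded-transition-set argument above, which is the actual content of the theorem.
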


\subsubsection{Mapping class groups}
See \cite{fm-primer}.

Let $S$ be a closed, orientable surface of genus at least $2$.
The \termttt{mapping class group of $S$} is
$$ \MCG{S} := \pi_0 \left( \mathrm{Homeo}^+(S) \right). $$
That is, the mapping class group is the group of isotopy classes of orientation-preserving homeomorphisms on $S$.

Mark a point $*$ on $S$ and denote the marked surface by $\dot{S}$. The mapping class group of the marked surface is then
$$ \MCG{\dot{S}} := \pi_0 \left( \mathrm{Homeo}^+(S, *) \right), $$
i.e., the isotopy classes of elements of $\mathrm{Homeo}^+(S)$ which fix the marked point $*$.
These mapping class groups fit into the \termttt{Birman exact sequence}, 
\begin{equation}
\label{eq:birman-exact-sequence}
    1 \rightarrow \pi_1(S) \rightarrow \MCG{\dot{S}} \rightarrow \MCG{S} \rightarrow 1,
\end{equation}
where $\MCG{\dot{S}} \rightarrow \MCG{S}$ is the map which forgets the marked point on $S$ \cites{fm-primer}{birman-book}.
For a subgroup $G < \MCG{S}$, denote its preimage by $\Gamma_G < \MCG{\dot{S}}$ and observe that it fits into a short exact sequence
$$ 1 \rightarrow \pi_1(S) \rightarrow \Gamma_G \rightarrow G \rightarrow 1 $$
which includes into the Birman exact sequence above.
The group $\Gamma_G$ is called the \termttt{$\pi_1(S)$-extension of $G$} (or simply the extension of $G$), and it is the fundamental group of an $S$-bundle with monodromy an isomorphism onto $G$.

\subsection{Flat surfaces}
See \cites{gardiner:teichmueller-theory-and-quadratic-differentials}{strebel:quadratic-differentials}. 
% or nice summary in \cite{masur-teichmuller-space}.

Let $S$ be a closed, connected, and oriented surface of genus at least $2$. 
Equip $S$ with a \termttt{complex structure} $X$, which is an atlas of charts $\set{z_\alpha: U_\alpha \rightarrow \C}$ whose transition functions $z_\beta^{-1} \circ z_\alpha$ are biholomorphic wherever the composition is defined. 
A \termttt{quadratic differential} for $X$, denoted $q_X$ or more simply $q$, is a nonzero holomorphic section of the square of the canonical line bundle over $(S,X)$. 
\nomenclature[q]{$q$}{quadratic differential or flat metric for a complex structure in the Teichm\"uller space of a closed surface of genus at least $2$}
Note in particular that any nonzero quadratic differential has finitely many zeroes. 
The pair $(X,q)$ are a \termttt{flat structure} on $S$, described as follows.

In a small disk neighborhood of a nonzero point $p$, choose a coordinate chart $z$ so that $p$ corresponds to $z(p) = 0$ and pick a branch of $q^{1/2}(z)$. The \termttt{natural coordinate} or \termttt{preferred coordinate} in a neighborhood of $p$ is given by
    $$ \zeta(z) = \int_0^z q^{1/2}(u) du. $$
In this coordinate, $q$ is given by $q(z) dz^2 = d\zeta^2$. In the neighborhood of a zero of order $k \geq 1$ there are natural coordinates such that $q(z) dz^2 = \zeta^k d\zeta^2$. 

Away from the zeroes of $q$, the transition functions for overlapping preferred coordinates of $q$ are locally given by $z \mapsto \pm z + c$ for some $c \in \mathbb{C}$.
Because the Euclidean metric is invariant under these transition functions, the Euclidean metric pulls back to a metric on $S$ minus the zeroes of $q$. 
To complete the pullback metric, each zero of order $k-2$ is filled back in so that a neighborhood of the point is isometric to the image of $k$ Euclidean half planes glued together in a cyclic pattern by identifying the positive real axis of one half plane with the negative real axis of another (and only one other) half plane. 
A point of $S$ corresponding to a zero of $q$---that is, a singular point---of order $k-2$ is called a \termttt{cone point} with \termttt{cone angle} $k \pi$. The resulting metric is called a \termttt{flat metric}, which is also denoted by $q$. 
(Using the same notation for both the quadratic differential and the flat metric is a bit imprecise: The flat metric determines the quadratic differential up to multiplication by a nonzero complex number.)

Given a complex structure $X$ on $S$ and an associated flat metric $q$, the triple $(S,X,q)$ is a \termttt{flat surface}. 
    \nomenclature[(S,X,q)]{$(S,X,q)$}{flat surface determined by a closed surface $S$ of genus at least $2$, a choice of complex structure $X$, and its associated quadratic differential $q$}
Where the structure and metric are implied, a flat surface is often denoted simply as $S$.

\subsubsection{Directions}
\label{sec:directions}

Because the transition functions for overlapping preferred coordinates for $q$ are locally given by $z \mapsto \pm z + c$, a line in the tangent space at any nonsingular point can be parallel translated (everywhere except the cone points) to produce a smooth line field, which corresponds to a line in the projective tangent space at any nonsingular point.
In other words, any tangent line to the surface has a distinguishable direction, up to a rotation by $\pi$, which is consistent everywhere on the surface away from the cone points.
This is known as the \termttt{space of directions}. It is sometimes denoted by $\P^1(q)$, but in this paper the notation $\partial D$ (introduced in \cref{sec:teichmueller-space}) is used instead.

\subsubsection{Geodesics}

Because the flat metric $q$ is Euclidean away from the cone points, (local) geodesics on $S$ minus the cone points are straight lines. 
A geodesic containing a cone point locally consists of two straight line segments meeting at the cone point and forming angles of at least $\pi$ on both sides.
A geodesic between two cone points and with no cone points on its interior is called a \termttt{saddle connection}.
In particular, a saddle connection $\sigma$ determines a line in the tangent space at any of its interior points, and therefore determines a point in the space of directions which is denoted by $\dir{\sigma}$.
The length of a saddle connection $\sigma$ is denoted by $\length{\sigma}$.
% Note that this is specific to the metric in a given fiber.
    \nomenclature[l(sigma)]{$\length{}$}{length of a saddle connection $\sigma$}

\subsubsection{Foliations}

The line field obtained by parallel translating a tangent line around $S$ minus the cone points also corresponds to a foliation of $S$ minus the cone points by geodesics, which extends to a singular foliation over all of $S$ when the cone points are included back in.
So for any direction $\alpha$ there is a corresponding (singular) foliation $\calF(\alpha)$ in direction $\alpha$.

It is possible that for particular $\alpha$ the foliation $\calF(\alpha)$ defines a \termttt{cylinder decomposition} in which every nonsingular leaf is a closed geodesic, and the singular leaves are concatenations of saddle connections which separate $S$ into a union of Euclidean cylinders.
A description of some of the directions $\alpha$ for which this occurs is given by \cref{thm:generalized-veech-dichotomy}.

\subsubsection{The universal cover}

Let $\tilde{S}$ denote the universal cover of $S$. 
The complex structure $X$ and quadratic differential $q$ on $S$ can be pulled back to $\tilde{S}$, and the covering also gives a canonical identification of the directions on $S$ with those on $\tilde{S}$.
The covering map from $\tilde{S}$ to $S$ sends cone points to cone points (and saddle connections to saddle connections) and is a local isometry.
% This is relevant to the proof of \cref{lem:euclidean-triangle-bounded-area}.
For simplicity, the same notations are used for the structure, quadratic differential (and associated metric), and space of directions for $\tilde{S}$.

Because every cone point on $S$ has cone angle greater than $2\pi$, both $S$ and $\tilde{S}$ are nonpositively curved via Gromov's link condition (see for instance \cite[Chapter II.5]{bh-nonpositive-curvature}).
Therefore the pulled back metric on $\tilde{S}$ is $\CAT{0}$, and so $\tilde{S}$ is uniquely geodesic.
The geodesics are analogous to those in $S$, consisting of concatenations of saddle connections (with perhaps a straight line segment at the beginning or end of the path).
Given a Euclidean cylinder in $S$, the preimage is a union of strips, and the covering map restricts to a universal covering of the cylinder on each strip.

\subsection{The Teichm\"uller space}
\label{sec:teichmueller-space}

See \cites{masur:teichmuller-space}[Section 8.2]{gl:quasiconformal-teichmueller-theory}.

Two complex structures $X$ and $Y$ on $S$ are called \termttt{equivalent} if there is a map $f: (S,X) \rightarrow (S,Y)$, biholomorphic in the coordinate charts, which is isotopic to the identity on $S$. 
The \termttt{Teichm\"uller space} of $S$, denoted $\calT(S)$, is the space of equivalence classes of complex structures on $S$. The notation $X$ is used both for a particular complex structure and its isotopy class $X \in \calT(S)$. The Teichm\"uller space of $S$ comes equipped with a metric known as the Teichm\"uller metric, which will not appear in this paper explicitly, but whose relevant features are described below.

Denoting by $\dot{S}$ the surface $S$ with a marked point, the space $\calT(\dot{S})$ is the space of isotopy classes of complex structures in which isotopies are also required to fix the marked point.
A fibration of Teichm\"uller spaces called the \termttt{Bers fibration} is given by 
\begin{equation}
\label{eqn:bers-fibration}
    \tilde{S} \rightarrow \calT(\dot{S}) \rightarrow \calT(S),
\end{equation}
obtained by forgetting the marked point, where the fiber over a point $X \in \calT(S)$ is canonically identified with $\tilde{S}$ \cites{bers-fiber-spaces}{ls-hyperbolic-spaces-in-teichmueller-spaces}.

\subsubsection{The Teichm\"uller disk}

Given a flat surface $(S,X,q)$, where $q$ has preferred coordinates $\zeta_i$, a new complex structure can be obtained from any $A \in \mathrm{SL}_2(\R)$ acting on by applying $A$ to the given atlas---that is, a new atlas $\set{A \circ \zeta_i}$, where $A$ is acting as a linear transformation of $\R^2 \cong \C$. 
The new complex structure is denoted $A \cdot (X,q) = (A \cdot X, A \cdot q)$. 
Note that this deformation preserves the zeroes (including their orders) of the original structure $(X,q)$. 
The map $\mathrm{SO}(2) A \mapsto A \cdot X$ gives a homeomorphism from $\mathrm{SO}(2) \backslash \mathrm{SL}_2(\R)$ to the image of the orbit $D \subset \calT(S)$ of $(X,q)$, since $\mathrm{SO}(2)$ preserves the underlying complex structure. The disk $D$ is called the \termttt{Teichm\"uller disk} of $q$, on which the Teichm\"uller metric is the push-forward of the Poincar\'e metric (by Teichm\"uller's theorem). 
\nomenclature[D]{$D$}{Teichm\"uller disk (determined by a flat structure $(X,q)$, although this is often dropped from the notation)}
As a consequence, 
$$ \mathrm{Isom}^+(D) \cong \mathrm{PSL}_2(\R), $$
where the latter is the group of orientation-preserving isometries of the hyperbolic plane.
It is often convenient to think of $D$ as the Poincar\'e disk model of the hyperbolic plane, where each point represents a complex structure on $S$ and traversing a geodesic between two points in $D$ corresponds to varying the underlying flat structure by affine deformations.

More specifically, a geodesic through $X \in D$ is the map $t \mapsto A_t \cdot (X,q)$, where $\{ A_t \}_{t \in \R}$ is a symmetric, 1-parameter hyperbolic subgroup of $\mathrm{SL}_2(\R)$---that is, given by a matrix conjugate to 
$$ \pm \begin{bmatrix}
    e^t & 0 \\
    0 & e^{-t}
\end{bmatrix}$$
by an element of $\mathrm{SO}(2)$.
All geodesics in $D$ can be obtained by $\mathrm{SL}_2(\R)$ conjugates of the same 1-parameter family.
The unit eigenvectors of such a transformation are orthogonal, leading to an identification of the boundary circle $\partial D$ with the space of directions $\P^1(q)$ by associating the endpoint of the positive ray with the direction of the contracting eigenvector. 
As the ray approaches the boundary point associated to direction $\alpha$, the length of any saddle connection in direction $\alpha$ (if one exists) shrinks exponentially. Since this occurs for a ray based at any point and ending at the boundary point associated to direction $\alpha$, any horocycle based at the boundary point is a level set for the length of a saddle connection in direction $\alpha$. 

\subsection{Veech groups}
See \cites{thurston-diffeomorphisms-of-surfaces}[Chapters 7-8]{gl:quasiconformal-teichmueller-theory}{hs-veech-surfaces}.

For a flat surface $(S,X,q)$ and its associated Teichm\"uller disk $D \subset \calT(S)$, a \termttt{Veech group} of $q$ is a subgroup of the stabilizer of $D$.
(Note also that the stabilizer of $D$ is a subgroup of the mapping class group of $S$.)
In this paper, a choice of Veech group of $q$ is denoted by $G$.
Equivalently, a Veech group is a subgroup of the affine homeomorphisms of $(S,X,q)$ in preferred coordinates for $q$ which fix the cone points, projected into the mapping class group.

Any affine homeomorphism in the Veech group has a derivative in preferred coordinates which is well-defined up to sign and determines an element of $\mathrm{PSL}_2(\R)$, the group of orientation-preserving isometries of $D$---or, equivalently, of the hyperbolic plane. 
The action of the Veech group $G$ on $D$ is conjugate to an action on $\H$ via the derivative (an element of $\mathrm{PSL}_2(\R)$).
Since these two actions are essentially the same, both perspectives are employed throughout this paper.
Elements of $G$ are referred to as \termttt{parabolic}, \termttt{hyperbolic}, or \termttt{elliptic} according to whether their images under the derivative homomorphism are parabolic, hyperbolic, or elliptic isometries of $\H$, respectively. 
Finally, because $\MCG{S}$ acts properly discontinuously on $\calT(S)$, a Veech group $G$ acts properly discontinuously on $\H$.
Therefore the image of $G$ under the derivative homomorphism is a \termttt{Fuchsian group}, that is, a discrete subgroup of $\mathrm{PSL}_2(\R)$.

In this paper, all Veech groups are assumed to be finitely generated.

\subsubsection{Limit points}

The \termttt{limit set} of $G$, denoted $\Lambda(G)$, is the set of all possible limit points of a $G$-orbit, $G z$, for $z \in D$. 
A Fuchsian group $G$ is called \termttt{nonelementary} if $\abs{\Lambda(G)} > 2$, which implies that $G$ is not virtually cyclic.
% \cite[Theorem 10.1.2, Section 8.1]{beardon-discrete-groups}
The limit set $\Lambda(G)$ is contained in the boundary circle $\partial D$ \cite[Corollary 2.2.7]{katok-fuchsian-groups}. 
% Katok states all this in terms of hyperbolic space rather than $D$.
By identifying the boundary circle $\partial D$ with $\P^1(q)$, each $\alpha \in \Lambda(G)$ corresponds to a direction $\alpha \in \P^1(q)$.
Throughout this paper, limit points and their associated directions are referred to interchangeably.

A limit point is called a \termttt{parabolic fixed point} if it is the fixed point of a parabolic element of $G$.
A limit point $\alpha \in \partial D$ is a \termttt{conical limit point} if for any ray $R$ ending at $\alpha$ there is a point $Y \in D$, a sequence $\set{g_i}_{i=1}^\infty$ of elements of $G$, and an $\epsilon > 0$ such that $\set{g_i Y}_{i=1}^\infty$ converges to $\alpha$ within the $\epsilon$-neighborhood of $R$ in $D$ {\cites[617]{ratcliffe-foundations}}. 
% see also \cite[Definition 10.2.2]{beardon-discrete-groups}
Since a finitely generated Veech group is Fuchsian, the following characterization of limit points applies.

\begin{theorem}[{\cite[Theorem 10.2.5]{beardon-discrete-groups}}]
\label{thm:limit-pts-of-fin-gen-fuchsian-gps}
A Fuchsian group is finitely generated if and only if each limit point is either a parabolic fixed point or a conical limit point. 
% or \cite[Theorem 12.4.5]{ratcliffe-foundations}
\end{theorem}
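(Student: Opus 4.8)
The plan is to route the argument through the theory of geometrically finite Fuchsian groups, combining the classical equivalence between finite generation and the existence of a finite-sided convex fundamental polygon with a geometric classification of limit points in terms of how geodesic rays behave in the quotient $\H/G$. The organizing principle is the following dictionary, which I would set up first: a limit point $\alpha$ is conical exactly when some (equivalently, any) geodesic ray $R$ ending at $\alpha$ projects to a path in $\H/G$ that returns infinitely often to a fixed compact set, whereas $\alpha$ is a parabolic fixed point exactly when the ray eventually enters and stays inside an embedded cusp neighbourhood. With this in hand, both implications become statements about when these two behaviours exhaust the possibilities.

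For the forward direction, suppose $G$ is finitely generated. The first step is to invoke the classical structure theorem (Siegel; see the surrounding results in \cite{beardon-discrete-groups}) that a finitely generated Fuchsian group admits a Dirichlet fundamental polygon $P$ with finitely many sides. Taking the closure $\overline{P}$ in $\H \cup \partial \H$, the set $\overline{P} \cap \partial \H$ consists of finitely many isolated ideal vertices and finitely many free arcs; the ideal vertices are precisely the points fixed by the parabolic side-pairings (hence parabolic fixed points), while the free arcs lie in the complement of $\Lambda(G)$. The second step takes an arbitrary limit point $\alpha$ that is not a parabolic fixed point, together with a ray $R$ to $\alpha$, and tracks the sequence of translates $g_n P$ that $R$ crosses. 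Because $P$ has finitely many sides and $\alpha$ is not an ideal vertex, $R$ cannot escape along a cusp, so it meets infinitely many translates of a fixed compact core of $P$; pulling back by the $g_n$ then produces an orbit converging to $\alpha$ within a bounded neighbourhood of $R$, so $\alpha$ is conical.

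For the reverse direction, suppose every limit point is a parabolic fixed point or a conical limit point, and argue contrapositively: if $G$ is not finitely generated, I would produce a limit point that is neither. Consider the convex core $C$, the quotient by $G$ of the convex hull of $\Lambda(G)$. Finite generation of $G$ is equivalent to $C$ having finite topological type, equivalently to the compactness of $C$ with its cusp neighbourhoods removed (from which finite generation follows by a Schwarz--Milnor argument in the spirit of \cref{thm:groupy-schwarz-milnor}, applied to the cocompact action on the truncated core). If instead $C$ has infinite type, I would construct a geodesic ray whose projection leaves every compact subset of $C$ without eventually settling into a single cusp---for instance a ray crossing an infinite escaping sequence of disjoint essential arcs, or threading a sequence of accumulating cusps. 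The endpoint $\alpha \in \Lambda(G)$ of a lift of this ray is then neither conical (its projection is not recurrent to any compact set) nor parabolic (it is fixed by no parabolic, since the ray does not stabilize in one cusp), contradicting the hypothesis.

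The main obstacle is the geometric bookkeeping that ties the combinatorics of the fundamental polygon---or, dually, the structure of the ends of $C$---to the recurrence-versus-cusp dichotomy for rays: in the forward direction one must rule out that a ray to a non-parabolic point escapes to infinity without being recurrent, and in the reverse direction one must actually build such an escaping, non-recurrent ray when the core has infinite type. Both rely on the uniform control supplied by finite-sidedness, so the genuine crux is the equivalence between finite generation and finite-sidedness of the Dirichlet domain, which is where the substantive classical input enters and which I would treat as the technical heart of the proof.
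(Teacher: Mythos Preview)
The paper does not prove this theorem at all: it is stated as a background result and attributed to \cite[Theorem 10.2.5]{beardon-discrete-groups}, with no argument given. So there is no ``paper's own proof'' to compare against; the paper simply imports the statement as a known fact about Fuchsian groups and uses it (once, in the proof of the generalized Veech dichotomy) to know that every limit point of a finitely generated Veech group is parabolic or conical.

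Your outline is a faithful sketch of the standard proof one finds in Beardon: the forward direction via the finite-sidedness of a Dirichlet polygon (Siegel's theorem) together with tracking how a ray to a non-parabolic limit point crosses translates of a compact core, and the reverse direction via the contrapositive by building a non-recurrent, non-cuspidal ray when the convex core has infinite type. The only caveat is that your reverse direction is more heuristic than the forward one---``threading a sequence of accumulating cusps'' or ``crossing an infinite escaping sequence of disjoint essential arcs'' needs to be made precise to guarantee the resulting limit point really lies in $\Lambda(G)$ and is not accidentally parabolic---but this is exactly the bookkeeping you flag as the crux, and it is handled in the cited reference. Since the paper defers entirely to Beardon, your write-up is more than the paper itself provides.
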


\noindent The precise definition of a conical limit point is only necessary for a proof of the generalized Veech dichotomy in \cref{sec:veech-dichotomy}.

\subsubsection{Convex hull and convex core}
\label{sec:convex-hull-and-convex-core}

The \termttt{convex hull of $G$}, denoted $\hull{G}$, is the intersection of all hyperbolic half planes whose closures in $D \cup \partial D$ contain $\Lambda(G)$.
    \nomenclature[hull G]{$\hull{G}$}{convex hull of $G$}
The convex hull is the minimal closed, convex, $G$-invariant subset of $D$ {\cite[637]{ratcliffe-foundations}}.
Consequently, the \termttt{convex core} of $D / G$, denoted $\core{D / G} := \hull{G} / G$, is the smallest closed convex subset of $D / G$ for which the inclusion is a homotopy equivalence. 
% {\cite[637]{ratcliffe-foundations}, \cite[1323]{leininger-positive-multi-twists}}
When $G$ is finitely generated, $\abs{\Lambda(G)} \leq 1$ implies that $\core{D / G} = \emptyset$, otherwise $\core{D / G}$ is nonempty {\cite[637]{ratcliffe-foundations}}. 
Moreover, when $G$ is finitely generated, the convex core has finite area, and truncating the convex core along its cusps results in a compact space \cites[Proposition 8.4.3]{thurston-geometry-topology-of-3-manifolds}[Theorem 10.1.2]{beardon-discrete-groups}.

\subsubsection{Lattices}
\label{sec:lattices}

If the quotient space of $D$ under the action of $G$ has finite area, then the Veech group $G$ is called a \termttt{lattice}, and the associated flat surface $(S,X,q)$ is called a \termttt{lattice surface}.
In particular, if $G$ is a lattice then $\hull{G} = D$.

%%%
%%%
%%%
%%%
%%%
\section{Generalized Veech dichotomy}
\label{sec:veech-dichotomy}

The Veech dichotomy characterizes foliations on a lattice surface.
A generalization to foliations in certain directions on any flat surface follows quickly from several well-known results, but appears to be missing from the literature.
The result most relevant to this paper is \cref{cor:to-generalized-veech-dichotomy} at the end of this section. Otherwise, this section is self-contained.

\begin{definition}%[{\cite[1024]{mt-rational-billiards}}]
Let $\calF_\alpha$ denote a foliation of $(S,X,q)$ in the direction $\alpha$.
If each of the leaves of $\calF_\alpha$ is dense in $(S,X,q)$, then $\calF_\alpha$ is called \termttt{minimal}; if in addition the transverse measure is unique up to scalar multiplication, then it is called \termttt{uniquely ergodic}.
If, on the other hand, each of of the leaves of $\calF_\alpha$ is either closed or a saddle connection, then $\calF_\alpha$ is called \termttt{completely periodic}.
\end{definition}

A saddle connection is considered a leaf, so when $\calF_\alpha$ is minimal, there are no saddle connections on $S$ in the direction $\alpha$.
% This is due to a technical condition of the definition of leaves, which requires that leaves be nonsingular.
When $\calF_\alpha$ is completely periodic, this is means that there is a cylinder decomposition of $S$ in the direction $\alpha$.
% That is, $\alpha$ is a parabolic fixed point of $G$.

\begin{theorem}[\theoremttt{Classical Veech dichotomy}\footnote{This statement is primarily based on \cite[Theorem 5.10]{mt-rational-billiards}.
An alternate statement and proof (in the language of flows rather than foliations) is \cite[Theorem 1]{hs-veech-surfaces}.
Both papers note the proof of \cite[Theorem 3.4]{vorobets-planar-structures} as another resource.
The original theorem statement is due to \cite{veech-teichmueller-curves}.}]
Let $(S,X,q)$ be a lattice surface. 
Every direction $\alpha \in \partial D$ is either minimal and uniquely ergodic or completely periodic and invariant by a parabolic element of the maximal Veech group.
\end{theorem}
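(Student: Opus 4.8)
The plan is to run the classical recurrence/non-divergence dichotomy for the Teichm\"uller geodesic ray determined by $\alpha$, feeding the recurrent case into Masur's criterion and the divergent case into the hyperbolic geometry of the finite-area quotient $D/G$. First I would fix a direction $\alpha$ and, after applying an element of $\mathrm{SO}(2)$, assume $\alpha$ is the vertical direction; then I let $g_t = \mathrm{diag}(e^{t},e^{-t})$ act on $(X,q)$, contracting the vertical direction, so that $\{g_t\cdot(X,q)\}_{t\geq 0}$ is the hyperbolic geodesic ray in $D$ converging to the boundary point $\alpha$ (the vertical direction being the contracting eigendirection). Since $\{g_t\}$ preserves both the area and the zero data of $q$, this ray projects to a geodesic ray $\rho$ in $D/G$, which for a lattice surface is a finite-area hyperbolic orbifold (equivalently, $\rho$ lies in the associated Teichm\"uller curve). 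Exactly one of the following holds: (i) $\rho$ returns infinitely often to some fixed compact subset of $D/G$; or (ii) $\rho$ is divergent, i.e.\ it eventually leaves every compact subset.

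In case (i), there is no saddle connection in direction $\alpha$: being vertical, such a saddle connection would shrink to zero length under $g_t$, pushing $\rho$ out a cusp of $D/G$ and contradicting (i). Absence of a saddle connection --- hence of any closed leaf or cylinder --- in direction $\alpha$ forces every leaf of $\calF_\alpha$ to be dense, so $\calF_\alpha$ is minimal. Moreover, a compact subset of $D/G$ has compact image in the area-one stratum containing $q$, so $\{g_t\cdot q\}$ is recurrent to a compact subset of the stratum along a sequence $t_n\to\infty$; Masur's criterion then gives that $\calF_\alpha$ is uniquely ergodic. This is the first alternative.

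In case (ii): because $G$ is a lattice, $D/G$ has compact thick part and finitely many cusp ends, so a divergent ray visits the thick part only finitely often and, being connected, is eventually trapped inside a single cusp neighborhood, going arbitrarily deep into it; lifting to $D$, the ray is eventually contained in a single horoball and therefore converges to the center of that horoball, a parabolic fixed point of $G$. Hence there is a parabolic $\phi\in G$ --- a fortiori an element of the maximal Veech group, which contains $G$ --- fixing $\alpha$. It then remains to show that a parabolic affine automorphism fixing the direction $\alpha$ forces $S$ to decompose into cylinders in direction $\alpha$: normalizing so that $\alpha$ is horizontal and $D\phi$ is a nontrivial parabolic fixing the horizontal direction, the structure theory of parabolic affine automorphisms (going back to Thurston and Veech) shows that $S$ is a union of finitely many horizontal cylinders permuted by $\phi$, on each of which $\phi$ restricts to a power of a Dehn twist; in particular $\calF_\alpha$ is completely periodic. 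This is the second alternative, and since a completely periodic foliation has closed leaves it is never minimal, so the two cases are mutually exclusive.

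The step I expect to be the main obstacle is this last one --- deducing a cylinder decomposition in direction $\alpha$ from the existence of a parabolic in the Veech group fixing $\alpha$ --- since it is the geometric heart of the theorem and the place where one must invoke, or reconstruct, the analysis of parabolic affine automorphisms of flat surfaces; a secondary technical point is translating Masur's criterion, usually stated for geodesics in the moduli space of quadratic differentials, into a statement about the ray $\rho$ living in $D/G$. I would expect the version needed elsewhere in the paper --- $G$ finitely generated but not necessarily a lattice, with $\alpha$ restricted to $\Lambda(G)$ --- to follow by the same argument, since a ray toward a limit point stays inside the convex core $\hull{G}/G$, which is again a finite-area hyperbolic orbifold with cusp ends.
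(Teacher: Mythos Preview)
The paper does not give its own proof of the classical statement; it is quoted with references and then subsumed by the proof of the generalized version (\cref{thm:generalized-veech-dichotomy}), of which the lattice case is explicitly noted to be a special case. Comparing your argument to that proof: the approaches are essentially the same. Both split on whether the Teichm\"uller ray toward $\alpha$ is recurrent, use the absence of a vertical saddle connection to deduce minimality (the paper packages this as \cref{lem:return-to-compact-implies-no-saddle} and \cref{rmk:no-saddle-implies-minimal}), invoke Masur's criterion for unique ergodicity, and in the nonrecurrent/parabolic case cite Thurston for the cylinder decomposition. The only difference is cosmetic: you phrase the dichotomy geometrically as recurrent versus divergent in the finite-area quotient $D/G$, while the paper phrases it algebraically as conical versus parabolic limit point via Beardon's characterization of limit sets of finitely generated Fuchsian groups. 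For a lattice every boundary point is a limit point, so the two dichotomies coincide, and your final remark correctly anticipates that the algebraic framing is what makes the argument go through verbatim for merely finitely generated $G$ restricted to $\Lambda(G)$.

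One small imprecision worth tightening: in case (i) you say a shrinking saddle connection ``pushes $\rho$ out a cusp of $D/G$''. What you actually need (and what the paper's \cref{lem:return-to-compact-implies-no-saddle} states) is only that the length of the shortest saddle connection is a continuous function on $D/G$, hence bounded below on any compact set; a vertical saddle connection of length tending to zero therefore forces the ray to eventually leave every compact set, contradicting recurrence. Whether the exit is through a cusp of $D/G$ is not needed and is a separate (true, but circular here) statement.
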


When the maximal Veech group $G$ is not a lattice, there may be directions on $S$ which do not correspond to points in the limit set of $G$. The general statement considers only those directions which correspond to limit points for a finitely generated Veech group.
\generalizedveechdichotomy

\begin{remark}
    When the Veech group is a lattice, the limit set is all of $\partial D$. Therefore the classical Veech dichotomy appears as a special case of this theorem.
\end{remark}

\begin{definition}[{\cite[1033]{mt-rational-billiards}}]
    The \termttt{Teichm\"uller geodesic flow} is the one parameter subgroup of $\text{SL}_2(\mathbb{R})$ given by
    $$ g_t = \begin{bmatrix} e^{t} & 0 \\ 0 & e^{-t} \end{bmatrix} $$
    acting on the space of all quadratic differentials.
    A flat structure $(X,q)$ is called \termttt{divergent} if $g_t \cdot (X,q)$ eventually exits every compact set in the moduli space $\calT(S) / \MCG{S}$ as $t \rightarrow \infty$. % {\cite[1041]{mt-rational-billiards}}
\end{definition}

Similar to the proof of the classical Veech dichotomy, the proof of the generalized Veech dichotomy will require Masur's criterion. 
\begin{theorem}[\theoremttt{Masur's criterion}; {\cite[Theorem 3.8]{mt-rational-billiards}}]
\label{thm:masurs-criterion}
Let $\alpha$ be the vertical direction.
If the foliation $\calF_\alpha$ of $(S,X,q)$ is minimal but not uniquely ergodic, then $(X,q)$ is divergent. 
\end{theorem}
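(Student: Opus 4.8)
The plan is to prove the statement directly, using Mumford's compactness criterion (in Masur's form for quadratic differentials) as the bridge between flat geometry and the moduli space: a family of unit-area flat surfaces lying over $\calT(S)/\MCG{S}$ is precompact if and only if the length of the shortest saddle connection is bounded below. Consequently, to show that $(X,q)$ is divergent it suffices to exhibit a sequence of saddle connections $\gamma_n$ and times $t_n \to \infty$ with $\length{\gamma_n} \to 0$ measured in $g_{t_n}\cdot(X,q)$; the systole of $g_{t_n}\cdot(X,q)$ then tends to $0$ and the orbit eventually leaves every compact set. So the whole problem reduces to manufacturing short saddle connections at diverging times out of the failure of unique ergodicity.

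The elementary computation underpinning everything is the flat-length formula. If $\gamma$ has holonomy $(x(\gamma), y(\gamma))$ in period coordinates, then its length in $g_t\cdot(X,q)$ is $\sqrt{e^{2t}x(\gamma)^2 + e^{-2t}y(\gamma)^2}$, which is minimized over $t$ at the value $\sqrt{2\,\lvert x(\gamma)\,y(\gamma)\rvert}$, attained at $t^{*}=\tfrac14\log\!\big(y(\gamma)^2/x(\gamma)^2\big)$. Thus a saddle connection is made short by the flow precisely when the \emph{area} $\lvert x(\gamma)\,y(\gamma)\rvert$ of its holonomy vector is small, and its minimizing time tends to $+\infty$ exactly when $\gamma$ is increasingly close to vertical, i.e.\ $\lvert y(\gamma)\rvert \gg \lvert x(\gamma)\rvert$. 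So the target is a sequence of nearly-vertical saddle connections with holonomy area tending to $0$ and minimizing times tending to infinity.

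The heart of the proof, and the step I expect to be the main obstacle, is producing this family from non-unique ergodicity. I would pass to the first-return interval exchange $T$ of the vertical flow on a horizontal transversal: minimality of $\calF_\alpha$ makes $T$ minimal, and non-unique ergodicity of $\calF_\alpha$ is equivalent to $T$ admitting two projectively distinct invariant measures. Applying Rauzy--Veech renormalization produces a sequence of induced exchanges on nested shrinking subintervals, encoded by a product of nonnegative integer matrices, and the invariant measures of $T$ are exactly the vectors in the intersection of the associated nested cones. Non-unique ergodicity says that this intersection contains two independent rays, so the cones do \emph{not} collapse to a single ray. Through the Veech--Zorich dictionary, each renormalization step corresponds to a controlled return of $g_t\cdot(X,q)$ to a fundamental domain at a time $t_n\to\infty$, and the width--length data of the induced exchange is precisely the holonomy data of the renormalized surface. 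The failure of the cones to collapse means two transverse measures of comparable mass persist on a surface of fixed area; this pinches the transversal and forces the holonomy area of suitable connecting saddle connections to zero along the renormalization times, yielding exactly the nearly-vertical, vanishing-area family sought above.

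Finally I would assemble the pieces: the times $t_n$ are cofinal in $[0,\infty)$, the constructed $\gamma_n$ satisfy $\lvert x(\gamma_n)\,y(\gamma_n)\rvert\to 0$ with minimizing times tending to infinity, so the systole of $g_{t_n}\cdot(X,q)$ tends to $0$ and Mumford's criterion yields divergence. The delicate quantitative points are ensuring that the two invariant measures remain genuinely transverse (so that the enclosed holonomy \emph{area}, not merely one coordinate, is driven to zero) and that the minimizing times accumulate at $+\infty$ rather than at a finite value. These are exactly the estimates carried out in Masur's original treatment and recorded in \cite[Theorem 3.8]{mt-rational-billiards}, on which the present statement relies.
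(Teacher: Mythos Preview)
The paper does not prove this statement at all: Theorem~\ref{thm:masurs-criterion} is quoted from \cite[Theorem 3.8]{mt-rational-billiards} as a black box and is immediately applied (in contrapositive form) inside the proof of the generalized Veech dichotomy. So there is no ``paper's own proof'' to compare against; the author is simply invoking Masur's criterion from the literature.

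Your sketch is a reasonable high-level outline of one route to the result---reducing divergence to systole decay via Mumford's criterion, then extracting short nearly-vertical saddle connections from the non-collapse of the Rauzy--Veech cone---but it remains a sketch rather than a proof. The step you flag as delicate (turning ``two transverse measures persist'' into a quantitative statement that the \emph{area} $\lvert x(\gamma_n)\,y(\gamma_n)\rvert$ of specific saddle connections tends to zero along renormalization times going to $+\infty$) is genuinely the hard part, and you have not actually carried it out; you defer to \cite[Theorem 3.8]{mt-rational-billiards} at exactly the point where the real work would begin. For the purposes of this paper that is fine, since the author does the same, but as a self-contained proof of Masur's criterion your proposal is incomplete.
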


\begin{lemma}[from proof of {\cite[Theorem 1.8]{mt-rational-billiards}}]
\label{rmk:no-saddle-implies-minimal}
If there is no saddle connection in direction $\alpha$, then the foliation $\calF_\alpha$ is minimal. 
\end{lemma}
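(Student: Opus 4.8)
The plan is to prove the contrapositive: if $\calF_\alpha$ is not minimal, then $(S,X,q)$ carries a saddle connection in direction $\alpha$. The engine is the classical structure theorem for singular foliations defined by a quadratic differential (compare the proof of \cite[Theorem 1.8]{mt-rational-billiards}): for any direction $\alpha$, the surface $S$ decomposes as a finite union of closed subsurfaces with boundary, the \emph{components} of $\calF_\alpha$, glued to one another along their boundary edges; each component is either a maximal Euclidean cylinder swept out by closed leaves in direction $\alpha$ (a periodic component) or a subsurface on which $\calF_\alpha$ restricts to a minimal foliation; and every boundary edge of every component is a saddle connection in direction $\alpha$, with the cone points of $q$ sitting at the corners.

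First I would invoke this decomposition. If $(S,X,q)$ carried no saddle connection in direction $\alpha$, then no component could have nonempty boundary, so the decomposition would consist of a single component equal to $S$ itself. That component cannot be periodic: a maximal cylinder strictly smaller than $S$ must carry a cone point on its boundary (otherwise it could be enlarged), hence a saddle connection on its boundary; and a cylinder equal to all of $S$ would exhibit the closed surface $S$ as foliated entirely by closed nonsingular leaves, forcing $q$ to have no zeros and $S$ to be a torus, contradicting the standing hypothesis that $S$ has genus at least $2$. Therefore the unique component is of the minimal type, i.e.\ every leaf of $\calF_\alpha$ is dense in all of $S$ --- which is precisely the assertion that $\calF_\alpha$ is minimal, completing the contrapositive.

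The step that needs genuine care is the structure theorem itself, on two points. First, one must rule out an exceptional (nowhere locally dense, Cantor-type) minimal set for $\calF_\alpha$; this is where finiteness of the zero set of $q$ enters, via the first-return interval exchange transformation to a transverse segment having only finitely many discontinuities, together with the fact that a minimal interval exchange has every orbit dense in the entire interval (Keane). Second, one must know that the frontier of a non-full component is genuinely a finite union of saddle connections joining cone points, rather than an infinite separatrix accumulating on a closed leaf --- again a consequence of maximality of cylinders and finiteness of the singular set. An essentially equivalent route that sidesteps quoting the structure theorem wholesale is to fix a horizontal transversal $I$, pass to the first-return interval exchange $T$, observe that ``no saddle connection crossing $I$'' is exactly Keane's infinite-distinct-orbit condition for $T$, conclude $T$ is minimal, and then carry out the bookkeeping needed to promote density of $T$-orbits in $I$ to density of every leaf of $\calF_\alpha$ in $S$ (handling leaves disjoint from $I$ and the separatrices through cone points). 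Either way, once the structure input is in place the deduction is immediate; that structure input is the main obstacle.
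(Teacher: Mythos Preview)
Your proposal is correct and reconstructs essentially the same argument as the cited reference. The paper does not give its own proof of this lemma; it simply records the statement with the attribution ``from proof of \cite[Theorem 1.8]{mt-rational-billiards}'' and moves on. What you have written is precisely the content of that proof in Masur--Tabachnikov: decompose the surface along the direction-$\alpha$ foliation into periodic cylinders and minimal components whose common boundaries are unions of saddle connections, then observe that the absence of saddle connections forces a single component which cannot be a cylinder (genus $\geq 2$), hence is minimal. Your alternative route through Keane's condition on the first-return interval exchange is also standard and equivalent; either packaging is fine, and you have correctly flagged the two delicate points (no exceptional minimal sets, frontier consists of finitely many saddle connections) that carry the real weight.
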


\begin{lemma}
\label{lem:return-to-compact-implies-no-saddle}
Let $\alpha \in \Lambda(G) \subset \partial D$ and let $R$ be a ray in $D$ ending at $\alpha$. If the image of $R$ under the projection $D \rightarrow D / G$ returns to a compact set infinitely often, then there is no saddle connection on $(S,X,q)$ in direction $\alpha$.
\end{lemma}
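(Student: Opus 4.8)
I would argue by contradiction, playing off two competing facts: if a saddle connection $\sigma$ on $(S,X,q)$ in direction $\alpha$ existed, its length (suitably transported along $R$) would have to shrink to zero, whereas the recurrence of $R$ to a compact set in $D/G$ forces a uniform lower bound on the lengths of all saddle connections seen along $R$.

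To set this up, for $Y \in D$ write $q_Y$ for the unit-area flat structure associated with $Y$, and let $\sigma_Y$ be the saddle connection on the flat surface $(S,q_Y)$ obtained as the image of $\sigma$ under the affine deformation from the base structure to $q_Y$ (affine deformations carry saddle connections bijectively to saddle connections). First, by the discussion in \cref{sec:teichmueller-space}, the horocycles based at $\alpha \in \partial D$ are precisely the level sets of $Y \mapsto \ell_{q_Y}(\sigma_Y)$, and this quantity decreases toward $\alpha$; since $R$ ends at $\alpha$ and (being a geodesic ray) crosses every horocycle centered at $\alpha$, we get $\ell_{q_{R(t)}}(\sigma_{R(t)}) \to 0$ as $t \to \infty$. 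Second, by hypothesis there is a compact $K \subseteq D/G$ and a sequence $t_i \to \infty$ with $\pi(R(t_i)) \in K$. The plan is then to push this recurrence down to the moduli space: the inclusions $D \hookrightarrow \calT(S)$ and $G \hookrightarrow \MCG{S}$ induce $D/G \to \calT(S)/\MCG{S}$; letting $K'$ be the compact image of $K$, the unmarked Riemann surface underlying each $R(t_i)$ lies in $K'$. Since the bundle of unit-area quadratic differentials over $\calT(S)/\MCG{S}$ is proper over the base (its fibers, spheres of unit-area quadratic differentials, are compact), the unmarked flat surfaces $(S,q_{R(t_i)})$ all lie in a single compact family $\mathcal{C}$. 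Applying the compactness criterion for flat surfaces (the flat-surface analogue of Mumford's compactness criterion; cf. \cite{mt-rational-billiards}) gives $\varepsilon_0 > 0$ with no saddle connection shorter than $\varepsilon_0$ on any member of $\mathcal{C}$. For $i$ large, $\sigma_{R(t_i)}$ is a saddle connection on $(S,q_{R(t_i)}) \in \mathcal{C}$ of length $< \varepsilon_0$ — a contradiction — so no saddle connection in direction $\alpha$ exists.

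I expect the main obstacle to be the compactness input of the last paragraph: one needs that the unit-area quadratic differential bundle over moduli space is proper (so that a compact set of Riemann surfaces only supports a compact set of flat structures) and, more essentially, that a uniform lower bound on the shortest saddle connection holds over a compact family of flat surfaces. If I wished to avoid invoking Mahler/Mumford compactness for flat surfaces directly, I would instead reduce it to Mumford's criterion for $\calT(S)/\MCG{S}$ using the estimate $\operatorname{Ext}_Y(c) \ge \ell_{q_Y}(c)^2$ for unit-area $q_Y$, together with the standard fact that a flat surface carrying a short saddle connection also carries a short essential simple closed curve; a short curve forces the surface out of every compact set of moduli space, contradicting $R(t_i) \in K'$. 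A secondary point to be careful about is the justification that $\ell_{q_{R(t)}}(\sigma_{R(t)}) \to 0$: this uses that $R$ is a geodesic ray, hence exits every horoball based at $\alpha$ — for a general ray ending at $\alpha$ this could fail, so I would either restrict to geodesic rays (as is standard, e.g.\ in the definition of conical limit point) or note that the ray produced in the application can be taken geodesic.
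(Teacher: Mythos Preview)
Your proof is correct and follows the same contradiction strategy as the paper: a saddle connection in direction $\alpha$ would have length tending to zero along $R$, contradicting a uniform lower bound on saddle-connection lengths over the compact set to which $R$ recurs. The paper, however, obtains that lower bound more directly. Rather than pushing down to $\calT(S)/\MCG{S}$, invoking properness of the unit-area quadratic-differential bundle, and then a Mahler/Mumford-type criterion, the paper observes that $G$ acts on $D$ by change of marking, so $G$-equivalent points of $D$ carry \emph{isometric} unmarked flat structures; hence the length of the shortest saddle connection is already a continuous positive function on $D/G$ itself, and is bounded below on any compact subset. Your detour through moduli space is valid but unnecessary---the Veech group already identifies isometric flat surfaces, so no further quotient or properness argument is needed. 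Your secondary concern about $R$ being a geodesic ray is handled in the paper exactly as you suggest: one normalizes so that $\alpha$ is vertical and $R$ is the Teichm\"uller geodesic flow trajectory.
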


\begin{proof}
Note that for all $g \in G$ and $(S,X,q) \in D$, $g \cdot (S,X,q)$ is isometric to $(S,X,q)$---so points of $D/G$ are flat surfaces up to isometry. The length of the shortest saddle connection in direction $\alpha$ is a continuous function on $D / G$, so over any compact subset of $D/G$ there is a lower bound on the length of the shortest saddle connection in direction $\alpha$.

Suppose that there is a saddle connection on $(S,X,q)$ in direction $\alpha$. Without loss of generality assume that $\alpha$ is the vertical direction, so that $R$ is the image of the Teichm\"uller geodesic flow $g_t$. 
Then the length of the shortest saddle connection in the vertical direction tends to zero.
Therefore the image of $R$ in $D/G$ cannot return to a compact set of $D / G$ infinitely often.
\end{proof}

\begin{proof}[Proof of generalized Veech dichotomy]
Because $G$ is finitely generated, $\alpha$ is either a conical limit point or parabolic fixed point (\cref{thm:limit-pts-of-fin-gen-fuchsian-gps}). 
When $\alpha$ is a parabolic direction the foliation in direction $\alpha$ is completely periodic; see for example \cite[Section 6]{thurston-diffeomorphisms-of-surfaces}.

Suppose that $\alpha$ is conical, and let $R$ be a ray based at a point $X \in \hull{G}$ and ending at $\alpha$.
Then there exist a sequence $\set{g_i}_{i=1}^\infty$ of elements of $G$ and an $\epsilon > 0$ large enough that $\set{g_i X}_{i=1}^\infty$ converges to $\alpha$ within the $\epsilon$-neighborhood of $R$ in $D$. 
Then $R$ intersects the closed $\epsilon$-neighborhood of $X$ and its images under each of the isometries $g_i$ (i.e., the translates of the neighborhood around $X$), so $R$ must return to a compact set in $D /G$ infinitely often.
% See also \cite[Theorem 10.2.1(3)]{beardon-discrete-groups} and \cite[Theorem 12.3.3]{ratcliffe-foundations}}
By \cref{lem:return-to-compact-implies-no-saddle} there is no saddle connection in direction $\alpha$, so the foliation $\calF_\alpha$ is minimal (\cref{rmk:no-saddle-implies-minimal}). Finally, because $\calF_\alpha$ is minimal and $X$ is not divergent,
the contrapositive to Masur's criterion (\cref{thm:masurs-criterion}) implies that $\calF_\alpha$ must be uniquely ergodic.
\end{proof}

\begin{corollary}
\label{cor:to-generalized-veech-dichotomy}
For any saddle connection $\sigma$, the associated direction $\dir{\sigma} \in \partial D$ is either parabolic or lies outside of $\Lambda(G)$.
\end{corollary}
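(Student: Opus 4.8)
The plan is to read this off directly from the generalized Veech dichotomy (\cref{thm:generalized-veech-dichotomy}), using the observation recorded just after the definition of minimality: when $\calF_\alpha$ is minimal there are no saddle connections in direction $\alpha$. Fix a saddle connection $\sigma$ and write $\alpha = \dir{\sigma}$. If $\alpha \notin \Lambda(G)$ there is nothing to prove, so the only case to handle is $\alpha \in \Lambda(G)$, and the goal is to show $\alpha$ is then parabolic.

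Since $\sigma$ is a saddle connection in direction $\alpha$, it is (by convention) a leaf of $\calF_\alpha$, so $\calF_\alpha$ is not minimal. Now apply \cref{thm:generalized-veech-dichotomy} to the direction $\alpha \in \Lambda(G)$: it is either minimal and uniquely ergodic, or completely periodic and invariant by a parabolic element of the maximal Veech group. The first alternative is incompatible with $\calF_\alpha$ having a saddle connection as a leaf, so the second must hold; in particular $\alpha$ is a parabolic direction. Contrapositively, if $\alpha$ is not parabolic then $\alpha \notin \Lambda(G)$, which is the claim.

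If one prefers the stronger conclusion that $\alpha$ is fixed by a parabolic element of $G$ itself (not merely of the maximal Veech group), the cleanest route is to invoke the internal structure of the proof of \cref{thm:generalized-veech-dichotomy} rather than just its statement: because $G$ is finitely generated, \cref{thm:limit-pts-of-fin-gen-fuchsian-gps} says each point of $\Lambda(G)$ is either a parabolic fixed point of $G$ or a conical limit point of $G$, and that proof shows a conical limit point has minimal foliation. Since $\calF_\alpha$ is not minimal, $\alpha$ is not conical, hence a parabolic fixed point of $G$.

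I do not anticipate any genuine obstacle: the entire content is packaged in \cref{thm:generalized-veech-dichotomy}, and the only care needed is the logical bookkeeping of the two alternatives and fixing what ``parabolic'' should mean (for $G$ versus for the maximal Veech group) in the downstream applications in \cref{sec:hyperbolicity}.
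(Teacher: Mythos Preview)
Your proposal is correct and matches the paper's approach: the corollary is stated without proof in the paper, as an immediate consequence of \cref{thm:generalized-veech-dichotomy}, and your argument is exactly the contrapositive reading one would expect. Your second paragraph, noting the discrepancy between ``parabolic for the maximal Veech group'' in the theorem statement and ``parabolic for $G$'' as needed downstream, is a genuine observation; the resolution you give---that the proof of \cref{thm:generalized-veech-dichotomy} itself begins by splitting $\Lambda(G)$ into conical and parabolic points \emph{of $G$} via \cref{thm:limit-pts-of-fin-gen-fuchsian-gps}---is the right one and is implicit in the paper.
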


%%%
%%%
%%%
%%%
%%%
\section{Construction}
\label{sec:construction}

Let $S$ be a closed, connected, oriented surface of genus at least 2.
\nomenclature[S]{$S$}{closed, connected, oriented surface of genus at least $2$}
Fix a complex structure $X_0$ on $S$ and a flat metric $q_0$ for $X_0$ so that $(S,X_0,q_0)$ is a flat surface.
    \nomenclature[X0]{$X_0$}{fixed choice of complex structure on $S$}
    \nomenclature[q0]{$q_0$}{flat metric for the complex structure $X_0$}
Let $D$ be the Teichm\"uller disk of $q_0$, let $\rho$ be the Poincar\'e metric on $D$, 
    \nomenclature[rho]{$\rho$}{Poincar\'e metric on the Teichm\"uller disk $D$}
and let $G$ be a finitely generated Veech group of $q_0$. 
    \nomenclature[G]{$G$}{finitely generated Veech group of $q_0$}

See \cite{ddls-extensions} for the motivating construction in the case where $G$ is not only finitely generated but also a lattice.
% In \cite{ddls-extensions}, $E$ is a bundle over the entire Teichm\"uller disk $D$, which coincides with $\hull{G}$ when $G$ is a lattice. 
Their construction of analogous spaces $E$ and $\hat{E}$ differs from the construction given here. 
However, several of their arguments continue to apply in this more general setting essentially verbatim, and their work is cited wherever this is the case.

\subsection{Base spaces}

\subsubsection{Horoballs and horopoints}
\label{sec:horoballs-and-horopoints}

Denote by $\directions$ the set of all directions of all saddle connections on $(S,X_0,q_0)$. Consider $\directions$ to be a subset of $\partial D$ as described in \cref{sec:directions}.
    \nomenclature[D]{$\directions$}{set of all directions of all saddle connections on $(S,X_0,q_0)$, considered as a subset of $\partial D$}
For a saddle connection $\sigma$, denote its direction by $\dir{\sigma} \in \directions$. 
    \nomenclature[(sigma)]{$\dir{\sigma}$}{direction associated to the saddle connection $\sigma$}
The notation $\alpha \in \directions$ is sometimes used to refer to a direction without a specific choice of saddle connection.
    \nomenclature[alpha]{$\alpha$}{an element of $\directions$ (without specific reference to an associated saddle connection)}

Because $G$ is assumed to be finitely generated and not necessarily a lattice, the direction of any saddle connection is associated to either a parabolic limit point or to a point outside the limit set of $G$ (\cref{cor:to-generalized-veech-dichotomy}), which inspires the definitions below.

\begin{definition}
\label{def:horoballs}
Let $\sigma$ be a saddle connection with direction $\dir{\sigma} \in \directions$.
\begin{enumerate}[(i)]
    \item If $\dir{\sigma}$ is a parabolic limit point, then it is called a \termttt{parabolic direction} and $\sigma$ is called a \termttt{parabolic saddle connection}.
    For each such $\dir{\sigma}$, make a choice of closed horoball which is invariant by the maximal parabolic subgroup of $G$ corresponding to $\dir{\sigma}$.
    Choose the horoball to be small enough so that in any fiber over a point in its interior, the length of a saddle connection in direction $\dir{\sigma}$ is no more than one third the length of a saddle connection in any other direction. (This ensures that the horoballs are $1$-separated.)
    Also choose the horoball to be small enough that its distance to $\partial \hull{G}$ is at least one.
    Finally, choose the horoballs so that the set of all horoballs (for all parabolic directions) is $G$-invariant.
    For each parabolic $\dir{\sigma}$, fix this choice of horoball and call it the \termttt{horoball} for $\dir{\sigma}$, denoted $B_{\dir{\sigma}}$.
    \item If $\dir{\sigma}$ is not a parabolic limit point, then it is called a \termttt{nonparabolic direction} and $\sigma$ is called a \termttt{nonparabolic saddle connection}.
    For each such $\dir{\sigma}$, define the \termttt{horopoint} for $\dir{\sigma}$, denoted $B_{\dir{\sigma}}$, to be the $\rho$-closest orthogonal projection of $\dir{\sigma}$ to $\partial \hull{G}$---that is, the unique point $x \in \partial \hull{G}$ such that the (unique, hyperbolic) geodesic through $x$ and $\dir{\sigma}$ meets $\partial \hull{G}$ orthogonally. 
    Note that the set of all horopoints (for all nonparabolic directions) is necessarily $G$-invariant.
\end{enumerate}
    \nomenclature[B alpha]{$B_{\alpha}$}{if $\alpha$ is parabolic, closed horoball in $D$ that is invariant by the maximal parabolic subgroup of $G$ corresponding to $\alpha$; if $\alpha$ is nonparabolic, orthogonal projection of the point $\alpha \in \partial D$ onto $\hull{G}$}
See \cref{fig:horostuff-paths} (blue) for examples. 
The notation $B_*$ is used in both definitions because these objects function analogously later, when it is sometimes helpful to refer to them interchangeably. 
\end{definition}
The set of horopoints is not necessarily pairwise separated by a fixed constant, although by construction the horopoints are separated by a fixed constant from the set of horoballs associated to parabolic saddle connections. 
For each $\alpha \in \directions$ fix a point $X_{\alpha} \in \partial B_{\alpha}$; for nonparabolic directions the only possible choice is $X_{\alpha} = B_{\alpha}$, the horopoint for $\alpha$.
    \nomenclature[X alpha]{$X_{\alpha}$}{choice of fixed point in $\partial B_{\alpha}$; if $\alpha$ is nonparabolic, identical to $B_{\alpha}$}
% Note that this ensures a unique choice of point for each direction in the event that there are several consecutive parabolic saddle connections, which is important in the proof of the fan lemma.

\begin{figure}[H]
    \centering
    \includegraphics[width=\textwidth]{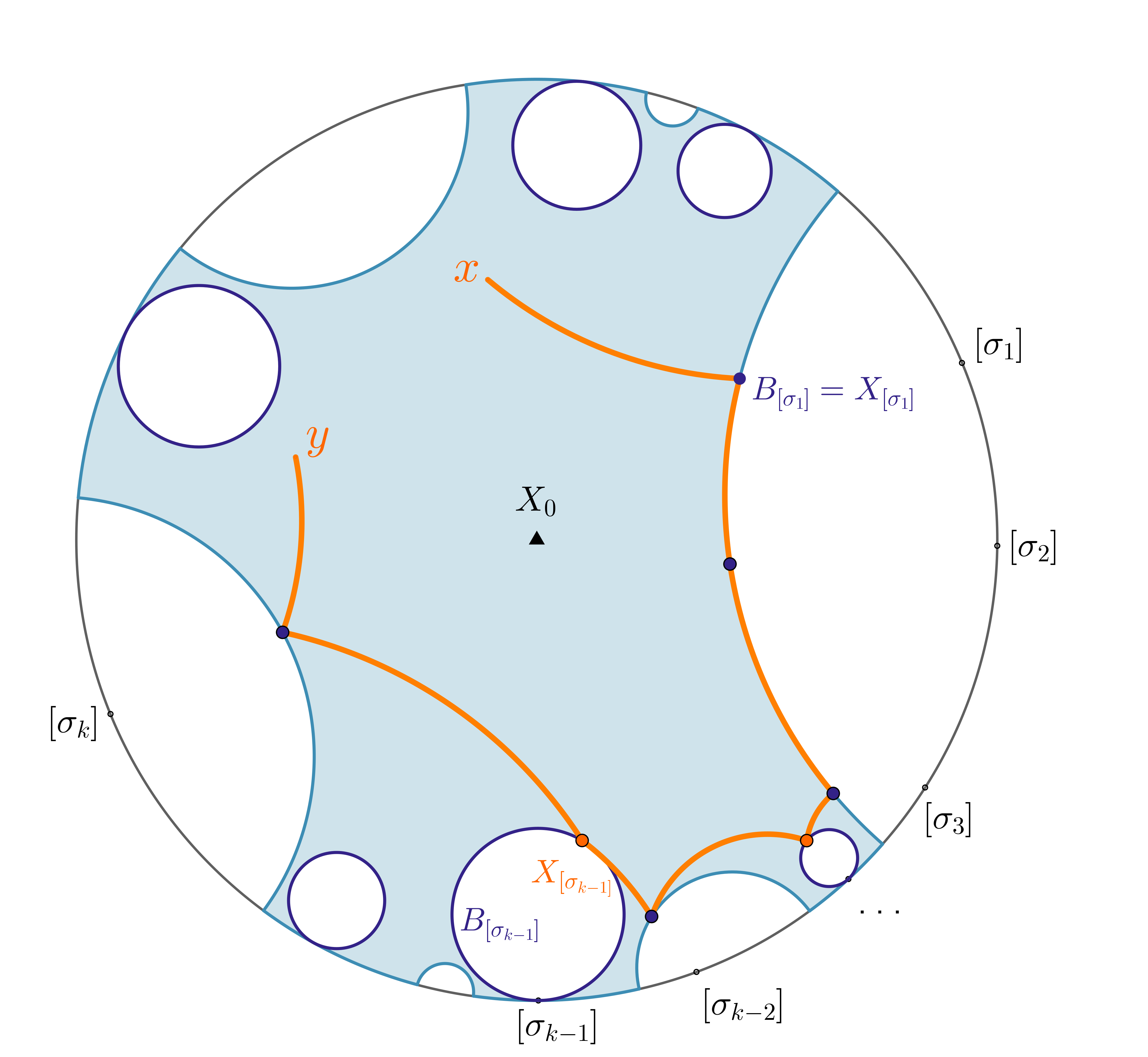}
    \caption{Examples of the constructions from \cref{def:horoballs} and \cref{def:preferred-paths}, projected to $D$. The region shaded in light blue, \emph{including} its boundary in $D$, is the truncated convex hull $\overline{D}$. 
    Some saddle connection directions are labeled along $\partial D$.
    For nonparabolic directions $\dir{\sigma_*}$---that is, those outside $\hull{G}$---the associated ``horopoint'' $B_{\dir{\sigma_*}}$ (dark blue dot) is the $\rho$-closest projection of $\dir{\sigma_*}$ onto $\partial \hull{G}$.
    For the remaining parabolic directions $\dir{\sigma_*}$, the associated ``horoball'' $B_{\dir{\sigma_*}}$ (dark blue circle and its interior) is a closed horoball in $\hull{G}$ based at $\dir{\sigma_*}$, with the choice of horoball made according to the technical requirements of \cref{def:horoballs}.
    Finally, for each $\sigma_*$ there is a choice of point $X_{\dir{\sigma_*}}$: When $\sigma_*$ is nonparabolic $X_{\dir{\sigma_*}} := B_{\dir{\sigma_*}}$ (dark blue dot), and when $\sigma_*$ is parabolic $X_{\dir{\sigma_*}} \in \partial B_{\dir{\sigma_*}}$ (orange dot).
    \\ \phantom{hello}
    To construct the preferred path from $x$ to $y$, $\varsigma(x,y)$, as in \cref{def:preferred-paths}, first observe that the geodesic between $f(x)$ and $f(y)$ in the fiber over $X_0$ (black triangle) consists of saddle connections $\sigma_1$ through $\sigma_k$, in order. The fact that their associated directions appear in a nice order along $\partial D$ is not a coincidence; see \cref{lem:structure-of-ideal-fans}. The preferred path is constructed from alternating concatenations of hyperbolic geodesics through the horizontal fibers of $\overline{E}$ (orange curves) with saddle connections traversed in the fibers over the chosen points $X_*$ (dark blue or orange dots). 
    Because this diagram shows only the projection of $\varsigma(x,y)$ to $D$, it is important to keep in mind that the horizontal pieces (orange curves) all belong to different horizontal fibers in $\overline{E}$ and that the saddle connections (dark blue or orange dots) in fact have positive length.
    }
    \label{fig:horostuff-paths}
\end{figure}

\subsubsection{Convex hull and truncated convex hull \texorpdfstring{$(\overline{D},\overline{\rho})$}{}}

Let $\hull{G}$ denote the convex hull of the limit set of $G$ as defined in \cref{sec:convex-hull-and-convex-core}.
    \nomenclature[hull(G)]{$\hull{G}$}{convex hull of the limit set of $G$}
Because $\hull{G}$ is a subset of $D$ and $(D,\rho)$ is a length space, the space $(\hull{G}, \rho\vert_{\hull{G}})$ is also a length space.

The \termttt{truncated convex hull} of $G$, denoted $\overline{D}$, is obtained from $\hull{G}$ by deleting the interiors of the horoballs $B_{\alpha}$. That is,
$$ \overline{D} := \hull{G} \backslash \bigcup_{\alpha \in \directions} B^\circ_{\alpha}, $$
where $B^\circ_{\alpha}$ denotes the interior of $B_{\alpha}$.
    \nomenclature[D bar]{$\overline{D}$}{truncated convex hull, obtained from $\hull{G}$ by deleting the interiors of the horoballs $B_\alpha$}
(Note that for nonparabolic $\alpha \in \directions$, $B_{\alpha}$ is a point and $B^\circ_{\alpha} = \emptyset$.)
By \cite[Theorem II.11.27]{bh-nonpositive-curvature}, the truncated disk $D \backslash \cup_{\alpha \in \directions} B^\circ_{\alpha}$ with the induced path metric is $\CAT{0}$.
Therefore the restriction to $\overline{D}$ with the associated restricted metric $\overline{\rho}$ is also a $\CAT{0}$ space.
    \nomenclature[rho bar]{$\overline{\rho}$}{induced path metric on the truncated convex hull $\overline{D}$}
Because $G$ is finitely generated, it acts cocompactly on $\overline{D}$ (refer to \cref{sec:convex-hull-and-convex-core}).

Denote by $\DtoDbar: D \rightarrow \overline{D}$ the $\rho$-closest-point projection of $D$ onto $\overline{D}$.
    \nomenclature[proj]{$\DtoDbar$}{$\rho$-closest-point projection of $D$ onto $\overline{D}$}

\subsubsection{Collapsed convex hull \texorpdfstring{$(\hat{D},\hat{\rho})$}{}}

Let $\hat{D}$ be the \termttt{collapsed convex hull}, the quotient space obtained from $\overline{D}$ by collapsing the boundary of each horoball $B_{\alpha}$ to a point.
    \nomenclature[D hat]{$\hat{D}$}{collapsed convex hull, quotient space obtained from $\overline{D}$ by collapsing each horoball $B_{\alpha}$ to a point}
The quotient pseudometric $\hat{\rho}$ is positive-definite, therefore a metric, and so the space $(\hat{D}, \hat{\rho})$ is a length space (\cref{lem:bh-length-space}).
    \nomenclature[rho hat]{$\hat{\rho}$}{quotient pseudometric on $\hat{D}$}
% Note that orthogonal projection to the hull is an equivalence relation.

Analogous to the map $\DbartoDhat : \overline{D} \rightarrow \hat{D}$ which collapses each horoball to a point, there is also a map $\hulltoDhat: \hull{G} \rightarrow \hat{D}$ which collapses the \emph{interior} of each horoball to a point.
    \nomenclature[p]{$\DbartoDhat$}{map $\overline{D} \rightarrow \hat{D}$ which collapses each $B_\alpha$ to a point}
    \nomenclature[p]{$\hulltoDhat$}{map $\hull{G} \rightarrow \hat{D}$ which collapses the interior of each $B_\alpha$ to a point}

\subsection{Bundles}

\subsubsection{Total space \texorpdfstring{$E$}{E}}
\label{sec:total-space-E}

Let $\pi: E \rightarrow \hull{G}$ be the pullback bundle of the Bers fibration (\cref{eqn:bers-fibration}) via the inclusion $\hull{G} \subset D \subset \calT(S)$, identifying $E \subset \calT(\Dot{S})$. 
    \nomenclature[pi]{$\pi:E \rightarrow \hull{G}$}{pullback bundle of the Bers fibration via the inclusion $\hull{G} \subset \calT(S)$}
    \nomenclature[E]{$E$}{surface bundle over $\hull{G}$ given by $\pi$; also admits a product structure $E \cong \hull{G} \times \Tilde{S}$}
Let $\Gamma < \MCG{\Dot{S}}$ be the $\pi_1 S$-extension of $G$ (that is, the group fitting into the Birman exact sequence (\cref{eq:birman-exact-sequence})), which acts on $E$.
    \nomenclature[Gamma]{$\Gamma$}{extension group of $G$}

The space $E$ is a surface bundle over $\hull{G}$: By construction the fiber over $X \in \hull{G}$, denoted $E_X := \pi^{-1}(X) $, 
    \nomenclature[EX]{$E_X$}{fiber of $E$ over $X$, i.e. $\pi^{-1}(X)$}
is canonically identified with $\Tilde{S}$ equipped with the pulled back complex structure $X$ and flat metric $q_X$.
    \nomenclature[qX]{$q_X$}{flat metric associated to $X$, i.e. terminal flat metric from the Teichm\"uller mapping $(S,X_0,q_0) \rightarrow (S,X,q_X)$}
Recall that there was a fixed choice of a complex structure $X_0$ (which is necessarily in $\hull{G}$), and denote the fiber over $X_0$ by $E_0$.
    \nomenclature[E0]{$E_0$}{fiber of $E$ over $X_0$}

It will be necessary to have maps between fibers. For $X,Y \in \hull{G}$,
let $f_{X,Y} : E_Y \rightarrow E_X $ be the lift of the Teichm\"uller map, i.e., the map that sends $y \in E_Y$ to the unique point $f(y) \in E_X$ along the lift of the geodesic in $D$ connecting $\pi(x)$ and $\pi(y)$.
    \nomenclature[fXY]{$f_{X,Y}:E_Y \rightarrow E_X$}{lift of the Teichm\"uller map to the universal cover}
This map is affine with respect to the flat metrics $q_X$ and $q_Y$ and is $e^{\rho(X,Y)}$-bilipschitz by construction.
For any $X,Y,Z \in \hull{G}$, the composition $f_{X,Y} \circ f_{Y,Z}$ agrees with $f_{X,Z}$.
For any $X \in \hull{G}$, define $f_X : E \rightarrow E_X $ by $f_X \vert_{E_Y} = f_{X,Y}$ for $Y \in \hull{G}$.
    \nomenclature[fX]{$f_X:E \rightarrow E_X$}{map determined by varying $f_{X,Y}$ over all $Y \in D$}
When $X = X_0$, the map is denoted $f = f_{X_0} : E \rightarrow E_0$.
    \nomenclature[f]{$f:E \rightarrow E_0$}{alternative notation for $f_{X_0}$}

In fact, $E$ is a product: It admits a product structure $E \cong \hull{G} \times \Tilde{S}$. The map $\pi$ is the projection onto the first factor $\hull{G}$. The projection onto $\Tilde{S}$, the universal cover of $S$, is any map $f_X$ for $X \in \hull{G}$.
    \nomenclature[S tilde]{$\tilde{S}$}{universal cover of $S$}
Define a metric $d$ on $E$ as the orthogonal direct sum of the Poincar\'e metric $\rho$ in each horizontal fiber $D_x$ and the pulled back flat metric $q_X$ in each fiber $E_X \cong \Tilde{S}$.
    \nomenclature[d]{$d$}{metric on $E$}

For any $X \in \hull{G}$, denote by $\Sigma_X \subset E_X$ the set of cone points of the flat structure $q_X$ on $E_X$, and define
$$ \Sigma = \bigcup_{X \in \hull{G}} \Sigma_X .$$
    \nomenclature[SigmaX]{$\Sigma_X$}{for any $X \in \hull{G}$, the set of cone points of the flat structure $q_X$ on $E_X$}
    \nomenclature[Sigma]{$\Sigma$}{the set of all cone points in $E$, i.e. $\cup_{X \in \hull{G}} \Sigma_X$}

The quotient $E/\Gamma$ is generally a noncompact $S$-bundle over the convex core $\hull{G}/G$.
If the quotient $\hull{G}/G$ is compact and hence $E/\Gamma$ is compact, then this represents a special case in which there are no parabolic directions for $G$---that is, the Veech group $G$ is convex cocompact and therefore the extension group $\Gamma$ is hyperbolic (see \cites{fm-convex-cocompactness}{hamenstaedt-hyperbolic-extensions} or \cref{thm:new-old}).
% See also \cite[Section 4.2]{katok-fuchsian-groups}.
In the general case, a $\Gamma$-equivariant quotient of $E$ is constructed from the $G$-equivariant quotient $\hull{G} \rightarrow \hat{D}$ as described in the following several subsections.

\subsubsection{Horizontal fibers \texorpdfstring{$D_x$}{}}
For any $X \in \hull{G}$ and $x \in E_X$, the horizontal fiber $D_x = f_X^{-1}(x)$ is the unique lift of $\hull{G}$ to the Teichm\"uller disk in $\calT(\dot{S})$ through $x$ that covers $\hull{G}$ via the projection $\pi$.
    \nomenclature[Dx]{$D_x$}{preimage $f_X^{-1}(x)$, i.e. the unique lift of $\hull{G}$ to the Teichm\"uller disk in $\calT(\Dot{S})$ through $x$ that covers $\hull{G}$ via the projection $\pi$}
\begin{remark}
A more appropriate choice of notation might be $\hull{G}_x$, to emphasize that this is a lift of only the convex hull rather than the entire Teichm\"uller disk $D$.
Because there will not be any objects lifted to the complement of the hull, the notation $D_x$ is used for simplicity.
\end{remark}

\subsubsection{Pullback bundle over truncated convex hull \texorpdfstring{$\overline{E}$}{}}
Define the pullback bundle over the truncated convex hull by
$$ \overline{E} = \pi^{-1} ( \overline{D} ) = E \backslash \bigcup_{\alpha \in \directions} \calB_{\alpha}^\circ,$$
where $\calB_{\alpha}^\circ = \pi^{-1}(B_{\alpha}^\circ)$ is the interior of the horoball preimage $\calB_{\alpha} = \pi^{-1}(B_{\alpha})$. 
    \nomenclature[B alpha circ]{$B^\circ_{\alpha}$}{interior of $B_{\alpha}$}
    \nomenclature[B alpha cal]{$\calB_{\alpha}$}{preimage $\pi^{-1}\left(B_{\alpha}\right)$}
    \nomenclature[B alpha cal circ]{$\calB^\circ_{\alpha}$}{interior of the horoball preimage $\calB_{\alpha}$}
(As before, if $\alpha$ is nonparabolic then $B_{\alpha}$ is a point and $B_{\alpha}^\circ = \emptyset$, so $\calB_{\alpha}^\circ = \emptyset$ also.)

The truncated bundle $\overline{E}$ is equipped with the length metric $\overline{d}$ induced from $(E,d)$. 
    \nomenclature[d bar]{$\overline{d}$}{metric on $\overline{E}$}
Because $G$ acts isometrically and cocompactly on $(\overline{D}, \overline{\rho})$ and $\pi_1(S)$ acts isometrically and cocompactly on $(\Tilde{S},\tilde{q_0})$, $\Gamma$ acts isometrically and cocompactly on $(\overline{E},\overline{d})$.
Therefore the space $(\overline{E}, \overline{d})$ is quasi-isometric to the group $\Gamma$ equipped with the word metric.
Then the quotient $\overline{E}/\Gamma$ is a compact $S$-bundle over $\overline{D}/G$,
$$ S \rightarrow \overline{E}/\Gamma \rightarrow \overline{D}/G. $$
As in \cite{ddls-extensions}, compactness of $\overline{E}/\Gamma$ gives that $\Sigma$ is $r$-dense in $\overline{E}$ for some $r > 0$. 
By an application of the Arzel\`a-Ascoli Theorem, $(\overline{E}, \overline{d})$ is a geodesic space.

\subsection{Electrified space \texorpdfstring{$\hat{E}$}{}}
% ``Electrified'' never defined (see Tang Section 4 for definition) though that's inconsequential for this paper.

\subsubsection{Bass-Serre trees}
\label{sec:bass-serre-trees}

Let $\alpha$ be a parabolic direction. 
The $\R$-tree dual to the foliation of $E_{X_{\alpha}}$ in the direction ${\alpha}$, denoted $T_\alpha$, is a weighted Bass-Serre tree when equipped with the metric defined by the transverse measure on the foliation of $E_{X_{\alpha}}$ in direction ${\alpha}$.
    \nomenclature[T alpha]{$T_\alpha$}{weighted Bass-Serre tree for a parabolic direction $\alpha$, i.e. the $\R$-tree dual to the foliation of $E_{X_{\alpha}}$ in the direction ${\alpha}$ }
Define a map 
$$t_{\alpha} : E \rightarrow T_{\alpha}$$ 
to be the composition of $f_{X_{\alpha}} : E \rightarrow E_{X_{\alpha}}$ followed by the projection $E_{X_{\alpha}} \rightarrow T_{\alpha} $.
    \nomenclature[t alpha]{$t_\alpha: E \rightarrow T_{\alpha}$}{composition of $f_{X_\alpha}$ followed by projection to $T_\alpha$}
Recall from \cref{sec:horoballs-and-horopoints} that there was a choice of fixed point $X_{\alpha} \in \partial B_{\alpha}$,
%  Also recall that $B_{\alpha}$ is invariant by the parabolic subgroup of $G$ corresponding to $\alpha$.
and note that the tree $T_{\alpha}$ is independent of the choice of $X_{\alpha} \in \partial B_{\alpha}$.
Therefore the map $t_{\alpha}$ is also independent of this choice.
Because there are only finitely many $\Gamma$-orbits of edges of these trees, every edge of every tree has length uniformly bounded above and below.

\subsubsection{Collapsed ``bundle'' \texorpdfstring{$\hat{E}$}{}} 
\label{sec:hatted-bundle}
Define $\hat{E}$ to be the quotient of $E$ obtained by collapsing each $\calB_{\alpha}$ onto $T_{\alpha}$ via the restriction $t_{\alpha} \vert_{\calB_{\alpha}}$, and denote this map by $P : E \rightarrow \hat{E}$.
    \nomenclature[P]{$P:E \rightarrow \hat{E}$}{quotient of $E$ obtained by collapsing each $\calB_{\alpha}$ onto $T_{\alpha}$ via the restriction $t_{\alpha} \vert_{\calB_{\alpha}}$}
    \nomenclature[E hat]{$\hat{E}$}{space obtained from $\overline{E}$ by collapsing horoballs to Bass-Serre trees via the map $P$}
Note that the quotient is $\Gamma$-equivariant because it is constructed from the $G$-equivariant quotient $\hull{G} \rightarrow \hat{D}$.
Finally, $\hat{E}$ is equipped with the quotient pseudometric $\hat{d}$ obtained from $\overline{d}$ by the restriction $ P \vert_{\overline{E}} : \overline{E} \rightarrow \hat{E}$.
    \nomenclature[d hat]{$\hat{d}$}{metric on $\hat{E}$}
As in \cite[Lemma 3.2]{ddls-extensions}, $\hat{d}$ is positive-definite, therefore a metric. 
Then by \cref{lem:bh-length-space}, the space $(\hat{E}, \hat{d})$ is a length space.  
The following fact is used throughout the paper.
\begin{lemma}[{\cite[Lemma 3.3]{ddls-extensions}}]
\label{lem:P-is-lipschitz}
The map $P$ is $1$-Lipschitz.
\end{lemma}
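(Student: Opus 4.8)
The plan is to show that the map $P : E \to \hat E$ is $1$-Lipschitz by first establishing that $P$ does not increase distances along paths, and then passing to the infimum. Recall that $\hat d$ is the quotient pseudometric on $\hat E$ obtained from $\overline{d}$ via $P|_{\overline{E}} : \overline{E} \to \hat E$, and that (by the preceding discussion and \cref{lem:bh-length-space}) $\hat d$ is an honest metric and $(\hat E, \hat d)$ is a length space. So it suffices to compare $\hat d$ with $\overline{d}$, and in turn $\overline{d}$ with $d$, since every point of $\hat E$ is the image of a point of $\overline{E} \subset E$.

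First I would record the easy inequality coming from the quotient construction itself: for any $x, y \in \overline{E}$, one has $\hat d(P(x), P(y)) \le \overline{d}(x,y)$, directly from the definition of the quotient pseudometric (a single-link chain $\{x, y\}$ is admissible, so the infimum over chains is at most $\overline{d}(x,y)$). The only subtlety is that $\hat d$ is defined on all of $\hat E$ while $P$ has domain $E$, not just $\overline{E}$; so the second step is to observe that $P(E) = P(\overline{E})$, because each $\calB_\alpha^\circ$ that was removed to form $\overline{E}$ is collapsed by $P$ into the tree $T_\alpha$, which is already the image of $\calB_\alpha \cap \overline{E} = \partial \calB_\alpha \cup (\text{nothing in the interior})$ — more precisely $t_\alpha|_{\calB_\alpha}$ maps $\calB_\alpha$ onto $T_\alpha$ and already maps $\partial \calB_\alpha$ onto all of $T_\alpha$, so collapsing the interior adds no new points to $\hat E$. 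Thus $P$ factors as $E \to \overline{E}' \to \hat E$ where the first map is the nearest-point retraction onto $\overline{E}$ (inside each horoball preimage this is the retraction along fibers of $t_\alpha$) and the second is $P|_{\overline{E}}$.

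The third step is then to check that the retraction $E \to \overline{E}$ appearing above is $1$-Lipschitz from $(E,d)$ to $(\overline{E}, \overline{d})$, or — cleaner — to argue directly at the level of paths: given any rectifiable path $c$ in $E$ from $x$ to $y$, its image $P \circ c$ is a path in $\hat E$ from $P(x)$ to $P(y)$ whose length is at most the length of $c$, because $P$ restricted to $\overline{E}$ is $1$-Lipschitz for $\overline{d}$ (again by the quotient pseudometric estimate applied along the partition points of $c$), and the portions of $c$ inside a horoball preimage $\calB_\alpha$ get mapped by $t_\alpha|_{\calB_\alpha}$, which is $1$-Lipschitz onto the $\R$-tree $T_\alpha$ since $T_\alpha$ is dual to a measured foliation of $E_{X_\alpha}$ and $f_{X_\alpha}$ is compatible with the relevant metrics on the horoball. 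Taking the infimum over all such paths $c$ and using that $(E,d)$ is a length space gives $\hat d(P(x),P(y)) \le d(x,y)$ for all $x,y \in E$, which is the claim.

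The step I expect to be the main obstacle is the third one: verifying that the collapsing map $t_\alpha|_{\calB_\alpha} : \calB_\alpha \to T_\alpha$ is genuinely distance-nonincreasing for the ambient metric $d$ (not just nonincreasing "transversally"). This requires unwinding that $d$ on $\calB_\alpha \cong B_\alpha \times \tilde S$ is the orthogonal sum of the Poincaré metric on the horizontal factor and the flat metric $q_{X_\alpha}$ (via the affine identifications $f_{X_\alpha}$) on the fiber, that $t_\alpha$ is constant on horizontal fibers $D_x$, and that the projection $E_{X_\alpha} \to T_\alpha$ to the $\R$-tree dual to the direction-$\alpha$ foliation is $1$-Lipschitz from $(E_{X_\alpha}, q_{X_\alpha})$ — which holds because the transverse measure of that foliation has, along any path, total variation at most the $q_{X_\alpha}$-length of the path. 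Once that is in place, everything else is a routine appeal to the quotient pseudometric definition and to \cref{lem:bh-length-space}; indeed this is exactly the content cited from \cite[Lemma 3.3]{ddls-extensions}, so the argument here is a verification that the same reasoning survives the replacement of the lattice hypothesis by finite generation, i.e. that the only change is restricting the base from $D$ to $\hull{G}$, which does not affect any of the fiberwise estimates.
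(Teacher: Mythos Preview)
The paper does not actually supply a proof of this lemma: it simply records the statement and cites \cite[Lemma~3.3]{ddls-extensions}. Your proposal is therefore a reconstruction of the cited argument rather than a comparison target, and as such it is essentially correct and follows the same line as the original DDLS proof: reduce to showing that $P$ does not increase the length of any rectifiable path in $E$, split the path into its pieces inside $\overline{E}$ and inside the horoball preimages $\calB_\alpha$, and handle the two kinds of pieces separately.

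One point deserves a sharper justification than you give. For the pieces inside $\calB_\alpha$, you need $t_\alpha\vert_{E_X}$ to be $1$-Lipschitz from $(E_X,q_X)$ to $T_\alpha$ for every $X\in B_\alpha$, not just for $X=X_\alpha$. Your phrase ``$f_{X_\alpha}$ is compatible with the relevant metrics on the horoball'' is where this hides. The clean reason is that horocycles based at $\alpha$ are level sets for the scaling of the transverse measure to $\calF_\alpha$: for $X\in B_\alpha$ the $q_X$-transverse measure dominates the $q_{X_\alpha}$-transverse measure (moving deeper into the horoball based at $\alpha$ stretches the direction orthogonal to $\alpha$), so the composition $E_X\xrightarrow{f_{X_\alpha,X}}E_{X_\alpha}\to T_\alpha$ contracts. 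Combined with the fact that $t_\alpha$ is constant on horizontal fibers $D_x$ and that $d$ is an orthogonal sum, this gives $1$-Lipschitz on $\calB_\alpha$. With that clarification your path-length argument goes through, and taking the infimum over paths (using that $(E,d)$ is a length space) yields $\hat d(P(x),P(y))\le d(x,y)$. None of this uses the lattice hypothesis, confirming your closing remark that the passage to finitely generated $G$ does not affect the argument.
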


\section{Guessing Geodesics}
\label{sec:hyperbolicity}

Hyperbolicity of $\hat{E}$ will be proven using the ``guessing geodesics'' criterion of Bowditch \cite[Proposition 3.1]{bowditch:uniform-hyperbolicity} and Masur-Schleimer \cite[Theorem 3.15]{ms:geometry-of-disk-complex}, as formulated by \cite{ddls-extensions}. 

\begin{proposition}[\toolttt{Guessing geodesics}; {\cite[Proposition 2.2]{ddls-extensions}}]
\label{lem:guessing-geodesics}
Suppose $\Omega$ is a length space, $\Upsilon \subset \Omega$ an $R$-dense subset for some $R > 0$, and $\delta \geq 0$ a constant such that for all pairs $x,y \in \Upsilon$ there are rectifiably path-connected sets $L(x,y) \subset \Omega$ containing $x,y$ satisfying the properties:
\begin{enumerate}[(1)]
    \item the $L(x,y)$ form $\delta$-slim triangles, and
    \item if $x,y \in \Upsilon$ have $\hat{d}(x,y) \leq 3 R$, then the diameter of $L(x,y)$ is at most $\delta$.
\end{enumerate}
Then $\Omega$ is hyperbolic.
\end{proposition}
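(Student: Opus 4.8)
The plan is to deduce Gromov hyperbolicity of $\Omega$ from the thin-triangles characterization of \cref{sec:gromov-hyperbolicity}, after replacing the family of connecting paths $L(x,y)$ by a system of paths that genuinely fellow-travel geodesics. Since hyperbolicity of length spaces is a quasi-isometry invariant, I would first pass to a combinatorial model: let $\mathcal{G}$ be the graph with vertex set $\Upsilon$ and an edge joining any two vertices whose distance in $\Omega$ is at most $3R$. Because $\Upsilon$ is $R$-dense and $\Omega$ is a length space, the inclusion $\Upsilon \hookrightarrow \Omega$ is a quasi-isometry, so it suffices to show $\mathcal{G}$ is hyperbolic; all constants below are permitted to depend only on $\delta$ and $R$.

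Working in $\mathcal{G}$, I would discretize each $L(x,y)$ into an edge-path $\gamma(x,y)$ (sampling $L(x,y)$ at arc-length spacing at most $R$ and snapping each sample to a nearby vertex) and extend $\gamma$ to arbitrary points of $\mathcal{G}$ by prepending and appending short edge-paths to nearby vertices --- this last step is needed because the argument below bisects geodesics of $\mathcal{G}$, whose midpoints may be interior to edges. It is routine to check that conditions (1) and (2) pass to the family $\gamma$ with comparable constants: the $\gamma$-triangles are slim, and $\gamma(x,y)$ has uniformly bounded diameter whenever $x$ and $y$ are within a bounded distance.

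The core of the argument --- and the step I expect to be the main obstacle --- is to show that $\gamma(x,y)$ is uniformly Hausdorff-close to a geodesic $[x,y]$ of $\mathcal{G}$; once this is in hand, the slim-triangle property transfers from the $\gamma$'s to genuine geodesic triangles, so $\mathcal{G}$ is hyperbolic. A priori $\gamma(x,y)$ could wander arbitrarily far from $[x,y]$, so the task is to extract a \emph{self-improving}, and crucially \emph{uniform}, bound from the two hypotheses alone. I would run the bisection argument of Bowditch and Masur--Schleimer: let $h(t)$ be the supremum of $d(z,\gamma(x,y))$ over all vertices $x,y$ with $d(x,y) \le t$ and all $z \in [x,y]$, cut $[x,y]$ at a midpoint $m$, and compare $\gamma(x,y)$ against $\gamma(x,m) \cup \gamma(m,y)$ using the thinness of the $\gamma$-triangle on $(x,m,y)$, together with condition (2) to control the behaviour near the three vertices. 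A naive iteration of such a comparison only bounds $h(t)$ logarithmically in $t$; the real work, and the technical heart of Bowditch's proof, is the more careful descent that upgrades this to a bound on $h$ independent of scale.

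The remaining issues are bookkeeping. Every comparison has to be routed through $\gamma$-triangles, since hypotheses (1)--(2) say nothing directly about geodesics; and the fact that $\Omega$ is only a length space, not geodesic, is cleanest to absorb by committing to the graph model $\mathcal{G}$ from the start, so that ``geodesic'' always means ``geodesic in $\mathcal{G}$''. Since this is exactly the content of \cite[Proposition 2.2]{ddls-extensions}, itself a repackaging of Bowditch and Masur--Schleimer, in the write-up I would simply cite that statement; the sketch above is the route by which I would reconstruct its proof.
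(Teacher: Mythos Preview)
The paper does not prove this proposition at all: it is stated as a tool, attributed to Bowditch \cite{bowditch:uniform-hyperbolicity} and Masur--Schleimer \cite{ms:geometry-of-disk-complex} in the formulation of \cite[Proposition 2.2]{ddls-extensions}, and then simply applied. Your sketch is a reasonable outline of the standard bisection argument behind those references, and you yourself note that in a write-up you would just cite the result; that is exactly what the paper does.
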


Saying the $L(x,y)$ form $\delta$-slim triangles means that for all $x,y,z \in \Upsilon$,
$$ L(x,y) \subset N_\delta \left( L(x,z) \cup L(z,y) \right), $$
where $N_\delta$ denotes the $\delta$-neighborhood.
    \nomenclature[N delta]{$N_\delta()$}{$\delta$-neighborhood}
This is similar to the usual slim triangles condition for hyperbolicity from \cref{sec:gromov-hyperbolicity}, except that the $L(x,y)$ need not be geodesics.

This section will verify most of the hypotheses required to apply the guessing geodesics criterion.
% Note that the fact that $(\hat{E},\hat{d})$ is a length space was proven earlier.
Many of the arguments made by \cite{ddls-extensions} continue to hold in this case.
However, the proof that the sets $L(x,y)$ (constructed in \cref{sec:preferred-paths}) form slim triangles differs considerably and composes the entirety of \cref{sec:slimness}.

\subsection{Preferred paths and collapsed preferred paths}
\label{sec:preferred-paths}

The sets $L(u,v)$ in the guessing geodesics criterion will be constructed from the following preferred paths in $E$ which connect cone points by concatenating alternating geodesics in the horizontal and vertical fibers.
\begin{definition}
\label{def:preferred-paths}
For two cone points $x,y \in \Sigma$ the \termttt{preferred path} from $x$ to $y$, denoted $\varsigma(x,y)$, is constructed as follows. 
    \nomenclature[sigma(x,y)]{$\varsigma(x,y)$}{preferred path between $x,y \in \Sigma$}

\begin{enumerate}
    \item In $E_0$, $f(x)$ and $f(y)$ are cone points with respect to the flat metric $q_0$, and they are connected by a geodesic segment which is a concatenation of saddle connections in $E_0$. That is,
    $$ [f(x),f(y)] = \sigma_1 \sigma_2 \cdots \sigma_k. $$
    Each saddle connection $\sigma_i$ is considered to be oriented with initial point $\sigma_i^- := \sigma_i \cap \sigma_{i-1}$ for $i=2,\dots,k$ (and $\sigma_1^- = f(x)$) and terminal point $\sigma_i^+ := \sigma_i \cap \sigma_{i+1}$ for $i=1, \dots, k-1$ (and $\sigma_k^+ = f(y)$). In the preferred path, each saddle connection $\sigma_i$ will be traversed in the fiber over the associated point $X_{\dir{\sigma_i}} \in B_{\dir{\sigma_i}}$ as fixed in \cref{sec:horoballs-and-horopoints}. 
    More precisely, define the $i$th \termttt{saddle piece} to be
    $$ \gamma_i := f_{X_{\dir{\sigma_i}}}(\sigma_i), $$
    which is a segment of the preferred path for all $1 \leq i \leq k$. 
        \nomenclature[gamma i]{$\gamma_i$}{$i$th saddle piece of $\varsigma(x,y)$}
    Each $\gamma_i$ is considered to be oriented with initial point
    $$ \gamma_i^- 
        := f_{X_{\dir{\sigma_i}}}\left( \sigma_i^- \right) $$
    and terminal point 
    $$ \gamma_i^+ 
        := f_{X_{\dir{\sigma_i}}}\left( \sigma_i^+ \right).  $$
    \item The \termttt{horizontal pieces} $h_i$ are chosen to make the preferred path continuous. 
        \nomenclature[h i]{$h_i$}{$i$th horizontal piece of $\varsigma(x,y)$}
    Specifically, for $1 \leq i \leq k-1$, $h_i$ is the geodesic in $D_{\gamma_i^+} = D_{\gamma_{i+1}^-}$ which connects $\gamma_i^+$ to $\gamma_{i+1}^-$. The first horizontal piece, denoted $h_0$, is the geodesic in $D_x = D_{\gamma_1^-}$ connecting $x$ to $\gamma_1^-$. The last horizontal piece, denoted $h_k$, is the geodesic in $D_{\gamma_k^+} = D_y$ connecting $\gamma_k^+$ to $y$.
    \item Define 
    $$ \varsigma(x,y) := h_0 \gamma_1 h_1 \gamma_2 h_2 \cdots \gamma_k h_k. $$
\end{enumerate}
See \cref{fig:horostuff-paths} for an example.
The \termttt{collapsed preferred paths}, denoted $\hat{\varsigma}(x,y)$, are the images of the preferred paths $\varsigma(x,y)$ under the map $P: E \rightarrow \hat{E}$; see \cref{sec:hatted-bundle}.
    \nomenclature[sigma(x,y) hat]{$\hat{\varsigma}(x,y)$}{collapsed preferred path between $x,y \in \Sigma$} 
\end{definition}

\subsection{The sets \texorpdfstring{$L(u,v)$}{L(u,v)}}
\label{sec:sets-l-u-v}

Let $\calV$ be the set of all vertices of all Bass-Serre trees in $\hat{E}$.
    \nomenclature[V]{$\calV$}{set of all vertices of all Bass-Serre trees $T_\alpha$}
Given $X \in \hull{G}$ and a saddle connection $\sigma$, the union of the saddle connections in $E_X$ with direction $\dir{\sigma}$ is precisely the preimage of the vertices of the Bass-Serre tree $T_{\dir{\sigma}}$ under the map
\begin{equation}
\label{eq:fibers-to-trees}
    E_X \xrightarrow{f_{X_{\dir{\sigma}}, X}} E_{X_{\dir{\sigma}}} \xrightarrow{P} T_{\dir{\sigma}},
\end{equation}
which is simply the restriction of $t_{\dir{\sigma}}^{-1}$ (see \cref{sec:bass-serre-trees}) to the fiber $E_X$.
For any vertex $v \in \calV$ and $X \in \hull{G}$, the preimage of $v$ under this composition (\ref{eq:fibers-to-trees}) is called the \termttt{$v$-spine in $E_X$} and is denoted $\theta^v_X$.
    \nomenclature[theta v X]{$\theta^v_X$}{$v$-spine in $E_X$}
Note that a choice of $v \in \calV$ determines the Bass-Serre tree $T_{\dir{\sigma}}$ to which $v$ belongs,
and denote the union of the $v$-spines over all $X \in \partial B_{\dir{\sigma}}$ by $\theta^v$.
    \nomenclature[theta v]{$\theta^v$}{union of $v$-spines over all fibers in the horoball associated to $v$}

Given $u,v \in \calV$, define
$$ L(u,v) = \bigcup \hat{\varsigma}(x,y), $$
where the union is taken over all $x \in \theta^u \cap \Sigma$ and $y \in \theta^v \cap \Sigma$.
    \nomenclature[L(u,v)]{$L(u,v)$}{union of $\hat{\varsigma}(x,y)$ over all $x \in \theta^u \cap \Sigma$ and $y \in \theta^v \cap \Sigma$}
Since $\varsigma(x,y)$ is a finite length path (by construction, each saddle piece and horizontal piece has finite length), $P$ is $1$-Lipschitz (\cref{lem:P-is-lipschitz}), and all $\hat{\varsigma}(x,y)$ in $L(u,v)$ connect $u$ to $v$, it follows that the set $L(u,v)$ is a path connected, rectifiable set containing both $u$ and $v$.

\subsection{Hyperbolicity}

The statement of the following theorem is identical to \cite[Theorem 4.2]{ddls-extensions}, but the proof depends on results which require considerable reformulation in the case that $G$ is only finitely generated. 
\begin{restatable*}[\theoremttt{Collapsed preferred paths form slim triangles}]{theorem}{collapsedpreferredpathsformslimtriangles}
    \label{thm:collapsed-preferred-paths-form-slim-triangles}
    There exists $\delta > 0$ so that collapsed preferred paths form $\delta$-slim triangles. That is, for any $x,y,z \in \Sigma$,
        $$ \hat{\varsigma}(x,y) \subset N_\delta \left( \hat{\varsigma}(x,z) \cup \hat{\varsigma}(y,z) \right). $$
    % From the proof, $\delta = 3 \delta'' = 3(2 \delta' + 2)$.
\end{restatable*}
\noindent As the numbering suggests, the entirety of \cref{sec:slimness} is dedicated to the proof of this theorem. For now, \cref{thm:collapsed-preferred-paths-form-slim-triangles} is assumed and used to prove hyperbolicity of $\hat{E}$ in \cref{thm:hat-e-is-hyperbolic}.
Besides this, the last result needed is \cref{cor:g-g-pt-2}, which is stated after a few more constructions below.

\cref{def:combinatorial-path}, \cref{lem:combinatorial-criterion}, and \cref{lem:combinatorial-paths-bounded-length} do not reappear in this paper until \cref{sec:hhs}; since the proofs are purely technical rather than geometrically intuitive, the reader may want to skip these on a first pass.

\begin{definition}
\label{def:combinatorial-path}
A \termttt{horizontal jump} in $\hat{E}$ is the image under $\overline{P}$ of a geodesic in $\overline{D}_z$, for some $z \in \Sigma $, that connects two components of $\partial \overline{D}_z$ and whose interior is disjoint from $\partial \overline{D}_z$. 
A \termttt{combinatorial path} in $\hat{E}$ is a concatenation of horizontal jumps and nonparabolic saddle connections; in particular, it is a concatenation of the collapsed preferred paths from \cref{def:preferred-paths}.
\end{definition}
\noindent While this paper leaves the definition of a horizontal jump unchanged from {\cite[Definition 3.7]{ddls-extensions}} (although the construction of $\overline{D}_z$ differs; see \cref{sec:construction}), the definition of a combinatorial path has been changed to fit the next statement, which is adapted from {\cite[Lemma 3.8]{ddls-extensions}}; their argument is sketched here and supplemented as necessary. 
\begin{lemma}
\label{lem:combinatorial-criterion}
There is a constant $C > 0$ such that any pair of points $x,y \in \calV$ may be connected by a combinatorial path of length at most $C \hat{d}(x,y)$.
In particular, this combinatorial path consists of at most $2 C \hat{d}(x,y)$ horizontal jumps and saddle connections.
\end{lemma}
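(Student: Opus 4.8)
The plan is to reproduce and fill in the sketch of \cite[Lemma 3.8]{ddls-extensions}. Since $\Gamma$ acts isometrically and cocompactly on both $\overline{E}$ and $\hat{E}$, and $\Sigma$ is $r$-dense in $\overline{E}$ (so that $\overline{P}(\Sigma)$ is $r$-dense in $\hat{E}$), it suffices to produce, for any $x,y\in\calV$, \emph{some} combinatorial path from $x$ to $y$ whose length is at most a fixed linear function of $\hat{d}(x,y)$; the bound on the number of pieces will then follow from a uniform lower bound on the length of each piece. Since $\hat E$ is a proper length space it is geodesic, so I may take a geodesic $\hat{\eta}$ of $\hat{E}$ from $x$ to $y$ and approximate it, piece by piece, by horizontal jumps and nonparabolic saddle connections.

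The construction proceeds as follows. Cut $\hat{\eta}$ into maximal subarcs lying in the ``thick part'' $\hat{E}\setminus\bigcup_\alpha T_\alpha$ and maximal subarcs lying in single Bass--Serre trees $T_\alpha$. Since $\overline{P}$ restricts to a homeomorphism from the thick part of $\overline{E}$ onto the thick part of $\hat{E}$, a thick-part subarc of $\hat\eta$ (being a geodesic subarc) lifts to a path of comparable length in $\overline{E}$; subdividing it at unit length and using $r$-density, I join consecutive points by cone points $p_i,p_{i+1}\in\Sigma$ with $\overline{d}(p_i,p_{i+1})\le 2r+1$. Because $\Gamma$ acts cocompactly and properly discontinuously on $\overline{E}$, there are finitely many $\Gamma$-orbits of such bounded-distance pairs; for each, the segment $[f(p_i),f(p_{i+1})]$ in $E_0$ has bounded $q_0$-length and hence is a concatenation of boundedly many saddle connections drawn from a finite set of directions, so the preferred path $\varsigma(p_i,p_{i+1})$ has uniformly bounded length and uniformly bounded combinatorial complexity, and $\hat{\varsigma}(p_i,p_{i+1})$ is a combinatorial path. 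A subarc of $\hat\eta$ running along a tree $T_\alpha$ from an ``entry'' vertex $u$ to an ``exit'' vertex $u'$ is replaced instead by the $T_\alpha$-geodesic $[u,u']_{T_\alpha}$, traversed edge by edge: every edge of every Bass--Serre tree has uniformly bounded length (\cref{sec:bass-serre-trees}), and an edge, being dual to a cylinder, is realized in $\hat{E}$ by a bounded-length combinatorial path between suitable cone points on the spines over $X_\alpha$ bounding that cylinder (there are finitely many $\Gamma$-orbits of cylinders, and $\partial B_\alpha$ has compact quotient by the maximal parabolic subgroup of $G$ fixing $\alpha$). Choosing the cone points at each thick/tree transition to lie on the relevant spine, so that the $\overline{P}$-images match, and concatenating everything, yields a combinatorial path $c$ from $x$ to $y$.

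For the estimates: each thick-part subarc contributes a number of bounded-length pieces proportional to its length; a tree subarc of length $\ell$ is crossed in tree-distance at most a bounded multiple of $\ell+1$ — here one uses that the Bass--Serre trees are uniformly quasi-isometrically embedded in $\hat{E}$ — and hence contributes boundedly many bounded-length pieces per unit length. So $\mathrm{length}(c)$ and the number of pieces of $c$ are each at most $C_0(\hat{d}(x,y)+1)$ for a uniform $C_0$. To pass to the bound $2C\hat{d}(x,y)$ in the piece count, observe that every horizontal jump in $c$ has length at least $1$ (it joins two $1$-separated horoball boundaries, cf.\ \cref{def:horoballs}, and its interior, being disjoint from $\partial\overline{D}_z$, is undistorted by $\overline{P}$), and every nonparabolic saddle-connection piece has length at least some $\epsilon_1>0$: the length of the shortest saddle connection is a positive, continuous, $G$-invariant function on $\overline{D}$, hence bounded below on the compact quotient $\overline{D}/G$, and in a collapsed preferred path such a piece is traversed in the fiber over a horopoint, which by construction lies at $\overline{\rho}$-distance at least $1$ from every horoball and is therefore uncollapsed by $\overline{P}$. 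Dividing $\mathrm{length}(c)$ by $\min\{1,\epsilon_1\}$, enlarging the constant, and disposing of the case where $\hat{d}(x,y)$ is below a fixed threshold (distinct vertices of $\calV$ being separated in $\hat{E}$ by a uniform $\delta_0>0$, again from the $1$-separation of horoballs and the uniform lower bound on edge lengths) gives the claimed bound.

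The main obstacle is the treatment of the collapsed horoball regions. One must verify that each Bass--Serre tree $T_\alpha$ sits uniformly undistorted in $\hat{E}$, so that a subarc of the geodesic $\hat{\eta}$ running inside $T_\alpha$ genuinely corresponds to a short path rather than a concealed long excursion, and that each tree edge admits a combinatorial realization of uniformly bounded length; both reduce to compactness of $\overline{D}/G$ together with the finiteness of $\Gamma$-orbits of cylinders and of cone points on the spines over each horocycle, but assembling these uniform bounds and ensuring the transitions between thick-part surgery and tree traversal are compatible is where the work lies. Away from this point the argument is essentially that of \cite[Lemma 3.8]{ddls-extensions}.
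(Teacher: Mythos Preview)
Your approach is structurally different from the paper's and contains a genuine gap. You decompose a putative $\hat{E}$-geodesic into thick-part and tree subarcs and estimate each separately; the paper instead first proves the edge-by-edge tree estimate (its Claim: adjacent tree vertices are joined by a combinatorial path of length $\le R'\ell_\alpha(v_1,v_2)$), then invokes \cite[Claim 3.10]{ddls-extensions} to obtain an \emph{exponential} bound $4R'e^{Kr}r$ valid for any $x,y\in\calV$ with $r=\hat d(x,y)$, and finally passes to a linear bound by subdividing a path of length $\le 2r$ using the $R_0$-density of $\calV$ in $\hat E$ and applying the exponential estimate on each short segment. No $\hat E$-geodesic is ever lifted.

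The essential gap is your assumption that the Bass--Serre trees are uniformly quasi-isometrically embedded in $\hat E$. Your tree-subarc estimate needs $\ell_\alpha(u,u')\le C\,\hat d(u,u')$ for the entry and exit vertices, but nothing available at this point gives that: $t_\alpha$ does not descend to a Lipschitz map $\hat E\to T_\alpha$ (it is not constant on the collapsed spines $\theta^w\subset\partial\calB_\beta$ for $\beta\neq\alpha$), and no Lipschitz retraction onto $T_\alpha$ has been constructed. The paper's Claim bounds combinatorial length in terms of $\ell_\alpha$, not $\hat d$, and the conversion to a $\hat d$-bound is precisely what costs the exponential factor in \cite[Claim 3.10]{ddls-extensions}; the subdivision trick then removes it. Without the QI-embedding, a $\hat d$-short excursion into $T_\alpha$ could in principle cross arbitrarily many edges, and your piece count would fail.

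There are also smaller issues that are probably patchable: $\hat E$ is only shown to be a length space (the $\Gamma$-action has infinite point stabilizers, so properness is not automatic, and your appeal to geodesics is unjustified---though near-geodesic paths of length $\le 2r$ would suffice); and the assertion that thick-part subarcs lift with comparable length is delicate near the horoball boundaries, where the quotient pseudometric can shortcut through collapsed spines. But the QI-embedding of the trees is the load-bearing unproved claim.
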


\begin{proof}[Proof sketch]
Denote ${\Sigma}_\alpha := \Sigma \cap \partial \calB_\alpha$.
There exists some $K \geq 30$ so that for each $\alpha \in \directions$ and $z \in \partial \calB_\alpha$ there is $w \in {\Sigma}_\alpha$ with $\overline{d}(z,w) \leq K/30$ (in particular, $K$ is chosen to be $30M$, where $M$ is from \cite[Lemma 3.4(3)]{ddls-extensions}).
The next step in the argument is to describe how to traverse between points of $\overline{P}({\Sigma}_\alpha)$.
Recall from \cref{sec:bass-serre-trees} that $T_\alpha$ is the $\R$-tree dual to the foliation of $E_{X_\alpha}$, and denote the length metric on $T_\alpha$ by $\ell_\alpha$.
    \nomenclature[l alpha]{$\ell_\alpha$}{length metric on $T_\alpha$}
The statement of the following claim is identical to {\cite[Claim 3.9]{ddls-extensions}}; this is the only part of the argument for which the proof does not extend verbatim to the non-lattice case.

\begin{claim}
\label{lem:combinatorial-paths-bounded-length}
There exists $R'>0$ such that any pair of points $v_1, v_2 \in \overline{P}({\Sigma}_\alpha) \subset \calV$ may be connected by a combinatorial path of length at most $R' \ell_\alpha(v_1,v_2)$.
\end{claim}
\begin{proof}
It suffices to assume that $v_1$ and $v_2$ are adjacent vertices of $T_\alpha$.
Choosing any $X \in \partial B_\alpha$, the preimages of $v_1$ and $v_2$ under the restriction $\overline{P}\vert_{E_X}: E_X \rightarrow T_\alpha$ are adjacent spines $\theta_X^{v_1}$ and $\theta_X^{v_2}$ which are separated by a strip of uniformly bounded width (\cite[Lemma 3.4]{ddls-extensions}).
Then there exists a saddle connection $\sigma \subset E_X$ of bounded length which joins cone points $y_1 \in \theta_X^{v_1}$ and $y_2 \in \theta_X^{v_2}$. Denote the direction of $\sigma$ by $\dir{\sigma} \in \directions$; unlike the case of \cite{ddls-extensions}, this direction may be nonparabolic.

Because $\sigma$ has bounded length, $X \in \partial B_\alpha$ lies within a bounded distance of $Y \in \partial B_{\dir{\sigma}}$.
Let $z_i := f_{Y,X}(y_i)$. 
Let $h_i$ be the horizontal geodesic in $D_{y_i}$ from $y_i$ to $z_i$, and note that the $\overline{P}$-image of each component of $h_i \cap \overline{D}_{y_i}$ is a horizontal jump in $\hat{E}$. 
In particular, each $h_i \cap \overline{D}_{y_i}$ must have total length at most $\rho(X,Y)$, which is uniformly bounded.
The saddle connection $f_{Y,X}(\sigma) \subset E_Y $ has length bounded by $e^{\rho(X,Y)}$ times the length of $\sigma$, which are both bounded.
Therefore following the horizontal jumps along $h_1$, traversing $f_{Y,X}(\sigma)$, and then following the horizontal jumps along $h_2$ gives a bounded length combinatorial path in $\hat{E}$ from $v_1$ to $v_2$.
The claim follows from the fact that an edge in $T_\alpha$ must be at least some minimal length, so $\ell_\alpha(v_1,v_2)$ is uniformly bounded below.
\end{proof}

Note that there exists a constant $R_0$ so that $\calV$ is $R_0$-dense in $\hat{E}$ because $\calV$ is $\Gamma$-invariant and $\hat{E}/\Gamma$ is compact (since $\Gamma$ acts cocompactly on $\overline{E}$ and $\hat{E} / \Gamma$ is the continuous image of $\overline{E}/ \Gamma$ under the descent of $P:\overline{E} \rightarrow \hat{E}$; \cite[Lemma 3.6]{ddls-extensions}).

Now it can be shown that if $x,y \in \calV$ and $r = \hat{d}(x,y) >0$, then $x$ and $y$ can be connected by a combinatorial path of length $\leq 4 R' e^{Kr} r$ by \cite[Claim 3.10]{ddls-extensions}.
Set $C := 36 R' e^{3KR_0}$.
Then, if $r := \hat{d}(x,y) \leq 3R_0$, there is a combinatorial path joining $x$ and $y$ of length at most
$$4 R' e^{3KR_0} \hat{d}(x,y) = \frac{C}{9} \hat{d}(x,y),$$ 
satisfying the lemma.
Otherwise if $r > 3R_0$, $x$ and $y$ can be joined by a path $\gamma$ of length at most $2r$, which can be subdivided into $n = \lceil \mathrm{length}(\gamma) / R_0 \rceil$ equal-length subsegments of length at most $R_0$.
Since $\calV$ is $R_0$-dense, there is a sequence $\set{x_i} \subset \calV$ with $x_0=x$, $x_n=y$, and $\hat{d}(x_i, x_{i+1})$. Each pair $x_i$, $x_{i+1}$ can then be connected by a combinatorial path of length at most $CR_0/3$, and so there is a combinatorial path from $x$ to $y$ of length at most $CR_0n/3$. 
Finally, because $R_0n < \mathrm{length}(\gamma) + R_0 \leq 3\hat{d}(x,y)$, the first statement of the lemma holds.

The number of horizontal jumps and saddle connections in the combinatorial path joining $x$ and $y$ can be bounded as follows. 
Each horizontal jump has length at least $1$ because the horoballs associated to parabolic directions were constructed to be $1$-separated from each other and from $\partial \hull{G}$. 
Therefore there can be at most $C \hat{d}(x,y)$ horizontal jumps in the combinatorial path. 
Since the combinatorial path is a concatenation of collapsed preferred paths, it can include no more saddle connections than there are horizontal jumps.
\end{proof}

Combined with \cref{thm:collapsed-preferred-paths-form-slim-triangles}, this bound on $\hat{d}(x,y)$ allows for a bound on the diameter of the sets $L(u,v)$. The following statement is identical to {\cite[Lemma 4.4]{ddls-extensions}}; the proof is nearly identical except for some changes in the language regarding combinatorial paths, which were given a different defintion in this paper (see \cref{def:combinatorial-path} above).

\begin{corollary}
\label{cor:g-g-pt-2}
There exists a constant $C > 0$ so that if $u, v \in \calV$ with $\hat{d}(u,v) \leq 3R_0$ (where $R_0$ is a constant so that $\calV$ is $R_0$-dense in $\hat{E}$, as in the proof of \cref{lem:combinatorial-paths-bounded-length}), then $\mathrm{diam}(L(u,v)) \leq C$.
\end{corollary}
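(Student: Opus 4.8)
The plan is to place the whole set $L(u,v)$ inside a bounded neighbourhood of one short combinatorial path from $u$ to $v$; since a combinatorial path of bounded length has bounded diameter, this bounds $\mathrm{diam}\left(L(u,v)\right)$. Say $u$ is a vertex of $T_\alpha$ and $v$ a vertex of $T_\beta$, so that $\theta^u\subset\partial\calB_\alpha$ and $\theta^v\subset\partial\calB_\beta$ both collapse under $P$ to single points (namely $u$ and $v$), and $P$ agrees with $t_\alpha$, resp.\ $t_\beta$, on $\calB_\alpha$, resp.\ $\calB_\beta$.

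The first ingredient I would establish is that \emph{collapsed preferred paths within a common spine are trivial}: if $p,p'\in\theta^w\cap\Sigma$ with $w$ a vertex of $T_\alpha$, then $\hat{\varsigma}(p,p')$ is the constant path at $w$. Unwinding \cref{def:preferred-paths}: because $f=f_{X_0}$ intertwines the collapses to $T_\alpha$, the cone points $f(p),f(p')$ both lie in the spine $\theta^w_{X_0}\subset E_0$, which is a tree assembled from saddle connections in the direction $\alpha$ meeting only at cone points, and there only with angle $\geq\pi$ on either side (a feature of the construction of $T_\alpha$ and the flat geometry of $E_0$). Hence the $E_0$-geodesic $[f(p),f(p')]=\sigma_1\cdots\sigma_k$ stays inside $\theta^w_{X_0}$, so $\dir{\sigma_i}=\alpha$ and $X_{\dir{\sigma_i}}=X_\alpha$ for every $i$. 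Then each saddle piece $\gamma_i=f_{X_\alpha}(\sigma_i)$ lies in $\theta^w_{X_\alpha}=t_\alpha^{-1}(w)\cap E_{X_\alpha}$, so $P(\gamma_i)=t_\alpha(\gamma_i)=w$; the intermediate horizontal pieces are degenerate since consecutive saddle pieces share an endpoint in $E_{X_\alpha}$; and $h_0,h_k$ lie over chords of the convex horoball $B_\alpha$ between boundary points, hence lie in $\calB_\alpha$, where $t_\alpha$ factors through $f_{X_\alpha}$, which is constant on each horizontal fibre with value in $\theta^w_{X_\alpha}$ there. Thus $P$ collapses every piece of $\varsigma(p,p')$ to $w$.

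Next, given $u,v\in\calV$ with $\hat{d}(u,v)\leq 3R_0$, \cref{lem:combinatorial-criterion} produces a combinatorial path $\tau$ from $u$ to $v$ of length $\leq C\hat{d}(u,v)\leq 3CR_0$ using at most $2C\hat{d}(u,v)\leq 6CR_0$ horizontal jumps and saddle connections. Grouping these, write $\tau=\hat{\varsigma}(p_0,p_1)\cdots\hat{\varsigma}(p_{m-1},p_m)$ as a concatenation of $m\leq 6CR_0$ collapsed preferred paths, where one takes the endpoint cone points on the horoball boundaries exactly as in the construction in the proof of \cref{lem:combinatorial-paths-bounded-length}; in particular $P(p_0)=u$ forces $p_0\in\theta^u\cap\Sigma$, and likewise $p_m\in\theta^v\cap\Sigma$. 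For arbitrary $x\in\theta^u\cap\Sigma$ and $y\in\theta^v\cap\Sigma$, applying \cref{thm:collapsed-preferred-paths-form-slim-triangles} successively to the triangles along the chain $x,p_0,p_1,\dots,p_m,y$ of points of $\Sigma$ gives
$$\hat{\varsigma}(x,y)\subset N_{(m+1)\delta}\left(\hat{\varsigma}(x,p_0)\cup\hat{\varsigma}(p_0,p_1)\cup\cdots\cup\hat{\varsigma}(p_{m-1},p_m)\cup\hat{\varsigma}(p_m,y)\right).$$
By the first ingredient $\hat{\varsigma}(x,p_0)=\{u\}$ and $\hat{\varsigma}(p_m,y)=\{v\}$, and $u,v$ both lie on $\tau$, so the right-hand side equals $N_{(m+1)\delta}(\tau)$. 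Taking the union over all admissible $x,y$ yields $L(u,v)\subset N_{(6CR_0+1)\delta}(\tau)$, whence $\mathrm{diam}\left(L(u,v)\right)\leq\mathrm{diam}(\tau)+2(6CR_0+1)\delta\leq 3CR_0+(12CR_0+2)\delta$, the claimed bound after enlarging $C$.

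I expect the main obstacle to be the first ingredient: verifying carefully that the $E_0$-geodesic between two cone points of a common spine remains inside that spine — this rests on the fact that spines are trees of saddle connections along which the path through each cone point is automatically a local geodesic — and that the collapse map $P$ kills the resulting saddle and horizontal pieces of the preferred path. Granting this, the remainder is the slim-triangle bookkeeping above, carried out essentially as in \cite[Lemma 4.4]{ddls-extensions}, the only adjustments being to the language of combinatorial paths (\cref{def:combinatorial-path}); the non-lattice hypothesis has already been accommodated inside the cited ingredients \cref{lem:combinatorial-criterion} and \cref{thm:collapsed-preferred-paths-form-slim-triangles}.
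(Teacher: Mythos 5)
Your proposal is correct and follows essentially the same route as the paper: a combinatorial path of bounded complexity from \cref{lem:combinatorial-criterion}, repeated application of \cref{thm:collapsed-preferred-paths-form-slim-triangles} along it, and the fact that collapsed preferred paths between cone points of a common spine degenerate to the tree vertex. The only difference is packaging: the paper bounds the diameter of a single $\hat{\varsigma}(x,y)$ and then cites \cite[Lemma 4.3]{ddls-extensions} to place all of $L(u,v)$ within $2\delta$ of it, whereas you prove that spine fact directly (correctly, via convexity of spines in the $\CAT{0}$ fiber) and fold it into one slimness chain per pair $(x,y)$.
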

\begin{proof}
Pick $x \in \theta^u \cap \Sigma$ and $y \in \theta^v \cap \Sigma$. 
By \cref{lem:combinatorial-criterion} there is a combinatorial path from $P(x) = u$ to $P(y) = v$ of bounded length which is a concatenation of $n$ horizontal jumps and saddle connections, where the bound on the length and the number $n$ depend only on $R_0$. 
By repeatedly applying \cref{thm:collapsed-preferred-paths-form-slim-triangles}, $\hat{\varsigma}(x,y)$ is in the $n\delta$-neighborhood of the combinatorial path joining $P(x)= u$ to $P(y)=v$.
Therefore $\hat{\varsigma}(x,y)$ has uniformly bounded diameter.
As in \cite[Lemma 4.3]{ddls-extensions}, $L(u,v)$ is contained in the $2\delta$-neighborhood of $\hat{\varsigma}(x,y)$, where $\delta > 0$ is the constant from \cref{thm:collapsed-preferred-paths-form-slim-triangles}. Therefore $L(u,v)$ has uniformly bounded diameter also.
\end{proof}

Finally, it is possible to establish hyperbolicity of $\hat{E}$.

\begin{theorem}
\label{thm:hat-e-is-hyperbolic}
The space $(\hat{E}, \hat{d})$ is hyperbolic when the limit set $\Lambda(G)$ contains at least one parabolic point.
\end{theorem}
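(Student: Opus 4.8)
The plan is to apply the guessing geodesics criterion (\cref{lem:guessing-geodesics}) with $\Omega = \hat{E}$, which is a length space, with the $R_0$-dense subset $\Upsilon = \calV$, and with the sets $L(u,v)$ constructed in \cref{sec:sets-l-u-v}. As already observed there, each $L(u,v)$ is a rectifiably path-connected subset of $\hat{E}$ containing both $u$ and $v$, so it only remains to verify the two numbered hypotheses of \cref{lem:guessing-geodesics}. It is precisely here that the standing assumption enters: since $\Lambda(G)$ contains a parabolic point, there is at least one parabolic direction, hence at least one Bass--Serre tree, hence $\calV \neq \emptyset$; combined with $\Gamma$-invariance of $\calV$ and compactness of $\hat{E}/\Gamma$ (as in the proof of \cref{lem:combinatorial-paths-bounded-length}) this gives the $R_0$-density of $\calV$ needed to invoke the criterion. (When $\Lambda(G)$ has no parabolic point, $\calV$ is empty and this route is unavailable; that case is covered separately by \cref{thm:new-old}.)

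For hypothesis (1), that the $L(u,v)$ form $\delta$-slim triangles, I would deduce this directly from \cref{thm:collapsed-preferred-paths-form-slim-triangles}. Fix $u,v,w \in \calV$ and a point $p \in L(u,v)$; by definition $p$ lies on some collapsed preferred path $\hat{\varsigma}(x,y)$ with $x \in \theta^u \cap \Sigma$ and $y \in \theta^v \cap \Sigma$. Choosing any $z \in \theta^w \cap \Sigma$ (such $z$ exists because $\Sigma$ is $r$-dense in $\overline{E}$ and $\theta^w$ is a spine-union of positive width), \cref{thm:collapsed-preferred-paths-form-slim-triangles} gives $\hat{\varsigma}(x,y) \subset N_\delta\left(\hat{\varsigma}(x,z) \cup \hat{\varsigma}(z,y)\right)$, while $\hat{\varsigma}(x,z) \subset L(u,w)$ and $\hat{\varsigma}(z,y) \subset L(w,v)$ by the definition of the $L$-sets. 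Hence $p \in N_\delta\left(L(u,w) \cup L(w,v)\right)$, and taking the union over all admissible $x,y$ yields $L(u,v) \subset N_\delta\left(L(u,w) \cup L(w,v)\right)$, as required.

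Hypothesis (2), that $\hat{d}(u,v) \leq 3R_0$ forces a uniform bound on $\mathrm{diam}(L(u,v))$, is exactly the content of \cref{cor:g-g-pt-2}. Taking the constant in \cref{lem:guessing-geodesics} to be $\delta' := \max\{\delta, C\}$, where $\delta$ is the slimness constant of \cref{thm:collapsed-preferred-paths-form-slim-triangles} and $C$ is the constant of \cref{cor:g-g-pt-2}, both hypotheses hold with $\delta'$, and \cref{lem:guessing-geodesics} gives that $\hat{E}$ is hyperbolic.

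The genuine work — and the only real obstacle — is entirely packed into \cref{thm:collapsed-preferred-paths-form-slim-triangles}, which is assumed here and proved over the course of \cref{sec:slimness}; the argument above is just the bookkeeping that repackages that theorem, the $R_0$-density of $\calV$, and the diameter bound of \cref{cor:g-g-pt-2} into the hypotheses of the guessing geodesics criterion, exactly parallel to \cite{ddls-extensions}. The one point requiring care is the observation that a parabolic direction is indispensable, since that is what makes $\calV$ nonempty and thereby converts cocompactness of $\hat{E}/\Gamma$ into the density statement the criterion demands.
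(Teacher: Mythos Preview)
Your proposal is correct and follows essentially the same route as the paper: apply the guessing geodesics criterion with $\Upsilon = \calV$, invoke \cref{thm:collapsed-preferred-paths-form-slim-triangles} for slimness and \cref{cor:g-g-pt-2} for the diameter bound. Your verification of hypothesis~(1) is in fact slightly more direct than the paper's---you pick $x,y$ depending on the point $p$ and get $\delta$-slim triangles immediately, whereas the paper passes through \cite[Lemma~4.3]{ddls-extensions} (that $L(u,v)$ lies in the $2\delta$-neighborhood of any single $\hat{\varsigma}(x,y)$) and obtains $3\delta$-slim---but this is a cosmetic streamlining of the same argument.
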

\begin{proof}
Hyperbolicity of $\hat{E}$ will be proven by the guessing geodesics criterion (\cref{lem:guessing-geodesics}).
It was established during construction that $(\hat{E}, \hat{d})$ is a length space (see \cref{sec:hatted-bundle} and \cite[Lemma 3.2]{ddls-extensions}).
Let $\calV \subset \hat{E}$ be the collection of all vertices of Bass-Serre trees in $\hat{E}$.
Let $R_0 > 0$ be as in the proof of \cref{lem:combinatorial-paths-bounded-length}, so that $\calV$ is $R_0$-dense in $\hat{E}$.
For any $u,v \in \calV$, $L(u,v)$ is a rectifiably path-connected set containing $u$ and $v$ (see \cref{sec:sets-l-u-v}).
Let $\delta > 0$ be the constant from \cref{thm:collapsed-preferred-paths-form-slim-triangles}.
As in \cite[Lemma 4.3]{ddls-extensions}, each $L(u,v)$ is contained in the $2\delta$-neighborhood of $\hat{\varsigma}(x,y)$ for any $x \in \theta^u \cap \Sigma$ and $y \in \theta^v \cap \Sigma$; because triangles of collapsed preferred paths in $\hat{E}$ are $\delta$-slim (\cref{thm:collapsed-preferred-paths-form-slim-triangles}), the $L(u,v)$ also form $3\delta$-slim triangles, satisfying condition (1) of the guessing geodesics criterion.
Finally, by \cref{cor:g-g-pt-2} the sets $L(u,v)$ have diameter bounded by a constant $C>0$ whenever $\hat{d}(u,v) \leq 3 R_0$, satisfying condition (2) of the guessing geodesics criterion.
Therefore $(\hat{E}, \hat{d})$ is hyperbolic.
\end{proof}

The action of $\Gamma$ on the total space $E$ is isometric by construction but non-cocompact in general.
The truncated convex hull $\overline{E}$ was constructed as a subspace on which $\Gamma$ acts cocompactly in addition to isometrically.
By a direct application of the Schwarz-Milnor lemma, $\overline{E}$ and $\Gamma$ are quasi-isometric.
However, neither is hyperbolic unless there are no parabolic directions on $S$.
To construct a hyperbolic space from $E$, for parabolic saddle connections $\sigma$ the sets $\calB_{\dir{\sigma}}$ are collapsed to Bass-Serre trees.
In $\Gamma$, this corresponds to coning off by the \termttt{vertex subgroups}, the stabilizers of the vertices of the Bass-Serre trees under the isometric action of $\Gamma$.
Then when the limit set $\Lambda(G)$ contains at least one parabolic point, the main theorem follows.
The proof is essentially identical to that of {\cite[Corollary 4.5]{ddls-extensions}}, but it is included here for the sake of completeness.
\ddlsgeneralization
\begin{proof}
    Recall from \cref{sec:hatted-bundle} that $\hat{E}$ is a length space.
    By construction, the action of $\Gamma$ on $\overline{E}$ is isometric and cocompact, and therefore so is the action of $\Gamma$ on $\hat{E}$.
    Let $\Upsilon_1, \dots, \Upsilon_k < \Gamma$ be representatives of the conjugacy classes of vertex subgroups stabilizing the vertices of the Bass-Serre trees.
    Any point-stabilizers for the action of $\Gamma$ on $\hat{E}$ are trivial or conjugate into one of the $\Upsilon_*$, and therefore any point in $\hat{E}$ has a discrete orbit under $\Gamma$.
    By \cref{thm:groupy-schwarz-milnor} it follows that, for any finite generating set $\calS$ of $\Gamma$, the coned-off Cayley graph $\mathrm{Cay}(\Gamma, \calS \cup \bigcup \Upsilon_i)$ is quasi-isometric to $\hat{E}$.
    Finally, because $\hat{E}$ is hyperbolic by \cref{thm:hat-e-is-hyperbolic}, so too is the coned-off Cayley graph of $\Gamma$.
\end{proof}

%%%
%%%
%%%
%%%
%%%
\section{Slim Triangles of Collapsed Preferred Paths}
\label{sec:slimness}

\cref{sec:hyperbolicity} verified all but one condition for the guessing geodesics criterion (\cref{lem:guessing-geodesics}). This section will verify the remaining condition, which is that all triangles of collapsed preferred paths in $\hat{E}$ are slim (\cref{thm:collapsed-preferred-paths-form-slim-triangles}). 

The following definitions are comparable to those in {\cite[Section 4.3]{ddls-extensions}}.

\begin{definition}
Given $x,y,z \in \Sigma$, define the associated geodesic triangle in $E_0$ by
$$ \Delta(x,y,z) := \left[ f(x), f(y) \right] \cup \left[ f(y), f(z) \right] \cup \left[ f(z), f(x) \right], $$
\nomenclature[Delta(x,y,z)]{$\Delta(x,y,z)$}{triangle of geodesics in $E_0$ with vertices $f(x)$, $f(y)$, $f(z)$ for $x,y,z \in \Sigma$}
the triangle of preferred paths in $E$ by 
$$ \Delta^{\varsigma}(x,y,z) := [\varsigma(x,y)] \cup [\varsigma(y,z)] \cup [\varsigma(z,x)], $$
    \nomenclature[Delta varsigma (x,y,z)]{$\Delta^{\varsigma}(x,y,z)$}{triangle of preferred paths in $E$ with vertices $x,y,z \in \Sigma$}
and the triangle of collapsed preferred paths in $\hat{E}$ by 
$$ \Delta^{\hat{\varsigma}}(x,y,z) := [\hat{\varsigma}(x,y)] \cup [\hat{\varsigma}(y,z)] \cup [\hat{\varsigma}(z,x)]. $$
    \nomenclature[Delta varsigma hat(x,y,z)]{$\Delta^{\hat{\varsigma}}(x,y,z)$}{triangle of collapsed preferred paths in $\hat{E}$ with vertices $x, y, z \in \Sigma$}

\noindent Equivalently, $\Delta^{\hat{\varsigma}}(x,y,z)$ is the image of $\Delta^\varsigma(x,y,z)$ under the map $P$.
If any pair of the sides of $\Delta(x,y,z)$ intersect at one or more (non-trivial) saddle connections, then $\Delta(x,y,z)$ is called \termttt{degenerate}. Otherwise, if each pair of sides of $\Delta(x,y,z)$ intersect only at the vertices, then $\Delta(x,y,z)$ is called \termttt{nondegenerate}.
The triangles $\Delta^{\varsigma}(x,y,z)$ and $\Delta^{\hat{\varsigma}}(x,y,z)$ are described as degenerate (or nondegenerate) if the triangle $\Delta(x,y,z)$ is degenerate (or nondegenerate).
\end{definition}

Recall from the construction that $E_0 \cong \tilde{S}$ (equipped with the lift of the flat metric $q_0$). Since $E_0$ is complete, simply connected, and nonpositively curved, it is $\CAT{0}$ (as a result of the Cartan-Hadamard theorem) and therefore it is uniquely geodesic.
Then the nontrivial intersection of two sides of a degenerate triangle must be a concatenation of saddle connections based at the shared vertex. 
As a consequence, every degenerate triangle---whether in $E_0$, $E$, or $\hat{E}$---contains a subtriangle which is nondegenerate.
See \cref{fig:degenerate-triangle}.

\begin{figure}[h]
\begin{center}
\begin{tikzpicture}[scale = .5]
\draw[ultra thick] (0,0) --  (5,1) -- (10,-1) -- (7,1) -- (6,2) -- (5,6) -- (4.5,4) -- (3,1.5) -- (0,0); 
\draw (5,6) -- (5,6.5) -- (4.5,7) -- (5,8);
\draw (10,-1) -- (10.5,-1.25) -- (11,-.5) -- (11.5,-1) -- (12,-1);
\draw (0,0) -- (-.5,0) -- (-1,.5) -- (-1.5,0) -- (-2,0) -- (-2.5,-.5) -- (-3,-.5);
\end{tikzpicture}
\caption{Illustration from \cite[Figure 2]{ddls-extensions} of a degenerate triangle in $E_0$ with the nondegenerate subtriangle in bold.}
\label{fig:degenerate-triangle}
\end{center}
\end{figure}
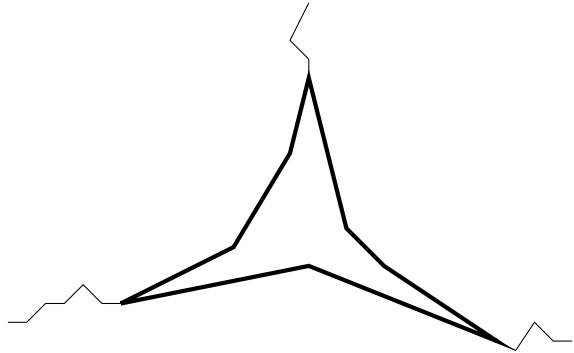

To prove that all triangles are slim, it suffices to prove that all nondegenerate triangles are slim (see \cite[Lemma 4.6]{ddls-extensions} for a detailed proof). 
Therefore the rest of this section assumes that all triangles are nondegenerate.

\subsection{Euclidean triangles}

\begin{definition}
% See also \cite{sw-characterizations-of-lattice-surfaces} or \cite{vorobets-planar-structures}.}
Given $x,y,z \in \Sigma \cap E_0$, the triangle $\Delta(x,y,z)$ is called \termttt{Euclidean} if it is nondegenerate and each side consists of a single saddle connection.
When the information about $x,y,z$ is clear from context, a Euclidean triangle is sometimes denoted simply by $T$.
\end{definition}

\begin{proposition}[\toolttt{no short saddle connections condition}]
\label{cor:no-short-saddle-connections-in-truncated-hull}
For any $X \in \overline{D}$, there is a positive lower bound on the lengths of all saddle connections in $(S,X)$.
\end{proposition}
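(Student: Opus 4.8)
The plan is to reduce to compactness of $\overline{D}/G$. For $X \in \hull{G}$ write $\mathrm{sys}(X)$ for the length of the shortest saddle connection on $(S,X)$; this is a genuine minimum (not merely an infimum), since for any bound $L$ there are only finitely many saddle connections on $(S,X)$ of length at most $L$, their holonomy vectors forming a discrete subset of $\R^2$. In particular $\mathrm{sys}(X) > 0$ for every $X$. The claim is exactly that $\mathrm{sys}$ is bounded below by a positive constant on $\overline{D}$, and for this the two ingredients are: (a) $\mathrm{sys}$ is $G$-invariant and does not degenerate on compact subsets of $\hull{G}$, and (b) $G$ acts cocompactly on $\overline{D}$.

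For (a), first note that for every $g \in G$ the flat surface $g\cdot(S,X,q_X)$ is isometric to $(S,X,q_X)$ (this is precisely the observation used in the proof of \cref{lem:return-to-compact-implies-no-saddle}), so $\mathrm{sys}(gX) = \mathrm{sys}(X)$ and $\mathrm{sys}$ descends to a function on $\overline{D}/G$. Next I would argue that $\mathrm{sys}$ is lower semicontinuous on $\hull{G}$ — which is the analogue for the full systole of the continuity statement already invoked for the direction-restricted systole in \cref{lem:return-to-compact-implies-no-saddle}. Concretely, if $X_n \to X_0$ in $\hull{G}$ then the Teichm\"uller map $(S,X_n,q_{X_n}) \to (S,X_0,q_{X_0})$ is $e^{\rho(X_n,X_0)}$-bilipschitz with $\rho(X_n,X_0) \to 0$, so a saddle connection $\sigma_n$ on $(S,X_n)$ realizing $\mathrm{sys}(X_n)$ is carried to an arc between cone points of $(S,X_0)$ of length at most $e^{\rho(X_n,X_0)}\mathrm{sys}(X_n)$; straightening this arc to its $q_{X_0}$-geodesic representative produces a concatenation of saddle connections of at most that total length, each summand therefore of length at most $e^{\rho(X_n,X_0)}\mathrm{sys}(X_n)$. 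Hence $\mathrm{sys}(X_0) \le e^{\rho(X_n,X_0)}\mathrm{sys}(X_n)$ for all $n$, and letting $n \to \infty$ gives $\mathrm{sys}(X_0) \le \liminf_n \mathrm{sys}(X_n)$.

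For (b), recall that since $G$ is finitely generated it acts cocompactly on $\overline{D}$ (\cref{sec:convex-hull-and-convex-core}; this is exactly what the horoball truncation achieves). Fix a compact set $K \subset \overline{D}$ with $GK = \overline{D}$. A lower semicontinuous, strictly positive function on the compact set $K$ attains a positive minimum $\varepsilon > 0$, and by $G$-invariance $\mathrm{sys}(X) \ge \varepsilon$ for all $X \in \overline{D}$; thus every saddle connection $\sigma$ on $(S,X)$ with $X \in \overline{D}$ has $\length{\sigma} \ge \varepsilon$, as desired.

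The step I expect to require the most care is the lower semicontinuity in step (a): one must be sure that a short saddle connection on $(S,X_n)$ genuinely forces a short saddle connection on the limit $(S,X_0)$, which is where the bilipschitz-with-constant-near-$1$ estimate for Teichm\"uller maps and the ``straighten and take one summand'' observation are used (together with the fact that a geodesic arc between two \emph{distinct} cone points, or a non-contractible geodesic loop at a single cone point, has positive length on the fixed surface $(S,X_0)$). Everything else is routine. One could alternatively phrase the whole argument contrapositively: a sequence $X_n \in \overline{D}$ with saddle connections of length tending to $0$ can, after translating by $G$, be assumed to lie in $K$ and converge to some $X_\infty \in \overline{D}$, and then pushing the short saddle connections forward by the Teichm\"uller maps to $(S,X_\infty)$ yields arbitrarily short non-contractible arcs between (after passing to a subsequence, fixed) cone points of the single flat surface $(S,X_\infty)$, a contradiction.
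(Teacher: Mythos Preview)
Your proposal is correct and follows essentially the same route as the paper: both argue that the systole function is $G$-invariant and positive, then use compactness of $\overline{D}/G$ to conclude. The paper simply asserts that the shortest-saddle-connection length is continuous on $D/G$ and invokes compactness of $\overline{D}/G$, whereas you supply a more careful lower-semicontinuity argument via the $e^{\rho}$-bilipschitz Teichm\"uller maps; this extra care is not strictly needed (the same bilipschitz estimate in fact gives full continuity), but it does no harm.
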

\begin{proof}
Recall from \cref{def:horoballs} that for parabolic saddle connections $\sigma$ the horoballs $B_{\dir{\sigma}}$ were chosen so that over $\partial B_{\dir{\sigma}}$ the saddle connection $\sigma$ would have (positive) length less than any saddle connection in a different direction. (The lengths of the nonparabolic saddle connections are necessarily positive over the convex hull.)
Because the length of the shortest saddle connection on $(S,X)$ is a continuous function on $D/G$ and $\overline{D}/ G$ is compact, there must be a positive lower bound on the lengths of all saddle connections in the fibers over $\overline{D}$.
\end{proof}

It is also true that Euclidean triangles cannot be too large. 
This is known for lattice surfaces (see for instance \cite{vorobets-planar-structures}), but requires a new proof for general flat surfaces.

\begin{proposition}[\toolttt{no large triangles condition}]
\label{lem:euclidean-triangle-bounded-area}
There exists $A > 0$ so that for any $X \in D$ the area of any Euclidean triangle in $(S,X,q)$ is at most $A$.
\end{proposition}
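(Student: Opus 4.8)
\emph{Proof plan.} The plan is to reduce to the fixed fiber and then argue by contradiction. Since the Teichm\"uller disk $D$ is a single $\mathrm{SL}_2(\R)$-orbit and the Teichm\"uller maps $f_{X,Y}$ are affine with unit Jacobian, every fiber $(S,X,q_X)$ has the same total flat area $\mathcal A_0:=\mathrm{Area}(S,X_0,q_0)$, and the area of a Euclidean triangle does not depend on the fiber in which it is measured. It therefore suffices to bound the areas of Euclidean triangles in the single flat surface $(S,X_0,q_0)$, where two features unavailable over a general fiber are at hand: a positive lower bound $\mathrm{sys}_0>0$ on the lengths of all saddle connections and all closed geodesics, and a finite $r_0>0$ such that every point of $\tilde{S}=E_0$ lies within distance $r_0$ of a cone point (both by compactness of $S$). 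In particular, since the interior of a Euclidean triangle $T$ contains no cone point, its incenter is at distance at least $\mathrm{inrad}(T)$ from every cone point, so $\mathrm{inrad}(T)\le r_0$; thus only ``long and thin'' triangles, not ``fat'' ones, require work.

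So let $T$ be a Euclidean triangle in $(S,X_0,q_0)$ with a longest side $\sigma$, of length $L$ and direction $\alpha=\dir{\sigma}\in\partial D$. Plane geometry gives an isometrically embedded flat rectangle $R\subseteq T$, one of whose sides is a subsegment of $\sigma$ of length $L/2$ while the perpendicular sides have length $w$, with $\mathrm{Area}(R)=\tfrac12\mathrm{Area}(T)$, hence $\mathrm{Area}(T)=Lw$ and $w=\mathrm{Area}(T)/L$; the interior of $R$ contains no cone point, and the leaves of the foliation $\calF_\alpha$ meeting $R$ are nonsingular geodesic segments of length $L/2$ parallel to $\sigma$. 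By \cref{cor:to-generalized-veech-dichotomy}, $\alpha$ is either a parabolic direction or a direction outside $\Lambda(G)$, and I would treat these separately.

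If $\alpha$ is parabolic, the generalized Veech dichotomy (\cref{thm:generalized-veech-dichotomy}) gives a cylinder decomposition of $(S,X_0,q_0)$ in direction $\alpha$, and $\sigma$ is one of its boundary saddle connections, so $L\le c_{\mathcal C}$ for the circumference of the cylinder $\mathcal C$ adjacent to $\sigma$ on the side of $R$. Moving $R$ away from $\sigma$, its leaves are core-parallel, so $R$ stays inside $\mathcal C$ until it crosses the opposite boundary circle of $\mathcal C$; because $\mathring R$ has no cone point, such a crossing runs through the interior of a single boundary saddle connection, necessarily of length $\ge L/2$, so the next cylinder also has circumference $\ge L/2$. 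Since a cylinder decomposition of $S$ has at most some $N_0=N_0(S)$ cylinders, $R$ meets at most $N_0$ of them, and its perpendicular extent inside each is at most that cylinder's height, which is at most $\mathcal A_0$ over its circumference, hence $\le 2\mathcal A_0/L$. Therefore $w\le 2N_0\mathcal A_0/L$ and $\mathrm{Area}(T)=Lw\le 2N_0\mathcal A_0$.

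If $\alpha\notin\Lambda(G)$, then $\calF_\alpha$ carries the saddle connection $\sigma$, so is not minimal; let $\mathcal C$ now be the component of the complement of the singular leaves of $\calF_\alpha$ on the side of $\sigma$ containing $R$. If $\mathcal C$ is a cylinder the previous paragraph applies verbatim. The remaining case---$\mathcal C$ a minimal component---is where the argument departs essentially from the lattice setting, and is the step I expect to be the main obstacle. Here the central leaf of $R$ is a nonsingular, non-closed leaf of $\calF_\alpha$, hence recurrent and dense in $\overline{\mathcal C}$; I would bound the first return of such a leaf to within transverse distance $w$ of a fixed cone point on $\partial\mathcal C$ by a standard area estimate of order $\mathcal A_0/w$, so that once the length $L/2$ of the leaf segment inside $R$ exceeds this---which, as $w=\mathrm{Area}(T)/L$, is precisely the regime $\mathrm{Area}(T)\gtrsim\mathcal A_0$---the strip $R$ sweeps out a neighborhood of that cone point and hence contains it in its interior, a contradiction with $\mathring R$ being cone-point-free. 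The delicate points are making the return-time estimate uniform over all minimal components (the foliation need not be uniquely ergodic) and arranging the close approach to occur in the interior rather than near the ends of the segment. Combining the three cases bounds $\mathrm{Area}(T)$ in terms of $S$ alone, and by the first paragraph this bound is valid over every fiber, yielding the constant $A$.
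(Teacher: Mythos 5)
Your preliminary reductions are fine: area is constant along the $\mathrm{SL}_2(\R)$-orbit, an affine image of a Euclidean triangle is again a Euclidean triangle, and (by a Gauss--Bonnet count) the region bounded by the three saddle connections contains no cone point, so it really is a flat triangle. But the heart of your argument has genuine gaps, and you flag the main one yourself: in the case $\dir{\sigma}\notin\Lambda(G)$ with the strip entering a minimal component, your ``standard area estimate'' for the first close approach of a leaf to a fixed cone point is only a plan. This is exactly where the difficulty lives---such components need not be uniquely ergodic, there is no uniformity over the (uncountably many) directions outside the limit set, and ensuring the close approach happens in the interior of the leaf segment is not a technicality one can wave at; as written this case does not close. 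Moreover, even your cylinder case has a flaw: the triangle, hence the rectangle $R$, lives in the universal cover $E_0\cong\tilde{S}$, and its projection to $S$ may cross the \emph{same} cylinder of the $\dir{\sigma}$-decomposition many times (think of a one-cylinder decomposition, where $N_0=1$ but a transverse strip re-enters the unique cylinder at every crossing). So ``$R$ meets at most $N_0$ cylinders'' bounds the number of distinct cylinders, not the number of crossings, and the estimate $w\leq 2N_0\mathcal{A}_0/L$ does not follow without a bound on crossing multiplicity---which is an embeddedness statement you have not proved.

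That embeddedness statement is precisely the missing idea, and it is how the paper's proof goes: by \cite[Lemma 2.1]{nguyen:topological-veech-dichotomy}, any Euclidean triangle minus its vertices is \emph{embedded} in the flat surface $(S,X,q)$, so its area is at most the area of the surface, and one simply takes $A$ to be that area (constant over $D$, as you observed). In effect, both branches of your case analysis are attempts to reprove a weak form of this embedding lemma by hand, and neither branch is complete without it; if you want an argument independent of the citation, the productive route is to prove the embedding/multiplicity bound directly rather than the return-time estimate.
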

\begin{proof}
Any Euclidean triangle minus its vertices is in the flat surface $(S,X,q)$ is embedded (\cite[Lemma 2.1]{nguyen:topological-veech-dichotomy}).
Choose $A$ to be the area of $(S,X,q)$.
Then any Euclidean triangle in $(S,X,q)$ has area at most $A$.
\end{proof}

\begin{corollary}
Every Euclidean triangle is uniformly slim.
\end{corollary}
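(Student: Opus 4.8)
The plan is to combine the two conditions just established — the \emph{no short saddle connections condition} (\cref{cor:no-short-saddle-connections-in-truncated-hull}) and the \emph{no large triangles condition} (\cref{lem:euclidean-triangle-bounded-area}) — to conclude that Euclidean triangles have uniformly bounded diameter, and then observe that a Euclidean triangle in a $\mathrm{CAT}(0)$ space of bounded diameter is automatically uniformly slim. First I would restrict attention to fibers $E_X$ with $X \in \overline{D}$: a Euclidean triangle $T = \Delta(x,y,z)$ with $x,y,z \in \Sigma \cap E_0$ can be transported, via the affine maps $f_{X,X_0}$, to any fiber; but since each side $\sigma_i$ is a single saddle connection, it is most natural to work in the fiber $E_{X_{\dir{\sigma_i}}}$ relevant to the preferred path — in any case, it suffices to bound the geometry of $T$ in a fiber over $\overline{D}$, where both propositions apply.

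The key estimate is that the three side lengths of $T$ are uniformly bounded above. By \cref{cor:no-short-saddle-connections-in-truncated-hull} there is $\varepsilon_0 > 0$ so that every saddle connection in $E_X$ has length at least $\varepsilon_0$; in particular each of the three sides $\sigma_1, \sigma_2, \sigma_3$ of $T$ has length $\geq \varepsilon_0$. A Euclidean triangle with all three sides of length $\geq \varepsilon_0$ and area $\leq A$ (by \cref{lem:euclidean-triangle-bounded-area}) cannot have any side longer than $2A/\varepsilon_0$: indeed, the longest side is a base whose corresponding altitude $h$ satisfies $\tfrac{1}{2}(\text{base}) \cdot h = \text{area} \leq A$, and the altitude is at least the distance realized between the opposite vertex and the base, which in turn is at least (some fixed multiple of) $\varepsilon_0$ because the other two sides have length $\geq \varepsilon_0$ — more carefully, $h \geq \varepsilon_0 \sin\theta$ where $\theta$ is an angle adjacent to the longest side, and one argues that $\theta$ is bounded away from $0$ and $\pi$ using the lower bounds on all three side lengths together with $\text{area} \leq A$. (The cleanest route: the shortest altitude of $T$ is at least $\varepsilon_0 \cdot (\text{something}) $; rather than optimize constants I would just note area $\leq A$ and min side $\geq \varepsilon_0$ force every side $\leq 2A/\varepsilon_0$ by the base-times-height formula applied to whichever vertex is farthest from the line through the longest side, using that this farthest distance is realized along one of the two shorter sides and hence is at most their length but at least $\varepsilon_0 \sin\theta$.) Consequently $\mathrm{diam}(T) \leq 2A/\varepsilon_0 =: \Delta_0$, uniformly over all Euclidean triangles and all fibers over $\overline{D}$.

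Finally, slimness follows from uniformly bounded diameter together with the $\mathrm{CAT}(0)$ geometry of the fibers. Each fiber $E_X \cong \tilde S$ is $\mathrm{CAT}(0)$ (established in the construction: complete, simply connected, nonpositively curved), so the three sides of $T$ — being geodesic segments of the flat metric — bound a geodesic triangle to which the $\mathrm{CAT}(0)$ comparison applies; in particular any point on one side is within $\mathrm{diam}(T) \leq \Delta_0$ of the union of the other two sides, so $T$ is $\Delta_0$-slim. (One can do better with the $\mathrm{CAT}(0)$ thinness constant for triangles of perimeter $\leq 3\Delta_0$, but the crude bound $\Delta_0$ already suffices.) I expect the only real content to be the elementary planar geometry step — turning ``area bounded, all sides bounded below'' into ``all sides bounded above'' — and the mild care needed to ensure the relevant fiber lies over $\overline{D}$ so that both input propositions genuinely apply; everything else is bookkeeping.
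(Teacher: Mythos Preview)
Your central claim --- that an area bound together with a lower bound on all three side lengths forces an upper bound on the side lengths --- is false, and this breaks the argument. Take the isoceles triangle with vertices $(0,0)$, $(L,0)$, $(L/2, 2A/L)$: it has area exactly $A$, all three sides have length at least $L/2 \geq \varepsilon_0$ once $L \geq 2\varepsilon_0$, yet its diameter is $L$, which is unbounded. Your hedge that ``$\theta$ is bounded away from $0$ and $\pi$'' is precisely what fails: in this example the base angles tend to $0$. So the diameter bound $\Delta_0$ does not exist, and the final step (``slim because diameter is bounded'') collapses.

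The paper's proof avoids this entirely by never trying to bound the diameter. It uses only the area bound (\cref{lem:euclidean-triangle-bounded-area}) --- the no-short-saddle-connections condition is not invoked at all. The observation is that among Euclidean triangles of area at most $A$, the inradius is maximized by the equilateral triangle of area $A$ (since $r = \text{area}/s$ and the equilateral triangle minimizes perimeter for given area), giving $r \leq \sqrt{A}/3^{3/4}$. Slimness then follows because for any point $P$ on a side, $d(P, \text{other two sides}) \leq 2r$; concretely, parameterizing $P$ along side $a$ one gets $d(P, b \cup c) \leq 2A/(b+c) \leq 2A/s = 2r$. The thin isoceles example above is instructive here too: it has large diameter but tiny inradius, hence is extremely slim. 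The moral is that slimness is a thinness condition, not a smallness condition, and the area bound alone controls thinness.
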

\begin{proof}
The inradius of any Euclidean triangle is bounded above by the inradius of an equilateral triangle of area $A$ (where $A$ is the constant from \cref{lem:euclidean-triangle-bounded-area}), so every Euclidean triangle is $2\sqrt{A}/ 3^{3/4}$-slim.
\end{proof}

Throughout this section, it will be necessary to relate Euclidean triangles in $E_0$ to particular ideal triangles in $D$, described below.
\begin{definition}
Let $x,y,z \in \Sigma$ so that $T = \Delta(x,y,z)$ is a Euclidean triangle consisting of saddle connections $\sigma_x$, $\sigma_y$, and $\sigma_z$. 
The \termttt{ideal triangle associated to $T$}, denoted $\tilde{T}$, is the ideal triangle in $D$ with vertices $\dir{\sigma_x}$, $\dir{\sigma_y}$, and $\dir{\sigma_z}$.  
\end{definition}

\noindent These are studied in depth by \cite{nguyen:topological-veech-dichotomy}.

\begin{definition}
Let $\Delta(x,y,z)$ be a Euclidean triangle. The \termttt{balance point of $\Delta(x,y,z)$} is the unique point $b \in D$ such that $f_b\left( \Delta(x,y,z) \right)$ is an equilateral triangle. 
\nomenclature[b]{$b$}{balance point of a triangle}
The \termttt{adjusted balance point of $\Delta(x,y,z)$}, denoted $\adj{b}$, is the projection of $b$ onto $\hull{G}$. 
\nomenclature[b]{$\adj{b}$}{adjusted balance point of a triangle---that is, the projection of the balance point $b$ to $\hull{G}$}
If $b \in \hull{G}$, then $b$ and $\adj{b}$ coincide.
\end{definition}

\begin{remark}
Existence and uniqueness of the balance point is given by \cite[Lemma 4.9]{ddls-extensions}.
Uniqueness of the adjusted balance point follows from uniqueness of the closest-point projection.  
\end{remark}

\begin{proposition}
\label{rmk:balance-pt-is-incenter}
Let $T = \Delta(x,y,z)$ be a Euclidean triangle with sides consisting of saddle connections $\sigma_x$, $\sigma_y$, and $\sigma_z$. 
Then the balance point $b$ is the incenter of the ideal triangle $\tilde{T}$  (that is, the center of the largest circle inscribed in $\tilde{T}$).
\end{proposition}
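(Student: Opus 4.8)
The plan is to use the defining property of the balance point---that the affine copy $f_b(T)$ of $T$ over $b$ is equilateral---to manufacture an order-three isometry of $D$ that fixes $b$ and stabilizes the ideal triangle $\tilde{T}$, and then to identify its unique fixed point with the incenter.

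First I would record the shape of $T$ over $b$. Writing $\sigma_x,\sigma_y,\sigma_z$ for the three sides of $T$, so that $\tilde{T}$ has vertices $\dir{\sigma_x},\dir{\sigma_y},\dir{\sigma_z}\in\partial D$, the triangle $f_b(T)$, considered with respect to the flat metric $q_b$, has the saddle connections $f_b(\sigma_x),f_b(\sigma_y),f_b(\sigma_z)$ (in directions $\dir{\sigma_x},\dir{\sigma_y},\dir{\sigma_z}$) as its sides. Let $v_x,v_y,v_z$ be their holonomy vectors in the natural coordinates of $q_b$. Since these saddle connections bound a triangle, $v_x+v_y+v_z=0$; since $f_b(T)$ is equilateral, $|v_x|=|v_y|=|v_z|$. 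Expanding $|v_z|^2=|v_x+v_y|^2$ (and cyclically) forces $v_x\cdot v_y=v_y\cdot v_z=v_z\cdot v_x=-\frac{1}{2}|v_x|^2$, so the three holonomy vectors are pairwise at angle $120^\circ$; equivalently the three lines $[v_x],[v_y],[v_z]\subset\R^2$ are pairwise at angle $60^\circ$ and hence are cyclically permuted by the rotation $R$ of $\R^2$ through $60^\circ$.

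Next I would promote $R$ to an isometry $g$ of $D$. Acting on the natural coordinates of $q_b$, the rotation $R$ is an element of $\mathrm{SL}_2(\R)$ and so induces an isometry $g$ of $D$ via the $\mathrm{SL}_2(\R)$-action based at $(X_b,q_b)$. Two facts need checking, and this is the one point of the proof requiring care: (i) $g$ fixes $b$, because rotating the natural coordinates preserves the underlying complex structure $X_b$; and (ii) under the identification $\partial D\cong\P^1(q_b)$ recalled in \cref{sec:teichmueller-space} (the direction $\alpha$ corresponds to the boundary point toward which the length of an $\alpha$-saddle connection shrinks), $g$ acts on $\partial D$ exactly by the $60^\circ$-rotation of $\P^1(q_b)$---which follows by unwinding that identification against the $\mathrm{SL}_2(\R)$-action. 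Because $R^3$ (rotation by $180^\circ$, i.e. $z\mapsto -z$) acts trivially on quadratic differentials while $R$ rotates lines nontrivially, $g$ is a nontrivial order-three isometry of $D$, hence elliptic with exactly one fixed point, namely $b$.

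Finally I would conclude. By the first step $g$ cyclically permutes the three vertices of $\tilde{T}$, so $g(\tilde{T})=\tilde{T}$; uniqueness of the inscribed circle of $\tilde{T}$ then forces $g$ to fix its incenter. Since $b$ is the unique fixed point of $g$ in $D$, the incenter of $\tilde{T}$ is $b$. I expect the only real obstacle to be bookkeeping rather than depth: correctly matching the (a priori convention-laden) identifications among the $\mathrm{SL}_2(\R)$-action on $D$, the boundary $\partial D$, and the space of directions $\P^1(q_b)$, so that ``rotate $q_b$ by $60^\circ$'' genuinely becomes an order-three rotation of $D$ centered at $b$ which rotates $\partial D$ by $60^\circ$; once that is pinned down the symmetry argument is immediate.
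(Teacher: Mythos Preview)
Your proof is correct and follows essentially the same symmetry argument as the paper: both exploit that, in the $q_b$-coordinates over the balance point, a rotation by $\pi/3$ fixes $b$, cyclically permutes the three saddle-connection directions, and therefore stabilizes $\tilde{T}$, forcing $b$ to be the incenter. Your write-up is more explicit about the holonomy-vector computation and the passage from $\mathrm{SL}_2(\R)$ to an order-three isometry of $D$, but the underlying idea is identical.
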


\begin{proof}
Without loss of generality assume that $\dir{\sigma_x}$ is the vertical direction (perhaps after acting on $D$ with an element of $\mathrm{SO}(2)$; see \cref{sec:teichmueller-space}).
% Traversing the geodesic in $D$ through $b$ toward $\dir{\sigma_x}$ shrinks the length of $\sigma_x$.
Changing the coordinate chart by a rotation of $\pm\pi/3$ makes $\dir{\sigma_y}$ or $\dir{\sigma_z}$ vertical.
Therefore the three geodesic rays in $D$ starting at $b$ and ending at $\dir{\sigma_x}$, $\dir{\sigma_y}$, or $\dir{\sigma_z}$ diverge from $b$ at equal angles.
In particular, changing the coordinate chart by a rotation of $\pm\pi/3$ preserves $\tilde{T} \subset D$.
It also preserves the three geodesic segments from $b$ to the nearest point on each side of $\tilde{T}$,
so $b$ must be the incenter of $\tilde{T}$.
\end{proof}

\begin{corollary}
\label{cor:length-of-saddle-connection-bounded-at-balance-point}
Let $T$ be a Euclidean triangle, and let $b$ be the balance point of $T$. Because the area of $f_b(T)$ is bounded above by $A$ (\cref{lem:euclidean-triangle-bounded-area}), each side of $f_b(T)$ has length at most $2\sqrt{A}$.
\end{corollary}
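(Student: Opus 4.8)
The plan is to unwind the definition of the balance point and then apply one elementary estimate, so there is really nothing to do beyond bookkeeping. By definition of the balance point, $f_b(T)$ is an equilateral triangle in the fiber $E_b \cong \tilde S$. Since $f_b$ is affine with respect to the flat metrics $q_0$ and $q_b$ (see \cref{sec:total-space-E}), it carries the three saddle connections comprising $T$ to three saddle connections of the flat structure $q_b$ and preserves nondegeneracy; hence $f_b(T)$ descends to a Euclidean triangle in $(S,b,q_b)$ in the sense used in \cref{lem:euclidean-triangle-bounded-area}.

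Next I would apply the no large triangles condition (\cref{lem:euclidean-triangle-bounded-area}) with $X = b$, which bounds the area of $f_b(T)$ by $A$. Finally I would invoke the elementary fact that an equilateral triangle of side length $s$ has area $\frac{\sqrt 3}{4} s^2 \ge \frac14 s^2$; combining this with the area bound gives $\frac14 s^2 \le A$, so the common side length of $f_b(T)$ satisfies $s \le 2\sqrt A$, as claimed. (The exact area formula would give the sharper bound $2\sqrt{A}/3^{1/4}$, but the crude estimate using $\sqrt 3 \ge 1$ already suffices and matches the statement.)

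The argument is entirely routine; the only step requiring any attention is the first one — verifying that the affine image of a Euclidean triangle is again a Euclidean triangle in the flat surface over $b$, so that \cref{lem:euclidean-triangle-bounded-area} is genuinely applicable there and not merely over $X_0$. Beyond that there is no obstacle.
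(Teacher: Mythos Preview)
Your proposal is correct and matches the paper's reasoning exactly: the paper treats this corollary as self-evident, embedding the entire argument in the statement itself (area bounded by $A$ via \cref{lem:euclidean-triangle-bounded-area}, triangle equilateral at $b$ by definition, hence side $\le 2\sqrt{A}$). Your added verification that the affine image $f_b(T)$ is again a Euclidean triangle in the fiber over $b$ is a reasonable clarification the paper leaves implicit.
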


For many of the following proofs it is important to recall from \cref{sec:basics} that given a saddle connection $\sigma$ with direction $\dir{\sigma} \in \partial D$, the horoballs based at $\dir{\sigma} \in D$ are sublevel sets for the length of $\sigma$, where the length of $\sigma$ shrinks exponentially along any geodesic ending at $\dir{\sigma}$.
The following technical lemma states this more precisely.

Throughout \cref{lem:bounded-ideal-triangle-edges} and \cref{lem:ideal-triangles-project-to-bounded-diameter-subsets}, a geodesic between $\dir{\sigma_x}$ and $\dir{\sigma_y}$ is denoted $\geod{\dir{\sigma_x}}{\dir{\sigma_y}}$.
    \nomenclature[(sigma,sigma)]{$\geod{\dir{\sigma_x}}{\dir{\sigma_y}}$}{hyperbolic geodesic segment between $\dir{\sigma_x}$ and $\dir{\sigma_y}$}

\begin{lemma}
\label{lem:bounded-ideal-triangle-edges}
Let $T = \Delta(x,y,z)$ be a Euclidean triangle with sides consisting of saddle connections $\sigma_x$, $\sigma_y$, and $\sigma_z$. Then for any segment $c \subset \geod{\dir{\sigma_x}}{\dir{\sigma_y}}$ of length $L > 0$,
$$ \min_{t \in c} \left(\min\left\{\length{f_t(\sigma_x)},\length{f_t(\sigma_y)}\right\} \right) \leq 2 \sqrt{3 A}e^{-L/2}, $$
where $A$ is the constant from \cref{lem:euclidean-triangle-bounded-area}.
\end{lemma}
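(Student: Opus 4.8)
The plan is to reduce the statement to a standard fact about how saddle connection lengths decay along hyperbolic geodesics ending at their associated direction, then to combine this with the area bound for Euclidean triangles from \cref{lem:euclidean-triangle-bounded-area}. First I would set up the right comparison: let $b$ be the balance point of $T$, and recall from \cref{rmk:balance-pt-is-incenter} that $b$ is the incenter of the ideal triangle $\tilde{T}$, so in particular $b$ lies within bounded distance of the edge $\geod{\dir{\sigma_x}}{\dir{\sigma_y}}$ (the inradius of an ideal triangle is a fixed constant). By \cref{cor:length-of-saddle-connection-bounded-at-balance-point}, both $\length{f_b(\sigma_x)}$ and $\length{f_b(\sigma_y)}$ are at most $2\sqrt{A}$. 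Letting $b'$ be the closest-point projection of $b$ onto the geodesic $\geod{\dir{\sigma_x}}{\dir{\sigma_y}}$, the map $f_{b',b}$ is $e^{\rho(b,b')}$-bilipschitz, and $\rho(b,b')$ is bounded by the ideal-triangle inradius, so $\length{f_{b'}(\sigma_x)}$ and $\length{f_{b'}(\sigma_y)}$ are at most $2\sqrt{A}$ times a universal constant; I would track the constants to land at the bound $2\sqrt{3A}$ for the length at $b'$ (the factor $\sqrt{3}$ being exactly the bilipschitz slack coming from the inradius of an ideal triangle).

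The second ingredient is the exponential decay. Along the geodesic $\geod{\dir{\sigma_x}}{\dir{\sigma_y}}$, moving a distance $s$ in the direction of the endpoint $\dir{\sigma_x}$ scales $\length{f_t(\sigma_x)}$ by $e^{-s}$ (this is precisely the statement from \cref{sec:teichmueller-space} that horocycles based at $\dir{\sigma_x}$ are level sets for $\length{\sigma_x}$, with exponential decay along rays to $\dir{\sigma_x}$), and symmetrically for $\sigma_y$ as one moves toward $\dir{\sigma_y}$. Now given a segment $c \subset \geod{\dir{\sigma_x}}{\dir{\sigma_y}}$ of length $L$, I would consider its midpoint $t_0$. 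One of the two endpoints of $c$ lies at distance $\geq L/2$ from $t_0$ in the direction of $\dir{\sigma_x}$ \emph{or} $\dir{\sigma_y}$; more carefully, from any point $t_0 \in c$ at least one of the following holds: $t_0$ is at distance $\geq L/2$ along the ray toward $\dir{\sigma_x}$ from the near endpoint of $c$, or toward $\dir{\sigma_y}$. The cleanest route: pick $t_0 = b'$ if $b' \in c$; otherwise $b'$ lies on one side of $c$, say on the $\dir{\sigma_y}$-side, in which case every point of $c$ is at least as far toward $\dir{\sigma_x}$ from $b'$, and the endpoint of $c$ nearer $\dir{\sigma_x}$ is at distance $\geq L$ from $b'$ in that direction. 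In every case one produces a point $t \in c$ with either $\length{f_t(\sigma_x)} \leq \length{f_{b'}(\sigma_x)} e^{-L/2} \leq 2\sqrt{3A}\,e^{-L/2}$ or the analogous bound for $\sigma_y$, which is exactly the claimed inequality.

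The main obstacle is bookkeeping rather than conceptual: making the reduction to the geodesic $\geod{\dir{\sigma_x}}{\dir{\sigma_y}}$ precise enough that the constant comes out as $2\sqrt{3A}$ and not something worse, and handling the case distinction on where $b'$ sits relative to the segment $c$ cleanly. The only genuinely mathematical input beyond elementary hyperbolic geometry is the area bound \cref{lem:euclidean-triangle-bounded-area} together with \cref{cor:length-of-saddle-connection-bounded-at-balance-point}; everything else is the standard dictionary between the Teichm\"uller disk picture and lengths of saddle connections, already assembled in \cref{sec:teichmueller-space}. I would also double-check that the bilipschitz estimate for $f_{b',b}$ is applied in the correct direction (lengths measured downstairs in $E_0$ versus in the fiber over $b'$) so that no inverse of a constant sneaks in and weakens the bound.
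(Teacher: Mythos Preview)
Your proposal is correct and follows essentially the same route as the paper. Your point $b'$ (the closest-point projection of the incenter onto the side $\geod{\dir{\sigma_x}}{\dir{\sigma_y}}$) is exactly the paper's point $t'$ (the tangency point of the incircle on that side), the inradius $\ln\sqrt{3}$ gives the bilipschitz factor $\sqrt{3}$ producing the bound $2\sqrt{3A}$ at that point, and the exponential decay plus the observation that some endpoint of $c$ lies at distance $\geq L/2$ from $b'=t'$ finishes it. Your case analysis on whether $b' \in c$ is simply a more explicit version of the paper's one-line ``one of the endpoints has distance at least $L/2$ from $t'$''.
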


\begin{proof}
At the balance point $b$ for $T$, the lengths of $f_b(\sigma_x)$ and $f_b(\sigma_y)$ are at most $2\sqrt{A}$ (\cref{cor:length-of-saddle-connection-bounded-at-balance-point}).
Denote by $t'$ the point of intersection of $\geod{\dir{\sigma_x}}{\dir{\sigma_y}}$ with the inscribed circle of $\Tilde{T}$.
The distance between $b$ and $t'$ is $\ln{\sqrt{3}}$ (as this is the inradius of an ideal triangle---see \cref{rmk:balance-pt-is-incenter}), so the lengths of $f_{t'}(\sigma_x)$ and $f_{t'}(\sigma_y)$ are at most $2\sqrt{A} e^{\ln{\sqrt{3}}} = 2 \sqrt{3A}$. 
Traversing $\geod{\dir{\sigma_x}}{\dir{\sigma_y}}$ towards $\dir{\sigma_x}$ (or towards $\dir{\sigma_y}$) shrinks $\length{\sigma_x}$ (or $\length{\sigma_y}$) exponentially as a function of the distance traversed.
Since one of the endpoints has distance at least $L/2$ from $t'$, it follows that $\min\left(\length{\sigma_x},\length{\sigma_y}\right) \leq 2\sqrt{3A} e^{-L/2}$ on $c$.
\end{proof}

\begin{lemma}
\label{lem:ideal-triangles-project-to-bounded-diameter-subsets}
Let $T = \Delta(x,y,z)$ be a Euclidean triangle consisting of saddle connections $\sigma_x$, $\sigma_y$, and $\sigma_z$; let $\tilde{T} \subset D $ be the ideal triangle associated to directions $\dir{\sigma_x}$, $\dir{\sigma_y}$, and $\dir{\sigma_z}$; and let $\DtoDbar(\tilde{T})$ denote the closest-point projection of $\tilde{T}$ onto $\overline{D}$ (where this map is the same as in \cref{sec:construction}).
Then the perimeter of $\DtoDbar(\tilde T)$ is uniformly bounded.
As a consequence, the diameter of $\DtoDbar(\tilde{T})$ is uniformly bounded.
\end{lemma}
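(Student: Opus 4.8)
The plan is to decompose $\tilde T$ according to its position relative to $\overline D$ and bound how much each piece contributes to the perimeter of $\DtoDbar(\tilde T)$. Recall that $\DtoDbar$ fixes $\overline D$ pointwise, radially retracts each horoball $B_\alpha$ onto its bounding horocycle $\partial B_\alpha$, and closest-point-projects $D \setminus \hull{G}$ onto $\partial\hull{G}$ (using that the horoballs are $1$-separated and lie at distance at least $1$ from $\partial\hull{G}$, by \cref{def:horoballs}). Thus $\DtoDbar(\tilde T)$ is the union of $\tilde T \cap \overline D$, finitely many horocyclic arcs (images of the parts of $\tilde T$ lying in horoballs), and finitely many subsegments of the boundary geodesics of $\hull{G}$ (images of the parts of $\tilde T$ lying outside $\hull{G}$); its perimeter is controlled by the total length of these pieces together with the arcs of $\partial\overline D$ that they meet.

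The key new estimate bounds $\partial\tilde T \cap \overline D$. Fix an edge $e = \geod{\dir{\sigma_x}}{\dir{\sigma_y}}$ of $\tilde T$, parametrized by arc length so that $e(s) \to \dir{\sigma_x}$ as $s \to -\infty$. Since a saddle connection's length decays exponentially along a geodesic into its direction, $\length{f_{e(s)}(\sigma_x)} = C_x e^{s}$ and $\length{f_{e(s)}(\sigma_y)} = C_y e^{-s}$ for constants $C_x, C_y > 0$; evaluating at the point where the inscribed circle of $\tilde T$ meets $e$, where both lengths are at most $2\sqrt{3A}$ (as in the proof of \cref{lem:bounded-ideal-triangle-edges}), gives $C_x C_y \le 12A$, with $A$ the constant of \cref{lem:euclidean-triangle-bounded-area}. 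On the other hand, if $e(s) \in \overline D$ then \cref{cor:no-short-saddle-connections-in-truncated-hull} forces $\length{f_{e(s)}(\sigma_x)}$ and $\length{f_{e(s)}(\sigma_y)}$ to be at least the uniform lower bound $\varepsilon_0 > 0$ from that proposition. Combining the two, $e(s) \in \overline D$ implies $s$ lies in an interval of length at most $L_0 := \ln\!\big(12A/\varepsilon_0^{2}\big)$, so each of the three edges meets $\overline D$ in a subsegment of length at most $L_0$, and $\partial\tilde T \cap \overline D$ has length at most $3L_0$.

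Next I would treat the parts of $e$ outside $\overline D$. Any maximal subsegment of $e$ crossing a horoball $B_\beta$ with $\beta \notin \{\dir{\sigma_x}, \dir{\sigma_y}\}$ has both endpoints on $\partial B_\beta \subseteq \overline D$, hence both in the length-$L_0$ interval above, so the crossing has length at most $L_0$; by $1$-separation there are at most $L_0 + 1$ such crossings on $e$, and each projects to a horocyclic arc of length $2\sinh$ of half the crossing length, hence at most $2\sinh(L_0/2)$. Beyond its $\overline D$-portion, $e$ heads toward $\dir{\sigma_x}$ (resp. $\dir{\sigma_y}$); by \cref{cor:to-generalized-veech-dichotomy} this endpoint is parabolic or lies outside $\Lambda(G)$, and using the separation properties of the horoballs one checks that in the first case the tail lies entirely in $B_{\dir{\sigma_x}}$ and projects to the single point $e \cap \partial B_{\dir{\sigma_x}}$, while in the second case it lies entirely outside $\hull{G}$ and projects onto the boundary geodesic $g_x$ of $\hull{G}$ facing $\dir{\sigma_x}$, namely onto the segment from the exit point $q_x$ of $e$ to the foot $m_x = B_{\dir{\sigma_x}}$ of the perpendicular from $\dir{\sigma_x}$ to $g_x$. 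Along $g_x$ one has $\length{f_p(\sigma_x)} = \length{f_{m_x}(\sigma_x)}\cosh\rho(p,m_x)$; since $\length{f_{m_x}(\sigma_x)} \ge \varepsilon_0$ and $\length{f_{q_x}(\sigma_x)} \le 12A/\varepsilon_0$ (the latter because $\length{f_{q_x}(\sigma_y)} \ge \varepsilon_0$ and $C_x C_y \le 12A$), the segment $[q_x, m_x]$ has length at most $\mathrm{arccosh}\!\big(12A/\varepsilon_0^{2}\big)$. Finally, the part of $\tilde T$ lying inside a horoball $B_\alpha$ projects onto an arc of $\partial B_\alpha$ whose length is the "width" of $\tilde T \cap B_\alpha$ seen from the center $\alpha$; computing this in the upper half-plane model with $\alpha = \infty$ (when $\alpha$ is a vertex of $\tilde T$ it equals a multiple of $\length{f_b(\sigma)}/\length{f_p(\sigma)}$ for an appropriate side $\sigma$, a point $p \in \partial B_\alpha$, and the balance point $b = \adj{b}$), \cref{cor:length-of-saddle-connection-bounded-at-balance-point} and \cref{cor:no-short-saddle-connections-in-truncated-hull} bound it by a multiple of $\sqrt{A}/\varepsilon_0$; the part of $\tilde T$ outside $\hull{G}$ is bounded by subarcs of $\partial\hull{G}$ and subsegments of the edges already controlled. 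Summing gives a bound on the perimeter depending only on $A$ and $\varepsilon_0$, and the diameter bound follows since $\DtoDbar(\tilde T)$ is connected.

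I expect the main obstacle to be the bookkeeping in the last step: cleanly cutting $\DtoDbar(\tilde T)$ into pieces bounded by edge-projections and horocyclic or geodesic arcs, and checking in each configuration (vertex versus non-vertex horoball crossings, edges with both endpoints in the same gap of $\Lambda(G)$, the boundary arcs of the two-dimensional "spillover" of $\tilde T$) that the relevant arc is short. By contrast, the one genuinely new inequality—the length bound for $\partial\tilde T \cap \overline D$ in the second paragraph—is a short computation once \cref{lem:bounded-ideal-triangle-edges} and \cref{cor:no-short-saddle-connections-in-truncated-hull} are in hand, since it reduces everything to comparing the three scales $\varepsilon_0$, $2\sqrt{A}$, and $2\sqrt{3A}$.
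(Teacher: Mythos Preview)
Your proposal is correct and follows essentially the same strategy as the paper: decompose $\tilde T$ according to its position relative to $\overline D$ and bound each piece by playing the upper bound on saddle-connection lengths near the balance point (\cref{lem:bounded-ideal-triangle-edges}, \cref{cor:length-of-saddle-connection-bounded-at-balance-point}) against the uniform lower bound inside $\overline D$ (\cref{cor:no-short-saddle-connections-in-truncated-hull}). Your bookkeeping is more explicit than the paper's---you treat non-vertex horoball crossings separately and use a direct $\cosh$ computation along the boundary geodesic for the nonparabolic tails, whereas the paper handles the complement of $\hull{G}$ by splitting into the $1$-neighborhood of $\partial\hull{G}$ (where it reapplies \cref{lem:bounded-ideal-triangle-edges}) and its complement (where closest-point projection contracts)---but the content is the same.
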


\begin{proof}
Denote the balance point for $\tilde{T}$ by $b$ and recall that the lengths of $\sigma_x$, $\sigma_y$, and $\sigma_z$ are all uniformly bounded at $b$.
They are also uniformly bounded at the point closest to $b$ on the side of $\tilde{T}$ opposite (WLOG) $\dir{\sigma_x}$ (as in the proof of \cref{lem:bounded-ideal-triangle-edges}).
Then, traversing the geodesic from this point toward (WLOG) $\dir{\sigma_y}$, $\ell(\sigma_y) \to 0$.
Because there are no short saddle connections in $\overline{D}$ (\cref{cor:no-short-saddle-connections-in-truncated-hull}), there is a bound on the length of the intersection of each side of $\tilde{T}$ with $\overline{D}$.

If $\dir{\sigma}$ is a parabolic vertex of $\tilde{T}$, then $\DtoDbar(\tilde{T} \cap B_{\dir{\sigma}}) = \tilde{T} \cap \partial B_{\dir{\sigma}}$, which has uniformly bounded length due to the choice of horoballs (see \cref{def:horoballs}).

If the distance between $\tilde{T}$ and $\hull{G}$ is at least $1$, then the projection of $\tilde{T}$ onto any component of $\partial\hull{G}$ is bounded.
More generally, any component of $\tilde{T}$ which lies outside of the $1$-neighborhood of $\hull{G}$ in $D$ projects to a bounded subsegment of $\partial \hull{G}$. 

It only remains to consider the portions of $\tilde{T}$ contained in the $1$-neighborhood of $\hull{G}$.
By convexity of $\hull{G}$, the intersection of an side of $\tilde{T}$ with the $1$-neighborhood of $\hull{G}$ consists of at most one component.
Fix some $L>0$ and assume (without loss of generality) that there is such a component $c \subset \geod{\dir{\sigma_x}}{\dir{\sigma_y}}$ of length $L>0$ contained in the $1$-neighborhood of a component of $\partial \hull{G}$.
Then the lengths of $\sigma_x$ and $\sigma_y$ in the fibers over points of $\DtoDbar(c)$ are bounded by $2\sqrt{3A} e^{-L/2 + 1}$ (\cref{lem:bounded-ideal-triangle-edges}). Again, because there are no short saddle connections in $\overline{D}$ (\cref{cor:no-short-saddle-connections-in-truncated-hull}), this implies that the projection of $c$ to $\partial \hull{G}$ has bounded length.

Because each of these cases can occur at most three times for any $\tilde{T}$, there is a bound on the perimeter of $\DtoDbar(\tilde T) \cap \overline{D}$. Therefore the diameter of $\DtoDbar(\tilde T) \cap \overline{D}$ is also bounded.
\end{proof}

The next lemma shows that for any ideal triangle that intersects $\hull{G}$, the balance point and adjusted balance point are close to each other, to the horopoints associated to any nonparabolic vertices, and to the horoballs associated to any parabolic vertices. This statement functions similarly to \cite[Lemma 4.10]{ddls-extensions}, which is a consequence of \cite{vorobets-planar-structures}; both assume in addition that $G$ is a lattice, which is not necessary for the result below.

\begin{lemma}
\label{lem:about-balance-points}
Let $T = \Delta(x,y,z)$ be a Euclidean triangle consisting of saddle connections $\sigma_x$, $\sigma_y$, and $\sigma_z$ so that $\tilde{T} \cap \hull{G} \neq \emptyset$. 
Then the balance point $b$ of $\tilde{T}$, the adjusted balance point $\adj{b}$ of $\tilde{T}$, the horoballs associated to any parabolic vertices of $\tilde{T}$, and the horopoints associated to any nonparabolic vertices of $\tilde{T}$ are all uniformly close to one another.
\end{lemma}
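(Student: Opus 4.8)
The plan is to prove the (equivalent, slightly stronger) statement that each of the listed objects lies within a universal distance of the balance point $b$; the conclusion then follows from the triangle inequality. Recall from \cref{rmk:balance-pt-is-incenter} that $b$ is the incenter of $\tilde T$, hence lies within $\ln\sqrt{3}$ (the inradius of an ideal triangle) of each side of $\tilde T$, that $\length{f_b(\sigma_x)},\length{f_b(\sigma_y)},\length{f_b(\sigma_z)}\le 2\sqrt{A}$ (\cref{cor:length-of-saddle-connection-bounded-at-balance-point}), and that the same lengths are at most $2\sqrt{3A}$ at the three points where the sides of $\tilde T$ meet its inscribed circle (this is exactly the estimate used in \cref{lem:bounded-ideal-triangle-edges}). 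The two analytic inputs I will use repeatedly are: (i) along a side $\geod{\dir{\sigma_x}}{\dir{\sigma_y}}$ of $\tilde T$ the length of $\sigma_x$ decreases \emph{exactly} exponentially toward the vertex $\dir{\sigma_x}$, since that side is the orbit of the one-parameter group whose contracting and expanding eigendirections are $\dir{\sigma_x}$ and $\dir{\sigma_y}$; and (ii) the length of every saddle connection is bounded below by a universal $\varepsilon_0>0$ over $\overline{D}$ (\cref{cor:no-short-saddle-connections-in-truncated-hull}), while the horoball $B_\alpha$ attached to a parabolic direction is a sublevel set $\{\length{\sigma_\alpha}\le\ell_\alpha\}$ with $\ell_\alpha$ ranging over a finite set of positive values (finitely many $G$-orbits of such horoballs).

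First I would find a point of $\tilde T$ that is close to $b$ and lies in $\overline{D}$. The hypothesis $\tilde T\cap\hull{G}\neq\emptyset$ already forces $\tilde T\cap\overline{D}\neq\emptyset$: the removed horoball interiors are pairwise $1$-separated and $1$-separated from $\partial\hull{G}$, so a connected subset of $\hull{G}$ avoiding $\overline{D}$ would lie in a single $B^\circ_\delta$, which is impossible for an ideal triangle, since if $\tilde T$ meets $B^\circ_\delta$ at all then one of its sides crosses the horocycle $\partial B_\delta\subset\overline{D}$. So pick $p\in\tilde T\cap\overline{D}$; it lies within $\ln\sqrt{3}$ of some side, say $\geod{\dir{\sigma_x}}{\dir{\sigma_y}}$, at a foot point $p'$ which we may assume lies on the $\dir{\sigma_x}$-ward ray from the tangent point $t'$. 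Then input (i) gives $\length{f_{p'}(\sigma_x)}=\length{f_{t'}(\sigma_x)}\,e^{-\rho(t',p')}\le 2\sqrt{3A}\,e^{-\rho(t',p')}$, and since $f_{p,p'}$ is $e^{\rho(p',p)}\le\sqrt{3}$-bilipschitz this yields $\length{f_p(\sigma_x)}\le 6\sqrt{A}\,e^{-\rho(t',p')}$; but $p\in\overline{D}$ forces $\length{f_p(\sigma_x)}\ge\varepsilon_0$ by input (ii), so $\rho(t',p')\le\ln(6\sqrt{A}/\varepsilon_0)$. Combining with $\rho(b,t')\le\ln\sqrt{3}$ and $\rho(p',p)\le\ln\sqrt{3}$ bounds $\rho(b,p)$ by a universal $C_1$; in particular $\rho(b,\overline{D})\le C_1$, hence $\rho(b,\adj{b})=\rho(b,\hull{G})\le C_1$ and $\adj{b}$ lies within $2C_1$ of $p$.

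Next I would treat the horopoints. If $\alpha=\dir{\sigma_x}$ is a nonparabolic vertex of $\tilde T$, then $\alpha\notin\Lambda(G)$ by \cref{cor:to-generalized-veech-dichotomy}, so $\alpha$ is not in the ideal closure of $\hull{G}$; taking a sequence $p_n\in\tilde T$ running out the $\alpha$-prong to $\alpha$, for large $n$ we have $p_n\notin\hull{G}$, and for such $p_n$ the $\rho$-closest point of $\overline{D}$ coincides with $\mathrm{proj}_{\partial\hull{G}}(p_n)$ (any point of $\overline{D}$ lies in $\hull{G}$, so nothing beats that closest boundary point, which lies in $\partial\hull{G}\subset\overline{D}$). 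By continuity of nearest-point projection onto the closed convex set $\hull{G}$, $\DtoDbar(p_n)=\mathrm{proj}_{\partial\hull{G}}(p_n)\to\mathrm{proj}_{\partial\hull{G}}(\alpha)=B_\alpha$. Since $b\in\tilde T$ we also have $\DtoDbar(b)\in\DtoDbar(\tilde T)$, and \cref{lem:ideal-triangles-project-to-bounded-diameter-subsets} bounds the diameter of $\DtoDbar(\tilde T)$ (hence of its closure) by a universal $C_2$; therefore $\rho(B_\alpha,\DtoDbar(b))\le C_2$ and $\rho(b,B_\alpha)\le\rho(b,\overline{D})+C_2\le C_1+C_2$.

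Finally the horoballs and the conclusion. If $\alpha$ is a parabolic vertex of $\tilde T$, then $B_\alpha\subset\hull{G}$ (it was chosen at distance at least $1$ from $\partial\hull{G}$), and a side of $\tilde T$ incident to $\alpha$, being a geodesic ending at $\alpha$, crosses the horocycle $\partial B_\alpha$ at a single point $q_\alpha$ where $\length{\sigma_\alpha}$ takes its constant horocyclic value $\ell_\alpha\in[c_1,c_2]$ with $c_1>0$ universal. If $\ell_\alpha\ge\length{f_{t'}(\sigma_\alpha)}$ then $t'\in B_\alpha$ and $\rho(b,B_\alpha)\le\ln\sqrt{3}$; otherwise $q_\alpha$ lies on the $\alpha$-ward ray from $t'$ and input (i) gives $\rho(t',q_\alpha)=\ln(\length{f_{t'}(\sigma_\alpha)}/\ell_\alpha)\le\ln(2\sqrt{3A}/c_1)$, so $\rho(b,B_\alpha)\le\ln\sqrt{3}+\ln(2\sqrt{3A}/c_1)=:C_3$. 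Taking the maximum of $C_1$, $C_1+C_2$, and $C_3$ bounds the distance from $b$ to $\adj{b}$, to each nonparabolic horopoint, and to each parabolic horoball, and one more triangle inequality gives the claimed uniform mutual closeness. I expect the only genuinely delicate points to be the reduction $\tilde T\cap\hull{G}\neq\emptyset\Rightarrow\tilde T\cap\overline{D}\neq\emptyset$ and the case analysis of how $\tilde T$ can meet $\hull{G}$ (through its core, through a prong, or through a removed horoball based at a non-vertex); the quantitative heart is just the exponential length estimate of \cref{lem:bounded-ideal-triangle-edges} together with \cref{cor:no-short-saddle-connections-in-truncated-hull}, both already available.
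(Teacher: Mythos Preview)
Your proof is correct and complete in substance; the route differs from the paper's in organization and in one of the three steps.

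\textbf{Where you agree with the paper.} For the nonparabolic horopoints, both you and the paper invoke \cref{lem:ideal-triangles-project-to-bounded-diameter-subsets}: the horopoint $B_\alpha$ lies in the closure of $\DtoDbar(\tilde T)$, whose diameter is bounded. Your limit argument with $p_n \to \alpha$ makes this explicit.

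\textbf{Where you differ.} For the distance between $b$ and $\adj{b}$, the paper splits into cases ($b \in \hull{G}$ versus $b \notin \hull{G}$) and in the latter case chooses a vertex $\dir{\sigma}$ separated from $b$ by $\hull{G}$, then bounds the length of the geodesic segment from $b$ to the crossing point with $\partial\hull{G}$ via the length estimate. You instead first argue $\tilde T \cap \overline{D} \neq \emptyset$, pick any $p$ in that intersection, and bound $\rho(b,p)$ directly by projecting $p$ to a side and using the exponential length estimate together with \cref{cor:no-short-saddle-connections-in-truncated-hull}. This is a genuinely different (and arguably cleaner) reduction: you never need to locate $b$ relative to $\hull{G}$. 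For the parabolic horoballs, the paper's argument is terse (``$\adj{b}$ is in some uniformly thick part of $\hull{G}$''), while you give an explicit computation along a side of $\tilde T$ using the same exponential estimate and the lower length bound on $\partial B_\alpha \subset \overline{D}$. Your approach has the virtue of making all three parts rest on a single mechanism: length estimates along the sides of $\tilde T$ against the lower bound $\varepsilon_0$.

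\textbf{Two cosmetic points.} In your reduction $\tilde T \cap \hull{G} \neq \emptyset \Rightarrow \tilde T \cap \overline{D} \neq \emptyset$, it need not be a \emph{side} of $\tilde T$ that meets $\partial B_\delta$; the correct statement is simply that $\tilde T$ (being connected, meeting $B^\circ_\delta$, and not contained in $B_\delta$ since an ideal triangle has three ideal vertices while a horoball has one) must meet $\partial B_\delta \subset \overline{D}$. Second, your claim that $\ell_\alpha$ ranges over a finite set is not quite right, since $\ell_\alpha$ depends on the particular saddle connection $\sigma_\alpha$ (not just on the horoball), and there are infinitely many saddle connections in direction $\alpha$; but you only ever use the lower bound $\ell_\alpha \geq \varepsilon_0$, which follows from $\partial B_\alpha \subset \overline{D}$ and \cref{cor:no-short-saddle-connections-in-truncated-hull}, so the argument is unaffected.
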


\begin{proof}
Because the projection of $\tilde{T}$ to $\hull{G}$ has bounded diameter (\cref{lem:ideal-triangles-project-to-bounded-diameter-subsets}), the adjusted balance point $\adj{b}$ and the horopoints associated to any nonparabolic vertices of $\tilde{T}$ must be uniformly close.
% Recall that during construction the horoballs $B_{\dir{\sigma}}$ were chosen so that on the interior the length of $\sigma$ is no more than half the length of a saddle connection in any direction other than $\dir{\sigma}$. Therefore the balance point $b$ cannot be in any horoball.
% (This also proves that the adjusted balance point $\adj{b}$ cannot be in any horoball, since either $b = \adj{b}$ or $\adj{b} \in \partial \hull{G}$.)
% Since $G$ acts cocompactly on the truncated hull $\overline{D}$, $\adj{b}$ must be in some uniformly thick part of $\hull{G}$. 
Also, the adjusted balance point $\adj{b}$ must be uniformly close to the horoballs associated to any parabolic vertices of $\tilde{T}$ because $G$ acts cocompactly on the truncated hull $\overline{D}$, meaning that $\adj{b}$ is in some uniformly thick part of $\hull{G}$.
Lastly, if $b \in \hull{G}$, then $b \in \overline{D}$ (by the choice of horoballs made during construction; see \cref{def:horoballs}) and therefore $b = \adj{b}$.

It only remains to show that $b$ and $\adj{b}$ are uniformly close even when $b$ lies outside of $\hull{G}$.
Choose a saddle connection $\sigma$ in $T$ for which $\dir{\sigma} \in \partial D$ and $b$ are in different components of the closure of $D - \hull{G}$. 
Denote by $c$ the geodesic from $b$ to $\dir{\sigma}$. 
The length of $\sigma$ decreases along $c$ as it approaches $\dir{\sigma}$. 
The length of $\sigma$ is bounded above at $b$ (\cref{cor:length-of-saddle-connection-bounded-at-balance-point}) and bounded below at a point $t \in c \cap \partial \hull{G}$ (\cref{cor:no-short-saddle-connections-in-truncated-hull}).
Therefore the subsegment of $c$ between $b$ and $t$ has uniformly bounded length.
Since $\adj{b}$ is the closest point projection of $b$ to $\partial \hull{G}$, the distance between $b$ and $\adj{b}$ is no greater than the distance between $b$ and $t$, so $b$ and $\adj{b}$ are uniformly close.
\end{proof}

\begin{corollary}
\label{cor:intersecting-hull-implies-bounded-nonparabolic-saddle-connections}
Let $T = \Delta(x,y,z)$ be a Euclidean triangle consisting of saddle connections $\sigma_x$, $\sigma_y$, and $\sigma_z$ so that $\tilde{T} \cap \hull{G} \neq \emptyset$. 
If $\sigma_x$ is nonparabolic, then it has uniformly bounded length at its associated horopoint. That is, the length of
$$ f_{X_{\dir{\sigma_x}}}(\sigma_x) $$
is uniformly bounded.
\end{corollary}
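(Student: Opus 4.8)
The plan is to chain together three facts already established: the length bound at the balance point (\cref{cor:length-of-saddle-connection-bounded-at-balance-point}), the uniform closeness of the balance point to the horopoint (\cref{lem:about-balance-points}), and the fact that the maps between fibers are controlled bilipschitz (from \cref{sec:total-space-E}). Since $\sigma_x$ is nonparabolic, its direction $\dir{\sigma_x}$ is a nonparabolic vertex of $\tilde{T}$, and by \cref{def:horoballs} the point $X_{\dir{\sigma_x}}$ is exactly the horopoint $B_{\dir{\sigma_x}}$.

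First I would invoke \cref{cor:length-of-saddle-connection-bounded-at-balance-point} to conclude that, at the balance point $b$ of $T$, the length $\length{f_b(\sigma_x)}$ is at most $2\sqrt{A}$, where $A$ is the area bound from \cref{lem:euclidean-triangle-bounded-area}. Next, using the hypothesis $\tilde{T} \cap \hull{G} \neq \emptyset$, I would apply \cref{lem:about-balance-points}: it gives a uniform constant $C_0$ (independent of $x,y,z$) with $\rho(b, X_{\dir{\sigma_x}}) \leq C_0$, since the horopoint associated to the nonparabolic vertex $\dir{\sigma_x}$ is one of the objects that lemma asserts to be uniformly close to $b$. Finally, recall that $f_{X_{\dir{\sigma_x}},b}\colon E_b \to E_{X_{\dir{\sigma_x}}}$ is affine with respect to the flat metrics and $e^{\rho(b, X_{\dir{\sigma_x}})}$-bilipschitz, and that $f_{X_{\dir{\sigma_x}}}(\sigma_x) = f_{X_{\dir{\sigma_x}},b}\bigl(f_b(\sigma_x)\bigr)$ because the fiber maps compose. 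Hence
$$ \length{f_{X_{\dir{\sigma_x}}}(\sigma_x)} \;\leq\; e^{\rho(b, X_{\dir{\sigma_x}})}\,\length{f_b(\sigma_x)} \;\leq\; 2\sqrt{A}\,e^{C_0}, $$
a bound depending only on $A$ and $C_0$, hence uniform.

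There is no substantive obstacle here: this is a direct corollary of \cref{lem:about-balance-points}. The only points that need care are bookkeeping ones—verifying that the hypothesis $\tilde{T}\cap\hull{G}\neq\emptyset$ is precisely what licenses the application of \cref{lem:about-balance-points}, and that for a nonparabolic saddle connection $\sigma_x$ the object appearing in that lemma really is the horopoint $X_{\dir{\sigma_x}}=B_{\dir{\sigma_x}}$ rather than a horoball.
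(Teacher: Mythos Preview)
Your proof is correct and matches the paper's intended argument: the corollary is stated without proof immediately after \cref{lem:about-balance-points}, precisely because it follows by chaining the length bound at the balance point (\cref{cor:length-of-saddle-connection-bounded-at-balance-point}), the uniform closeness of $b$ to the horopoint $X_{\dir{\sigma_x}}$ (\cref{lem:about-balance-points}), and the $e^{\rho}$-bilipschitz property of the fiber maps, exactly as you wrote.
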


The next definition comes from {\cite[Section 4.6]{ddls-extensions}}. The following corollary is comparable to {\cite[Corollary 4.11]{ddls-extensions}}, which is given a new proof.

\begin{definition}
Two saddle connections in a common fiber \termttt{span a triangle} if they share an endpoint and the geodesic joining their other endpoints is a single (possibly degenerate) saddle connection. 
Write $\calP(\sigma)$ for the set of saddle connections that span a triangle with $\sigma$. 
\nomenclature[P(sigma)]{$\calP(\sigma)$}{set of saddle connections that span a triangle with $\sigma$}
Denote
$$ B(\sigma) = \bigcup_{\sigma' \in \calP(\sigma)} B_{\dir{\sigma'}} $$
for the union of horoballs or horopoints associated to the saddle connections that span a triangle with $\sigma$. Taking preimages of the horoballs or horopoints, denote 
$$\calB(\sigma) = \bigcup_{\sigma' \in \calP(\sigma)} \calB_{\dir{\sigma'}}.$$
\end{definition}

\begin{corollary}
For any $\sigma' \in \calP(\sigma)$, the sets $B_{\dir{\sigma}}$ and $B_{\dir{\sigma'}}$ are uniformly close in $D$. 
\end{corollary}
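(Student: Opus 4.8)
The plan is to exhibit $\sigma$ and $\sigma'$ as two sides of a Euclidean triangle and then to place both associated horoballs/horopoints inside the uniformly bounded set produced by \cref{lem:ideal-triangles-project-to-bounded-diameter-subsets}. If $\sigma'=\sigma$ there is nothing to show, so assume $\sigma'\neq\sigma$ and let $\sigma''$ be the saddle connection joining the two endpoints of $\sigma$ and $\sigma'$ that are not shared. Since every fiber $E_X$ is a copy of the uniquely geodesic space $\tilde{S}$, two distinct saddle connections cannot have the same pair of endpoints, so $\sigma''$ is nondegenerate; a short argument (using that a saddle connection has no cone point in its interior) then shows that $\sigma,\sigma',\sigma''$ bound a nondegenerate Euclidean triangle $T$ with three pairwise distinct directions $\dir{\sigma},\dir{\sigma'},\dir{\sigma''}$. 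Let $\tilde{T}\subset D$ be the associated ideal triangle.

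I would then apply \cref{lem:ideal-triangles-project-to-bounded-diameter-subsets}, whose conclusion that $\DtoDbar(\tilde{T})$ has uniformly bounded diameter holds with no hypothesis on how $\tilde{T}$ meets $\hull{G}$; call this bound $d_0$. It suffices to show that each of $B_{\dir{\sigma}}$ and $B_{\dir{\sigma'}}$ meets $\overline{\DtoDbar(\tilde{T})}$, since then choosing $x\in B_{\dir{\sigma}}\cap\overline{\DtoDbar(\tilde{T})}$ and $y\in B_{\dir{\sigma'}}\cap\overline{\DtoDbar(\tilde{T})}$ gives $\rho(x,y)\le d_0$. This I would verify one vertex at a time. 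If $\dir{\sigma}$ is parabolic, each side of $\tilde{T}$ emanating from $\dir{\sigma}$ is a geodesic ending at $\dir{\sigma}$ whose other endpoint is a different ideal point and hence lies outside the closure of the horoball $B_{\dir{\sigma}}$; such a geodesic crosses every horocycle based at $\dir{\sigma}$, so $\tilde{T}$ meets $\partial B_{\dir{\sigma}}$. By the choice of horoballs (\cref{def:horoballs}) we have $\partial B_{\dir{\sigma}}\subset\overline{D}$, and $\DtoDbar$ is the identity on $\overline{D}$, so a point of $\tilde{T}\cap\partial B_{\dir{\sigma}}$ belongs to $B_{\dir{\sigma}}\cap\DtoDbar(\tilde{T})$.

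If instead $\dir{\sigma}$ is nonparabolic, then by the generalized Veech dichotomy (\cref{cor:to-generalized-veech-dichotomy}) $\dir{\sigma}\notin\Lambda(G)$, so $\dir{\sigma}$ is not in the closure of $\hull{G}$ in $D\cup\partial D$, and a side $\gamma$ of $\tilde{T}$ ending at $\dir{\sigma}$ eventually exits $\hull{G}$. Choosing $p_n\in\gamma$ with $p_n\to\dir{\sigma}$ and $p_n\notin\hull{G}$, the value $\DtoDbar(p_n)$ equals the closest-point projection of $p_n$ onto $\hull{G}$ (which lands on $\partial\hull{G}$, and hence, by the choice of horoballs, outside the interiors of all horoballs), and by continuity of the closest-point projection onto the convex set $\hull{G}$, extended to ideal points outside $\Lambda(G)$, this converges to the closest-point projection of $\dir{\sigma}$ onto $\hull{G}$, which is the horopoint $B_{\dir{\sigma}}$ by definition. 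Hence $B_{\dir{\sigma}}\in\overline{\DtoDbar(\tilde{T})}$. The same argument for $\dir{\sigma'}$, together with the diameter bound $d_0$, completes the proof.

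The step I expect to be the main obstacle is exactly the one that departs from the lattice setting of \cite{ddls-extensions}: there the analogue (\cite[Corollary 4.11]{ddls-extensions}) rests on a balance point sitting in the hull, but here $\tilde{T}$ may lie far from, or entirely outside of, $\hull{G}$, so there is no such anchor. What makes the argument go through is that \cref{lem:ideal-triangles-project-to-bounded-diameter-subsets} is unconditional, and that every vertex of $\tilde{T}$, parabolic or not, projects into the closure of the already uniformly bounded set $\DtoDbar(\tilde{T})$; establishing this for nonparabolic vertices, via continuity of the projection at ideal points not in $\Lambda(G)$, is the technical crux. (Alternatively, when one happens to know $\tilde{T}\cap\hull{G}\neq\emptyset$, \cref{lem:about-balance-points} gives the conclusion immediately, since then $B_{\dir{\sigma}}$ and $B_{\dir{\sigma'}}$ are each uniformly close to the adjusted balance point of $\tilde{T}$.)
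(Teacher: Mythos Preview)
Your proof is correct and follows the same approach as the paper: both derive the result from \cref{lem:ideal-triangles-project-to-bounded-diameter-subsets}. The paper's own proof is only two sentences and simply cites that lemma, leaving implicit the step you spell out, namely that each of $B_{\dir{\sigma}}$ and $B_{\dir{\sigma'}}$ actually lies in (the closure of) $\DtoDbar(\tilde{T})$; your case split into parabolic versus nonparabolic vertices, with the continuity argument for the latter, is exactly what is needed to make that inference explicit.
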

\begin{proof}
Suppose saddle connections $\sigma$ and $\sigma'$ span a triangle $T$, and let $\tilde{T}$ be the associated ideal triangle. 
Then $B_{\dir{\sigma}}$ and $B_{\dir{\sigma'}}$ are close by \cref{lem:ideal-triangles-project-to-bounded-diameter-subsets}.
\end{proof}

\subsection{Fans}

\begin{definition}
Let $x,y,z \in \Sigma$. The triangle $\Delta(x,y,z) \subset E_0$ is called a \termttt{fan} if it is nondegenerate and at least two sides consist of a single saddle connection. 
When the information about $x$, $y$, $z$ is clear from context, a fan is sometimes denoted simply by $F$.
\end{definition}
A fan canonically decomposes into a finite union of Euclidean triangles sharing a common vertex. 
See \cref{fig:euclidean-fan} for an illustration and notation used throughout this section.

\begin{definition}
Let $F = \Delta(x,y,z) \subset E_0$ be a fan as in \cref{fig:euclidean-fan}. The \termttt{ideal fan associated to $\Delta(x,y,z)$}, denoted $\Tilde{F}$, is the union of the ideal triangles $\tilde{T}_i \subset D$ associated to the Euclidean triangles $T_i \subset \Delta(x,y,z)$.
\nomenclature[Ti]{$T_i$}{$i$-th triangle in a Euclidean fan}
\nomenclature[Ti tilde]{$\tilde{T}_i$}{$i$-th triangle in an ideal fan, often the ideal triangle associated to the Euclidean triangle $T_i$}
\end{definition}
\cref{fig:structure-lemma} shows the ideal fan associated to the fan from \cref{fig:euclidean-fan}.
For a fan $F$ consisting of $k$ Euclidean triangles, the associated ideal fan $\Tilde{F}$ consists of $k$ ideal triangles with disjoint interiors formed from at most $2k + 1$ vertices; of these vertices, $k-1$ vertices each belong to exactly two ideal triangles.
The following lemma describes that these $k-1$ vertices are contained in a connected component of the boundary at infinity.

\begin{figure}[htbp]
  \centering
  \includegraphics[width=\textwidth]{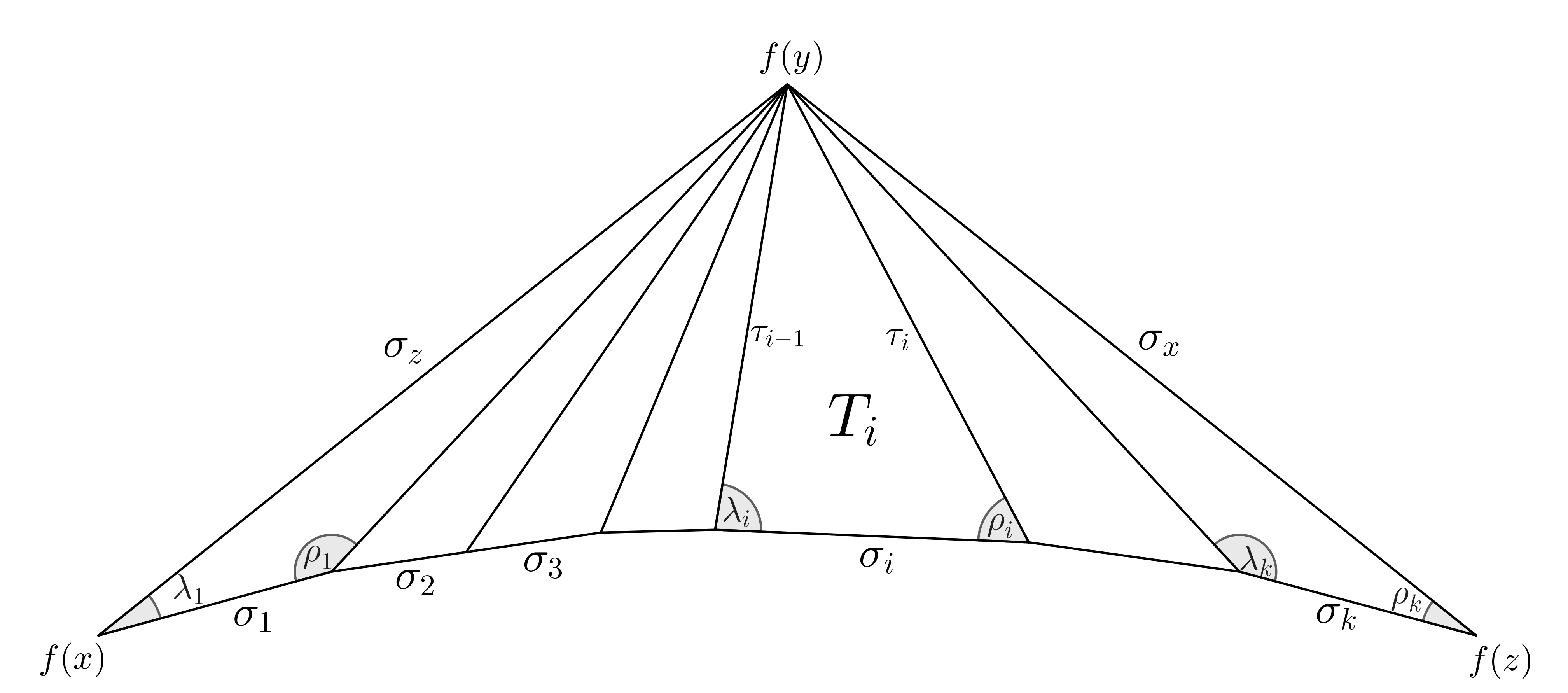}
  \caption{An example of a fan $\Delta(x,y,z)$ in $E_0$ in which $[f(x),f(y)]$ and $[f(y),f(z)]$ are each a single saddle connection, labelled with the notation used in the proof of \cref{lem:structure-of-ideal-fans} and throughout the rest of the paper. In this example, the saddle connections $\sigma_2$ and $\sigma_3$ are parallel, i.e. $\dir{\sigma_2} = \dir{\sigma_3}$.}
  \label{fig:euclidean-fan}
\end{figure}

\begin{lemma}[{\toolttt{Structure lemma}}]
\label{lem:structure-of-ideal-fans}
Let $F = \Delta(x,y,z)$ be a fan consisting of $k$ Euclidean triangles $\set{T_i}_{1 \leq i \leq k}$ as in \cref{fig:euclidean-fan}. 
Then the vertices of the associated ideal fan $\Tilde{F}$ appear in cyclic order (counterclockwise) in $\partial D$: 
$$ \dir{\sigma_z} < \dir{\tau_1} < \dots < \dir{\tau_{k-1}} < \dir{\sigma_x} < \dir{\sigma_k} \leq \dots \leq \dir{\sigma_1} < \dir{\sigma_z}. $$
Moreover, the ideal triangles $\set{\Tilde{T}_i}_{ 1 \leq i \leq k}$ have disjoint interiors. See \cref{fig:structure-lemma}.
\end{lemma}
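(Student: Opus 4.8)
The plan is to read the asserted cyclic order directly off the Euclidean geometry of the constituent triangles $T_i$, together with the geodesic condition along the side $[f(x),f(z)]$. First I would fix notation matching \cref{fig:euclidean-fan}: write the apex of $F$ as $f(y)$, write $[f(x),f(z)] = \sigma_1\sigma_2\cdots\sigma_k$ with cone points $f(x) = v_0, v_1, \dots, v_k = f(z)$ (so $v_{i-1} = \sigma_i^-$ and $v_i = \sigma_i^+$), set $\tau_i := [f(y),v_i]$ for $0 \le i \le k$ — so that $\tau_0 = \sigma_z$, $\tau_k = \sigma_x$, and $T_i$ has sides $\tau_{i-1}, \sigma_i, \tau_i$ — and let $\alpha_i, \mu_i, \nu_i$ denote the interior angles of $T_i$ at $f(y), v_{i-1}, v_i$, so that $\alpha_i + \mu_i + \nu_i = \pi$ and the angles of $F$ at $f(x)$ and $f(z)$ are $\mu_1$ and $\nu_k$.

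The first step handles the $\tau_i$. Since the $T_i$ are nondegenerate, $\mu_1, \nu_k > 0$, so the angle of $F$ at its apex, $\sum_{i=1}^k \alpha_i = \pi - \mu_1 - \nu_k$, is strictly less than $\pi$; hence the saddle connections $\tau_0, \dots, \tau_k$ emanating from $f(y)$ sweep monotonically through total angle $< \pi$, and after choosing orientations so that this sweep is counterclockwise their directions satisfy $\dir{\sigma_z} = \dir{\tau_0} < \dir{\tau_1} < \cdots < \dir{\tau_{k-1}} < \dir{\tau_k} = \dir{\sigma_x}$ inside a single arc of $\partial D$ of length $< \pi$. The second step places each base direction. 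Orienting $\tau_{i-1}, \tau_i$ away from $f(y)$ and $\sigma_i$ from $v_{i-1}$ to $v_i$, one has $\vec\sigma_i = \vec\tau_i - \vec\tau_{i-1}$, and since $\vec\tau_{i-1}, \vec\tau_i$ subtend the angle $\alpha_i < \pi$, a short computation (letting the ratio of their lengths range over $(0,\infty)$) shows $\dir{\sigma_i}$ lies in the \emph{open} arc from $\dir{\tau_i}$ to $\dir{\tau_{i-1}}$ that avoids the wedge between them; in particular $\dir{\sigma_i}\notin[\dir{\tau_{i-1}},\dir{\tau_i}]$, and applied to $T_1$ and $T_k$ this gives (counterclockwise) $\dir{\sigma_x} = \dir{\tau_k} < \dir{\sigma_k}$ and $\dir{\sigma_1} < \dir{\tau_0} = \dir{\sigma_z}$.

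The third step orders the bases among themselves using that $\sigma_1\cdots\sigma_k$ is a geodesic through each interior cone point $v_i$ ($1 \le i \le k-1$): the two angular sectors it cuts the cone at $v_i$ into are each $\ge \pi$, and the one on the $f(y)$-side equals $\nu_i + \mu_{i+1}$, so $\pi \le \nu_i + \mu_{i+1} < 2\pi$. Parallel-transporting the direction of $\sigma_i$ across $v_i$ therefore rotates it, within a single period of $\partial D$ and opposite to the sweep of Step 1, by $\nu_i + \mu_{i+1} - \pi \in [0,\pi)$, so that $\dir{\sigma_{i+1}} = \dir{\sigma_i} - (\nu_i+\mu_{i+1}-\pi)$ for consistent choices of lifts. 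Telescoping and substituting $\alpha_i + \mu_i + \nu_i = \pi$ gives $\dir{\sigma_1} - \dir{\sigma_k} = \sum_{i=1}^{k-1}(\nu_i+\mu_{i+1}-\pi) = \pi - \big(\sum_{i=1}^k \alpha_i + \mu_1 + \nu_k\big)$, which is the angle defect of the geodesic triangle $F$ and hence lies in $[0,\pi)$ because $E_0 \cong \tilde S$ is $\CAT{0}$ \cite{bh-nonpositive-curvature}. Thus $\dir{\sigma_k} \le \dir{\sigma_{k-1}} \le \cdots \le \dir{\sigma_1}$, all inside an arc of length $< \pi$ that by Step 2 lies strictly between $\dir{\sigma_x}$ and $\dir{\sigma_z}$ and is disjoint from the $\tau$-arc of Step 1 — precisely the claimed cyclic order.

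Finally, disjointness of the interiors of the $\tilde{T}_i$ follows from the cyclic order: for $i < j$ the three vertices $\dir{\tau_{j-1}}, \dir{\tau_j}, \dir{\sigma_j}$ of $\tilde{T}_j$ all lie in the boundary arc between $\dir{\tau_i}$ and $\dir{\sigma_i}$ that omits $\dir{\tau_{i-1}}$, so $\tilde{T}_j$ sits in the half-plane bounded by $\geod{\dir{\tau_i}}{\dir{\sigma_i}}$ on the side away from $\dir{\tau_{i-1}}$ while $\tilde{T}_i$ sits on the other side. I expect the hard part to be the bookkeeping in Step 3: pinning down the rotation of the base direction across $v_i$ as exactly $\nu_i + \mu_{i+1} - \pi$, keeping its sign consistent with the orientation fixed in Step 1, and — the crux — verifying that the cumulative rotation never escapes a single period of $\partial D$, which is where the $\CAT{0}$ bound on the angle sum of $F$ is essential and is what upgrades the chain of inequalities from a cyclic to an honest linear order.
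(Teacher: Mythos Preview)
Your approach is essentially the paper's: develop the fan in the plane, use the geodesic condition along the base to order the $\dir{\sigma_i}$, use the local triangle geometry to interlace them with the $\dir{\tau_i}$, and read off disjointness from the resulting nesting. Your write-up is more explicit than the paper's about why the chain of inequalities stays inside a single period of $\partial D$ (the $\CAT{0}$ angle-defect computation in Step~3), which is a genuine clarification.

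There is one slip. In Step~1 you assert $\sum_{i=1}^k \alpha_i = \pi - \mu_1 - \nu_k$, but this is false in general: the bottom of $F$ bends at each interior $v_i$, so $F$ develops to a Euclidean $(k+2)$-gon rather than a triangle, and summing $\alpha_i + \mu_i + \nu_i = \pi$ over $i$ gives only
\[
\sum_{i=1}^k \alpha_i \;=\; k\pi - \mu_1 - \nu_k - \sum_{i=1}^{k-1}(\nu_i + \mu_{i+1}) \;\le\; \pi - \mu_1 - \nu_k,
\]
with equality precisely when every $v_i$ is straight-through ($\nu_i + \mu_{i+1} = \pi$). You evidently know this, since in Step~3 you correctly identify $\pi - \bigl(\sum_i\alpha_i + \mu_1 + \nu_k\bigr)$ as the angle defect of $F$ and invoke $\CAT{0}$ to place it in $[0,\pi)$---a statement that would be vacuous if the Step~1 equality held. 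Replace the equality in Step~1 by the inequality above (or cite the $\CAT{0}$ angle-sum bound directly) and the argument goes through unchanged.
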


\begin{figure}[hp]
    \centering
    \includegraphics[width=\textwidth]{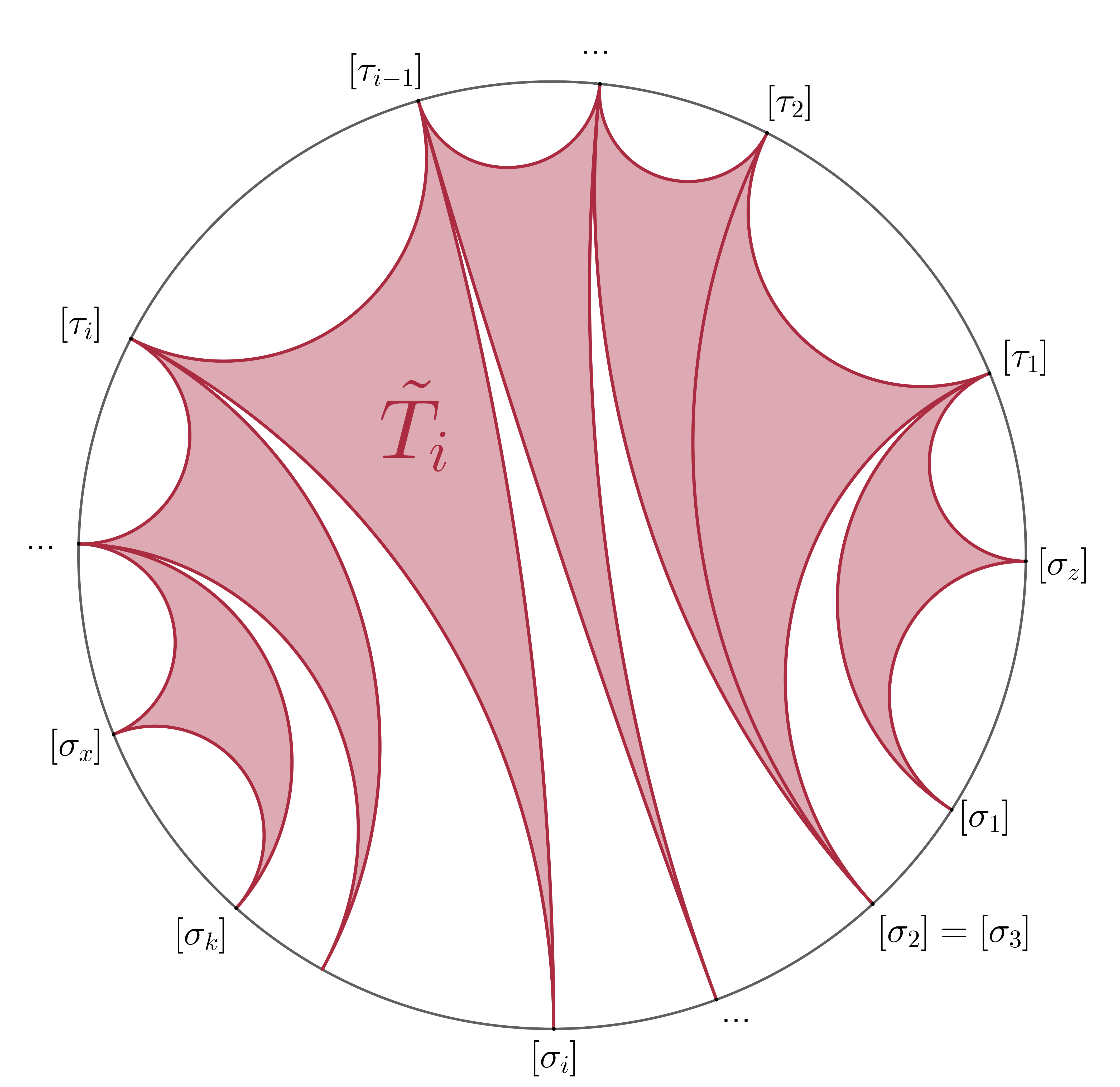}
    \caption{An example of an ideal fan in $D$ (with $\partial D$ represented by the gray circle) corresponding to the Euclidean fan from \cref{fig:euclidean-fan}. As given by the structure lemma (\cref{lem:structure-of-ideal-fans}) each vertex $\dir{\tau_*}$ belongs to exactly two ideal triangles, and $\dir{\sigma_x}$ and $\dir{\sigma_z}$ each belong to exactly one ideal triangle. The remaining vertices may belong to one or more triangles: In this example, the vertex $\dir{\sigma_2} = \dir{\sigma_3}$ belongs to two triangles because $\sigma_2$ and $\sigma_3$ were parallel in the Euclidean fan. Perhaps most importantly, the interiors of the ideal triangles are pairwise disjoint.}
    \label{fig:structure-lemma}
\end{figure}

\begin{proof}
Refer to \cref{fig:euclidean-fan,fig:structure-lemma} for a Euclidean fan, an associated ideal fan, and related notation. 
% The first several steps of this proof mirror those of \cite[Claim 4.15]{ddls-extensions}.
Develop $F$ into the Euclidean plane; it can be assumed that, after rotating, $\sigma_1$ is in the horizontal direction.
Because $\sigma_1 \sigma_2 \cdots \sigma_k$ is a geodesic in $E_{0}$, $\rho_i + \lambda_{i+1} \geq \pi$, and therefore
$$ \dir{\sigma_k} \leq \dir{\sigma_{k-1}} \leq \cdots \leq \dir{\sigma_1}. $$
Note that for each $T_i$
$$ \dir{\tau_{i-1}} < \dir{\tau_{i}} < \dir{\sigma_i}, $$
and therefore
\begin{align*}
    \dir{\sigma_z} &= \dir{\tau_0} \\
    &< \dir{\tau_{1}} < \dir{\tau_{2}} < \dots < \dir{\tau_{k-2}} < \dir{\tau_{k-1}} < \dir{\tau_k} &&= \dir{\sigma_x}
    \\ &&&< \dir{\sigma_k} \leq \dir{\sigma_{k-1}} \leq \dots \leq \dir{\sigma_{1}} < \dir{\sigma_z}.
\end{align*}

Notice that the inequalities concerning the vertices of $\tilde{T}_1$ are ``nested'' inside those concerning $\tilde{T}_2$---that is, the inequalities $\dir{\sigma_1} < \dir{\sigma_z} < \dir{\tau_{1}} $ relating the vertices of $\tilde{T}_1$ are inserted (as $*$) into the inequalities $\dir{\sigma_2} \leq * < \dir{\tau_{1}} < \dir{\tau_{2}} $ relating the vertices of $\tilde{T}_2$---
the inequalities concerning the vertices of $\tilde{T}_2$ are nested inside those concerning $\tilde{T}_3$, and so on. This demonstrates that the ideal triangles $\Tilde{T}_i$, ${ 1 \leq i \leq k}$ have disjoint interiors and proves the lemma.
\end{proof}

The following corollaries are used in the proofs of the fan lemma (\cref{lem:disjoint-fan-lemma}, \cref{lem:fan-lemma}).

\begin{corollary}
\label{cor:maximal-subfans}
Any ideal fan in $D$ can be split into at most three subfans: $\set{\tilde{T}_1, \dots, \tilde{T}_n}$, which is disjoint from $\hull{G}$; $\set{\tilde{T}_{n+1}, \dots, \tilde{T}_{m-1}}$, for which all triangles intersect $\hull{G}$; and $\set{\tilde{T}_m, \dots, \tilde{T}_k}$, which is also disjoint from $\hull{G}$.
\end{corollary}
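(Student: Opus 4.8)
The plan is to recast the claim as the purely combinatorial statement that the index set $\mathcal{M} := \{\, l : \tilde{T}_l \cap \hull{G} \neq \emptyset \,\}$ is an interval in $\{1,\dots,k\}$. Granting this, one chooses $n$ and $m$ with $\mathcal{M} = \{n+1,\dots,m-1\}$ (reading a range with no terms as empty) and lets $\{\tilde{T}_1,\dots,\tilde{T}_n\}$ and $\{\tilde{T}_m,\dots,\tilde{T}_k\}$ be the two blocks flanking $\mathcal{M}$; by construction these are disjoint from $\hull{G}$ and every triangle of the middle block meets $\hull{G}$, which is exactly the corollary. The degenerate case $\hull{G} = \emptyset$ (when $\abs{\Lambda(G)} \leq 1$) is covered by $\mathcal{M} = \emptyset$, so I may assume $\hull{G} \neq \emptyset$; being convex, it is connected.

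To show $\mathcal{M}$ is an interval it suffices to prove: if $i < l < j$ and both $\tilde{T}_i$ and $\tilde{T}_j$ meet $\hull{G}$, then $\tilde{T}_l$ does too. The idea is that an edge of $\tilde{T}_l$ separates $\tilde{T}_i$ from $\tilde{T}_j$. In the notation of the structure lemma (\cref{lem:structure-of-ideal-fans}), $\tilde{T}_l$ has ideal vertices $\dir{\tau_{l-1}},\dir{\tau_l},\dir{\sigma_l}$, and I single out its edge $g_l := \geod{\dir{\tau_l}}{\dir{\sigma_l}}$, which is a genuine geodesic since the cyclic order of \cref{lem:structure-of-ideal-fans} forces $\dir{\tau_l} \neq \dir{\sigma_l}$. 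That same cyclic order puts every ideal vertex of $\tilde{T}_1,\dots,\tilde{T}_l$ on the closed boundary arc running counterclockwise from $\dir{\sigma_l}$ to $\dir{\tau_l}$ and every ideal vertex of $\tilde{T}_{l+1},\dots,\tilde{T}_k$ on the complementary closed arc; since each $\tilde{T}_r$ is the convex hull of its ideal vertices and closed half-planes are convex, it follows that $\tilde{T}_1,\dots,\tilde{T}_l$ lie in one of the two closed half-planes bounded by $g_l$ while $\tilde{T}_{l+1},\dots,\tilde{T}_k$ lie in the other, the two closed half-planes meeting precisely along $g_l$. In particular $\tilde{T}_i$ and $\tilde{T}_j$ lie in opposite closed half-planes.

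The conclusion is then immediate: $g_l$ cuts $D$ into two disjoint open half-planes, so were $\hull{G}$ disjoint from $g_l$ it would (being connected) lie entirely in one of them and hence be disjoint from the opposite closed half-plane---contradicting that it meets both $\tilde{T}_i$ and $\tilde{T}_j$. Therefore $\hull{G} \cap g_l \neq \emptyset$, and since $g_l$ is an edge of $\tilde{T}_l$ we get $l \in \mathcal{M}$; thus $\mathcal{M}$ is an interval and the corollary follows. The one step that demands genuine care is verifying the separation property of the previous paragraph, namely reading off from the displayed cyclic inequalities of \cref{lem:structure-of-ideal-fans} that $\tilde{T}_1,\dots,\tilde{T}_l$ and $\tilde{T}_{l+1},\dots,\tilde{T}_k$ really do land in the two closed half-planes bounded by $g_l$ (the bookkeeping is slightly delicate because consecutive saddle connections of the fan may be parallel, so that some ideal vertices coincide); everything else is the elementary fact that a connected set meeting two sets separated by a geodesic must meet that geodesic.
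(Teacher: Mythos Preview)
Your proof is correct and is precisely the argument the paper has in mind: the corollary is stated without proof as an immediate consequence of the structure lemma (\cref{lem:structure-of-ideal-fans}), and you have simply written out the intended reasoning in full---namely, that the linear ordering of the $\tilde{T}_i$ forces each edge $g_l$ to separate $\{\tilde{T}_1,\dots,\tilde{T}_l\}$ from $\{\tilde{T}_{l+1},\dots,\tilde{T}_k\}$, so connectedness of $\hull{G}$ makes $\mathcal{M}$ an interval. Your care with the possible coincidences $\dir{\sigma_i}=\dir{\sigma_{i+1}}$ is appropriate and does not affect the conclusion, since such shared ideal vertices lie on $\partial D$ and hence never produce intersections of distinct $\tilde{T}_i$ with $g_l$ inside $D$.
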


Recall that $\DtoDbar: D \rightarrow \overline{D} $ is the $\rho$-closest point projection.

\begin{corollary}
\label{cor:ideal-fan-bounded-projection-to-hull}
Let $F = \Delta(x,y,z)$ be a fan, and let $\tilde{F} \subset D$ be the ideal fan associated to $F$.
If $\tilde{F} \cap \hull{G} = \emptyset$, then there is an upper bound on the diameter of $\DtoDbar(\tilde{F}) \subset \partial \hull{G}$.
\end{corollary}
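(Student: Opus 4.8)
The plan is to show that the ideal fan $\tilde{F}$ lies in a single complementary region of $D\setminus\hull{G}$, which is a half-plane bounded by one bi-infinite geodesic $g$; that on $\tilde{F}$ the map $\DtoDbar$ is just nearest-point projection onto $g$; and that the image is then squeezed into the projection of the single geodesic joining the two ``extreme'' vertices of $\tilde{F}$, which by the structure lemma (\cref{lem:structure-of-ideal-fans}) lie in at most two adjacent ideal sub-triangles whose projections are already uniformly bounded by \cref{lem:ideal-triangles-project-to-bounded-diameter-subsets}.

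First I would note that the closure $\overline{\tilde{F}}$ in $D\cup\partial D$ is connected --- it is a chain of closed ideal triangles, consecutive ones meeting at a shared vertex $\dir{\tau_i}$ --- and is disjoint from $\hull{G}$. Since $\hull{G}$ is convex with geodesic boundary, $\overline{\tilde{F}}$ lies in a single component $U$ of $D\setminus\hull{G}$, and $U$ is the open half-plane cut off by one bi-infinite geodesic $g\subset\partial\hull{G}$ (corresponding to a gap in $\Lambda(G)$); let $p,q\in\Lambda(G)$ be its endpoints. Next, no vertex of $\tilde{F}$ can be a parabolic limit point: if $\dir{\sigma}$ were parabolic it would carry a horoball $B_{\dir{\sigma}}\subset\hull{G}$ (\cref{def:horoballs}), and every geodesic edge of a sub-triangle landing at $\dir{\sigma}$ eventually enters $B^\circ_{\dir{\sigma}}$, contradicting $\tilde{F}\cap\hull{G}=\emptyset$. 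With \cref{cor:to-generalized-veech-dichotomy} this forces every vertex of $\tilde{F}$ to lie outside $\Lambda(G)$, hence in the open arc of $\partial D$ that is the ideal boundary of $U$; in particular no vertex equals $p$ or $q$.

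Because $g$ misses the connected set $\overline{\tilde{F}}$, all vertices of $\tilde{F}$ lie in that one open arc, so by the cyclic ordering of \cref{lem:structure-of-ideal-fans} the points $p$ and $q$ fall in a single gap between two cyclically consecutive vertices $v_-,v_+$ of $\tilde{F}$. Then $\geod{v_-}{v_+}$ separates $g$ from every remaining vertex, hence from $\tilde{F}$; let $H$ be the closed half-plane bounded by $\geod{v_-}{v_+}$ not containing $g$, so $\tilde{F}\subset H$. A short computation in the hyperbolic plane shows that the nearest-point projection $\pi_g$ of $H$ onto $g$ equals $\pi_g(\geod{v_-}{v_+})$ --- for $z\in H$ the perpendicular from $z$ to $g$ crosses $\geod{v_-}{v_+}$ at a point with the same foot on $g$ --- and the latter is the geodesic segment of $g$ from $\pi_g(v_-)$ to $\pi_g(v_+)$, where $\pi_g(v_\pm)$ is the foot of the perpendicular, i.e. the horopoint $B_{v_\pm}$. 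By construction the horopoints are separated from all horoballs (\cref{def:horoballs}), so this segment lies in $\overline{D}$; and since $\hull{G}\cap\overline{U}=g$, for $z\in\tilde{F}$ the nearest point of $\hull{G}$ to $z$ is $\pi_g(z)$, which therefore lies on this segment and equals $\DtoDbar(z)$. Hence $\DtoDbar(\tilde{F})$ is contained in the geodesic segment from $\pi_g(v_-)$ to $\pi_g(v_+)$.

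Finally I would bound $\rho\big(\pi_g(v_-),\pi_g(v_+)\big)$. By \cref{lem:structure-of-ideal-fans}, two cyclically consecutive vertices of $\tilde{F}$ are either both vertices of one associated ideal sub-triangle $\tilde{T}_j$, so that $\rho(\pi_g(v_-),\pi_g(v_+))\le\mathrm{diam}\,\DtoDbar(\tilde{T}_j)$, or are vertices of two sub-triangles $\tilde{T}_j,\tilde{T}_{j+1}$ sharing a common spine vertex $\dir{\tau_j}$, in which case the triangle inequality through $\pi_g(\dir{\tau_j})$ bounds it by $\mathrm{diam}\,\DtoDbar(\tilde{T}_j)+\mathrm{diam}\,\DtoDbar(\tilde{T}_{j+1})$. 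Either way \cref{lem:ideal-triangles-project-to-bounded-diameter-subsets} supplies a uniform bound, giving a uniform bound on $\mathrm{diam}\,\DtoDbar(\tilde{F})$. I expect the main obstacle to be the combinatorial bookkeeping of the last two paragraphs: reading off from the structure lemma exactly which pairs of vertices can bound the gap and checking that they always lie in at most two adjacent sub-triangles, so that the half-plane (``funnel'') estimate correctly feeds into \cref{lem:ideal-triangles-project-to-bounded-diameter-subsets}. The hyperbolic-geometry facts used --- connectedness forcing $\tilde{F}$ into one half-plane, and the projection of a half-plane onto a disjoint geodesic coinciding with that of its bounding geodesic --- are routine.
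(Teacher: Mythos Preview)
Your proof is correct and follows essentially the same approach as the paper's. The paper's argument is very terse---it simply asserts that by the structure lemma $\DtoDbar(\tilde{F})$ is contained in the projection of one or two ideal sub-triangles sharing a common vertex, and then invokes \cref{lem:ideal-triangles-project-to-bounded-diameter-subsets}---whereas you carefully unpack the hyperbolic geometry (single complementary half-plane, the separating geodesic $\geod{v_-}{v_+}$, the identification of projection feet with horopoints) that makes this assertion work. Your detailed case analysis of which cyclically consecutive vertex pairs can occur is exactly the bookkeeping the paper suppresses.
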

\begin{proof}
By \cref{lem:structure-of-ideal-fans}, $\DtoDbar(\tilde{F})$ is contained in the image of the projection of one or two ideal triangles in $\tilde{F}$.
In the former case, $\DtoDbar(\tilde{F})$ lies in the image of the projection of a single ideal triangle, which is uniformly bounded (\cref{lem:ideal-triangles-project-to-bounded-diameter-subsets}).
In the latter case these two ideal triangles share a common vertex, so an upper bound on the lengths of the projections of both triangles implies an upper bound on the diameter of $\DtoDbar(\tilde{F})$.
\end{proof}

\subsection{The fan lemma}

This section presents a statement and proof of the fan lemma (\cref{lem:fan-lemma}), which says that the triangle of collapsed preferred paths associated to any Euclidean fan is uniformly slim. It is used later to demonstrate uniform slimness of general triangles of collapsed preferred paths (\cref{thm:collapsed-preferred-paths-form-slim-triangles}).
Slimness of triangles of collapsed preferred paths is the only remaining condition needed to satisfy all hypotheses of the guessing geodesics criterion (\cref{lem:guessing-geodesics}), which gives hyperbolicity of $\hat{E}$ (\cref{thm:hat-e-is-hyperbolic}).

The statement of the fan lemma is identical to that of \cite[Lemma 4.12]{ddls-extensions}, but the proof requires significant reformulation in order to accommodate the circumstance in which the Veech group $G$ is not a lattice.

\begin{lemma}[\toolttt{Fan lemma}]
\label{lem:fan-lemma}
There is $\delta' > 0$ so that if $x,y,z \in \Sigma$ and the geodesic triangle $\Delta(x,y,z)$ in $E_0$ is a fan with $[f(x),f(y)] = \sigma_z$ and $[f(y),f(z)] = \sigma_x$, then the triangle $\Delta^{\hat{\varsigma}}(x,y,z)$ of collapsed preferred paths is $\delta'$-slim. 
Furthermore, if $y$ lies on a geodesic in $D_y$ with endpoints in $\calB(\sigma_z)$ and $\calB(\sigma_x)$,  then the collapsed preferred paths satisfy
$$ \hat{\varsigma}(x,y), \hat{\varsigma}(y,z) \subset N_{\delta'}(\hat{\varsigma}(x,z)) \hspace{1 cm} \text{and} \hspace{1 cm} \hat{\varsigma}(x,z) \subset N_{\delta'}(\hat{\varsigma}(x,y) \cup \hat{\varsigma}(y,z)).$$
\end{lemma}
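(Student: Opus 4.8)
The plan is to use the threefold decomposition of the ideal fan from \cref{cor:maximal-subfans}, treat the three pieces by different means, and glue them using the hyperbolicity of $D$ and of the horizontal leaves. Write $\Delta(x,y,z)=T_1\cup\dots\cup T_k$ for the canonical decomposition into Euclidean triangles with common vertex $f(y)$, spokes $\tau_0=\sigma_z,\tau_1,\dots,\tau_k=\sigma_x$, and long side $[f(x),f(z)]=\sigma_1\cdots\sigma_k$, so that $\varsigma(x,z)=h_0\gamma_1 h_1\cdots\gamma_k h_k$, while $\varsigma(x,y)$ and $\varsigma(y,z)$ each consist of a single saddle piece over $X_{\dir{\tau_0}}$, respectively $X_{\dir{\tau_k}}$, flanked by horizontal pieces ending at $x,y,z$. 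After reducing to nondegenerate $\Delta$, split the associated ideal fan $\tilde F$ into the hull-avoiding head $\{\tilde T_1,\dots,\tilde T_n\}$, the hull-meeting middle $\{\tilde T_{n+1},\dots,\tilde T_{m-1}\}$, and the hull-avoiding tail $\{\tilde T_m,\dots,\tilde T_k\}$; the head is the ideal fan of the sub-fan $\Delta(x,y,\sigma_n^+)$ and the tail that of $\Delta(\sigma_{m-1}^+,y,z)$.

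For the middle sub-fan, every $\tilde T_i$ meets $\hull G$, so \cref{lem:about-balance-points} places the adjusted balance point $\adj{b}_i$ uniformly close to $B_{\dir{\tau_{i-1}}}$, $B_{\dir{\sigma_i}}$ and $B_{\dir{\tau_i}}$; hence consecutive $\adj{b}_i$ are uniformly close, and by \cref{cor:intersecting-hull-implies-bounded-nonparabolic-saddle-connections} (or by collapse, when $\dir{\sigma_i}$ is parabolic) the saddle piece $\gamma_i$ has uniformly bounded length at $X_{\dir{\sigma_i}}$, so it lies within a uniform constant of its endpoint $\gamma_i^+$, a lift of $\sigma_i^+$. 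Since the spoke $\tau_i$ is uniformly short at $\adj{b}_i$ (\cref{cor:length-of-saddle-connection-bounded-at-balance-point}), $\gamma_i^+$ is within a uniform constant in $E$ of the point of the horizontal leaf $D_y$ over $\adj{b}_i$; thus the middle stretch of $\hat\varsigma(x,z)$ stays uniformly close to the part of $D_y$ over the chain $\adj{b}_{n+1},\dots,\adj{b}_{m-1}$. That chain is a uniform quasi-geodesic in $\overline D$ (consecutive points uniformly close, without substantial backtracking since the $\dir{\tau_i}$ occur in cyclic order on $\partial D$ by \cref{lem:structure-of-ideal-fans}), with endpoints uniformly close to $X_{\dir{\tau_0}}$ and $X_{\dir{\tau_k}}$ because the head and tail project into pieces of $\partial\hull G$ of uniformly bounded diameter (\cref{cor:ideal-fan-bounded-projection-to-hull}). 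Comparing with $\varsigma(x,y)\cup\varsigma(y,z)$, whose horizontal pieces sweep out $\geod{X_{\dir{\tau_0}}}{X_0}\cup\geod{X_0}{X_{\dir{\tau_k}}}$ inside $D_y$, and invoking uniform slimness of geodesic triangles in $\overline D$ (hence in $D_y\cong\hull G$) together with \cref{lem:P-is-lipschitz}, yields $\delta'$-closeness on the middle stretch.

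For the two hull-avoiding sub-fans I would apply the preceding \cref{lem:disjoint-fan-lemma} to $\Delta(x,y,\sigma_n^+)$ and $\Delta(\sigma_{m-1}^+,y,z)$: each has ideal fan disjoint from $\hull G$, so its triangle of collapsed preferred paths is $\delta'$-slim, and since all directions involved project into one bounded piece of $\partial\hull G$ (\cref{cor:ideal-fan-bounded-projection-to-hull}) the whole head stretch of $\hat\varsigma(x,z)$ lies over that bounded piece. It then fellow-travels with $\hat\varsigma(x,y)$: both start at $x$, their horizontal pieces both run in $D_x\cong\hull G$ over geodesics from $X_0$ into the bounded piece, and the spoke $\tau_n$ closing off the head, together with its preferred path (which lands back near $y$), fellow-travels with $\hat\varsigma(x,y)$ by the disjoint-fan estimate. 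Symmetrically the tail stretch fellow-travels with $\hat\varsigma(y,z)$. Assembling the head, middle, and tail estimates gives $\hat\varsigma(x,z)\subset N_{\delta'}(\hat\varsigma(x,y)\cup\hat\varsigma(y,z))$, and the two remaining inclusions follow symmetrically (each single-saddle side runs over a bounded piece matched to an end of the long side), so $\Delta^{\hat\varsigma}(x,y,z)$ is $\delta'$-slim. I expect the genuine difficulty to lie entirely in the hull-avoiding case---the long nonparabolic saddle connections, absent from the lattice setting---which is exactly why it is isolated as \cref{lem:disjoint-fan-lemma}, and whose proof must rest on the \emph{area} bound of \cref{lem:euclidean-triangle-bounded-area} (preserved under Teichm\"uller deformation) rather than on a length bound, so that the transported Euclidean triangles remain uniformly slim however long and thin they become.

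For the ``furthermore'' clause, the hypothesis that $y$ lies on a geodesic of $D_y$ with endpoints in $\calB(\sigma_z)$ and $\calB(\sigma_x)$ pins $y$ near the geodesic $\geod{X_{\dir{\tau_0}}}{X_{\dir{\tau_k}}}$ of $D_y$: by the corollary following \cref{lem:about-balance-points}, $\calB(\sigma_z)$ and $\calB(\sigma_x)$ lie over bounded neighborhoods of $B_{\dir{\tau_0}}$ and $B_{\dir{\tau_k}}$, so $X_0$---over which $y=f(y)$ sits---is within a uniform constant of that geodesic. The ``detour'' of $\varsigma(x,y)\cup\varsigma(y,z)$ back through $X_0$ is then negligible, and the estimates above upgrade to $\hat\varsigma(x,y),\hat\varsigma(y,z)\subset N_{\delta'}(\hat\varsigma(x,z))$ together with $\hat\varsigma(x,z)\subset N_{\delta'}(\hat\varsigma(x,y)\cup\hat\varsigma(y,z))$, as required.
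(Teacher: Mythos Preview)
Your overall architecture matches the paper's: the same threefold decomposition via \cref{cor:maximal-subfans}, the disjoint fan lemma for the hull-avoiding ends, and the balance-point apparatus of \cref{lem:about-balance-points} for the hull-meeting middle. The main ideas are all present.

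The genuine gap is in the middle stretch. You assert that the chain $\adj{b}_{n+1},\dots,\adj{b}_{m-1}$ is a uniform quasi-geodesic in $\overline{D}$ because consecutive points are uniformly close and ``without substantial backtracking since the $\dir{\tau_i}$ occur in cyclic order on $\partial D$''. Consecutive closeness only makes this a coarse path, not a quasi-geodesic; the cyclic order of the $\dir{\tau_i}$ on $\partial D$ does not by itself prevent the adjusted balance points from clustering near one horoball, and in particular gives no lower bound on progress. The paper does not try to prove the chain is a quasi-geodesic at all. Instead it takes the honest geodesic $h$ from $\adj{b}_{n+1}$ to $\adj{b}_{m-1}$ and proves (Claim~6.14 in the paper) that there exist points $t_{n+1},\dots,t_{m-1}$ appearing \emph{in order along $h$} whose $\hat\rho$-images are uniformly close to the collapsed horoballs $\DbartoDhat(B_{\dir{\sigma_i}})$. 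This uses the structure lemma to force $h$ to cross every $\tilde T_i$, convexity of $\hull{G}$ to keep $h$ inside the hull, and a case analysis (including when $h$ threads through $B_{\dir{\tau_{i-1}}}$ and $B_{\dir{\tau_i}}$ without meeting the short geodesics to $\adj{b}_i$). This claim is the technical heart of the middle case, and your sketch skips it.

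A smaller point: in the ``furthermore'' clause you write ``$X_0$---over which $y=f(y)$ sits''. This conflates $y$ with $f(y)$; the point $y\in\Sigma$ sits over $\pi(y)$, not over $X_0$. The hypothesis places $\pi(y)$ on a geodesic in $D$ with endpoints in $B(\sigma_z)$ and $B(\sigma_x)$, and what is actually used (as in the paper) is the tube lemma to show that the concatenation $\bar h_z'\cdot\bar h_x'$ in $D_y$ stays uniformly close to the geodesic $h_y'$ joining $X_{\dir{\sigma_z}}$ to $X_{\dir{\sigma_x}}$, so that the detour through $y$ is absorbed.
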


The ``furthermore'' statement is necessary for getting a uniform slimness constant for general triangles in later arguments.
A Euclidean fan fitting the description in the fan lemma is shown in \cref{fig:euclidean-fan}. The (uncollapsed) preferred paths joining the vertices $x,y,z \in \Sigma$ are described in \cref{tbl:top-of-fan} and \cref{tbl:bottom-of-fan}.

\begin{table}[h]
\begin{center}
\begin{tabular}{ | l l | }
\hline
 $h_z'$ 
    & horizontal geodesic in $D_x$ from $x$ to $X_{\dir{\sigma_z}}$ \Tstrut\Bstrut \\
 $\gamma_z'$ 
    & saddle connection $\sigma_z$ traversed in the fiber over the associated \\
    & horoball or horopoint, i.e. $f_{X_{\dir{\sigma_z}}} \left( \sigma_z \right)$ \Tstrut\Bstrut \\
 $\bar{h}_z'$ 
    & horizontal geodesic in $D_y$ from $X_{\dir{\sigma_z}}$ to $y$ \Tstrut\Bstrut \\  
 $\bar{h}_x'$ 
    & horizontal geodesic in $D_y$ from $y$ to $X_{\dir{\sigma_x}}$ \Tstrut\Bstrut \\
 $\gamma_x'$ 
    & saddle connection $\sigma_x$ traversed in the fiber over the associated \\
    & horoball or horopoint, i.e. $f_{X_{\dir{\sigma_x}}} \left( \sigma_x \right)$ \Tstrut\Bstrut \\
 $h_x'$ 
    & horizontal geodesic in $D_z$ from $X_{\dir{\sigma_x}}$ to $z$ \Tstrut\Bstrut \\
\hline 
\end{tabular}
\end{center}
\caption{In order, the pieces forming the concatenation of the (uncollapsed) preferred paths forming the ``top'' of the fan referred to in \cref{lem:fan-lemma}.
The primed notation on the paths in this table and in \cref{tbl:bottom-of-fan} serves a practical purpose: The unprimed notation is reserved for paths which play a much more active role in the proof of the fan lemma.
}
\label{tbl:top-of-fan}
\end{table}

\begin{table}[h]
\begin{center}
\begin{tabular}{ | l l  |}
\hline
 $h_0'$ 
    & horizontal geodesic in $D_x$ from $x$ to $X_{\dir{\sigma_1}}$ \Tstrut\Bstrut \\
 $\gamma_1'$ 
    & saddle connection $\sigma_1$ traversed in the fiber over the associated\\
    & horoball or horopoint, i.e. $f_{X_{\dir{\sigma_1}}} \left( \sigma_1 \right)$ \Tstrut\Bstrut \\
 $h_1'$ 
    & horizontal geodesic in $D_{\gamma_1'^+} = D_{\gamma_{2}'^-}$ from $X_{\dir{\sigma_1}}$ to $X_{\dir{\sigma_{2}}}$ \Tstrut\Bstrut \\
 \dots 
    & \dots \Tstrut\Bstrut \\
 $\gamma_i'$ 
    & saddle connection $\sigma_i$ traversed in the fiber over the associated\\
    & horoball or horopoint, i.e. $f_{X_{\dir{\sigma_i}}} \left( \sigma_i \right)$ \Tstrut\Bstrut \\
 $h_i'$ 
    & horizontal geodesic in $D_{\gamma_i'^+} = D_{\gamma_{i+1}'^-}$ from $X_{\dir{\sigma_i}}$ to $X_{\dir{\sigma_{i+1}}}$ \Tstrut\Bstrut \\
 \dots 
    & \dots \Tstrut\Bstrut \\
 $\gamma_k'$ 
    & saddle connection $\sigma_k$ traversed in the fiber over the associated\\
    & horoball or horopoint, i.e. $f_{X_{\dir{\sigma_k}}} \left( \sigma_k \right)$ \Tstrut\Bstrut \\
 $h_k'$ 
    & horizontal geodesic in $D_z$ from $X_{\dir{\sigma_k}}$ to $z$ \Tstrut\Bstrut \\
\hline
\end{tabular}
\end{center}
\caption{In order, the pieces forming the (uncollapsed) preferred path forming the ``bottom'' of the fan referred to in \cref{lem:fan-lemma}. 
The primed notation on the paths in this table and in \cref{tbl:top-of-fan} serves a practical purpose: The unprimed notation is reserved for paths which play a much more active role in the proof of the fan lemma. The superscripts $+$ and $-$ denote endpoints of geodesics, as described in \cref{sec:preferred-paths}.}
\label{tbl:bottom-of-fan}
\end{table}

The fan lemma is proved by \cite{ddls-extensions} under the additional hypothesis that $G$ is a lattice. 
In their case, all of the saddle connections are parabolic and therefore have length zero after being projected to $\hat{E}$ by the map $P$. This places most of the burden of the proof on showing that the horizontal pieces of the paths along the top and bottom of the fan are sufficiently close. 
Here $G$ is only required to be finitely generated, so preferred paths may contain nonparabolic saddle connections whose lengths are arbitrarily long even after collapsing. In the proof of the fan lemma that follows, the preferred path along the bottom of the fan is replaced with a substitute path to which it is uniformly close. The substitute path is chosen specifically to allow nonparabolic saddle connections to be traversed in a select set of fibers where slimness of Euclidean fans can be employed. 
The horizontal pieces of the substitute path are handled mostly in the same manner as in the lattice case presented by \cite{ddls-extensions}, except for a few horizontal pieces possibly requiring special treatment.

The following lemma will be necessary on a few occasions.

\begin{lemma}[\toolttt{Tube lemma}; adapted from {\cite[Lemma 4.5]{gm-dehn-filling}}] 
\label{lem:tube-lemma}
% I've changed this from {\cite[Lemma 4.13]{ddls-extensions}}! I needed something a little stronger, which is covered by the original tube lemma referenced here. This version allows for the case where one or more endpoints are close to a horopoint instead of a horoball. It prevents case-bashing in later arguments.
Let $\dir{\sigma}, \dir{\sigma'} \in \directions$, and let $v$ be a geodesic in $D$ with endpoints in $\partial B_{\dir{\sigma}}$ and $\partial B_{\dir{\sigma'}}$.
If $w$ is a geodesic in $D$ with endpoints each within a bounded distance of $B_{\dir{\sigma}}$ and $B_{\dir{\sigma'}}$, then $w$ lies within a uniform neighborhood of $v \cup B_{\dir{\sigma}} \cup B_{\dir{\sigma'}}$.
\end{lemma}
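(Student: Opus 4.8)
The plan is to argue entirely inside the hyperbolic plane $D$, using only that $B_{\dir{\sigma}}$ and $B_{\dir{\sigma'}}$ are closed and convex — which holds in both cases of \cref{def:horoballs}, since $D$ is $\CAT{0}$ so closed horoballs are convex (and single points trivially are), and nearest-point projections onto them are well defined. Fix a universal constant $\delta$ for which $D$ is $\delta$-hyperbolic, so every geodesic triangle in $D$ is $\delta$-thin and, splitting along a diagonal, every geodesic quadrilateral is $2\delta$-thin. Let $M > 0$ be the uniform bound supplied by the hypothesis: writing $q, q'$ for the endpoints of $w$, we have (after relabelling) $\rho(q, B_{\dir{\sigma}}) \le M$ and $\rho(q', B_{\dir{\sigma'}}) \le M$. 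Such a uniform $M$ is available in each application because all of the horoballs in play are $G$-translates of finitely many. Write $p \in \partial B_{\dir{\sigma}}$ and $p' \in \partial B_{\dir{\sigma'}}$ for the endpoints of $v$, and choose $\bar q \in B_{\dir{\sigma}}$, $\bar q' \in B_{\dir{\sigma'}}$ realising these distances (these sets being closed).

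The first step is to control the two ``connecting'' geodesic segments $[q,p]$ and $[p',q']$. Since $B_{\dir{\sigma}}$ is convex and contains both $\bar q$ and $p$, the segment $[\bar q, p]$ lies inside $B_{\dir{\sigma}}$; and $[q, \bar q]$, having length at most $M$ and an endpoint in $B_{\dir{\sigma}}$, lies in $N_M(B_{\dir{\sigma}})$. Applying $\delta$-thinness to the triangle with vertices $q, \bar q, p$ then gives $[q,p] \subset N_{\delta + M}(B_{\dir{\sigma}})$; the symmetric argument on $q', \bar q', p'$ gives $[p',q'] \subset N_{\delta + M}(B_{\dir{\sigma'}})$. (When $B_{\dir{\sigma}}$ is a horopoint the triangle degenerates and $\rho(q,p) \le M$ is immediate; likewise nothing changes if $\dir{\sigma} = \dir{\sigma'}$.)

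The second step applies thinness to the geodesic quadrilateral with vertices $q, p, p', q'$ in this cyclic order, whose four sides are $[q,p]$, $v = [p,p']$, $[p',q']$, and $w = [q',q]$. Quadrilateral $2\delta$-thinness gives $w \subset N_{2\delta}\big([q,p] \cup v \cup [p',q']\big)$, and combining with the two containments from the first step,
$$ w \;\subset\; N_{2\delta}\Big( N_{\delta+M}(B_{\dir{\sigma}}) \cup v \cup N_{\delta+M}(B_{\dir{\sigma'}}) \Big) \;=\; N_{3\delta + M}\big(v \cup B_{\dir{\sigma}} \cup B_{\dir{\sigma'}}\big), $$
which is the asserted uniform neighbourhood, with constant $K = 3\delta + M$ depending only on $M$.

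There is no genuine obstacle here; the only substantive point is that the bound be uniform, and this is exactly why the quadrilateral picture is used instead of, say, a Morse-lemma argument applied to the concatenation $[q,\bar q]\cdot[\bar q,p]\cdot v\cdot[p',\bar q']\cdot[\bar q', q']$: that concatenation need not be a uniform quasigeodesic, because the detours $[q,\bar q]\cdot[\bar q,p]$ and $[p',\bar q']\cdot[\bar q',q']$ through the horoballs can be arbitrarily long. The remaining care is purely bookkeeping — the degenerate cases noted above, and the observation that no feature of horoballs beyond closedness and convexity is ever used.
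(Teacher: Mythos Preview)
Your argument is correct. The paper does not actually supply its own proof of this lemma; it simply records the statement as ``adapted from \cite[Lemma 4.5]{gm-dehn-filling}'' and moves on. Your write-up is a clean, self-contained version of the standard thin-triangles/thin-quadrilaterals argument underlying that citation: convexity of each $B_{\dir{\sigma}}$ traps the auxiliary segment $[\bar q, p]$ inside it, a single thin triangle controls $[q,p]$, and a $2\delta$-thin quadrilateral finishes. The degenerate horopoint case and the remark about why a Morse-lemma approach would not give uniformity are both apt. One small quibble: the sentence about ``all of the horoballs in play are $G$-translates of finitely many'' is extraneous to the lemma as stated---the bound $M$ is a hypothesis, and the output constant simply depends on $M$---so you could drop it without loss.
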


To simplify the proof of the fan lemma and to demonstrate the utility of substituting preferred paths, the case in which the ideal fan is disjoint from $\hull{G}$ is presented separately. 

\begin{lemma}[\toolttt{Disjoint fan lemma}]
\label{lem:disjoint-fan-lemma}
The fan lemma (\cref{lem:fan-lemma}) holds under the additional assumption that the ideal fan in $D$ associated to $\Delta(x,y,z)$ does not intersect $\hull{G}$.
\end{lemma}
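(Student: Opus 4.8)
The plan is to exploit disjointness to push everything relevant into a bounded region of $\partial\hull{G}$ and then to compare the three collapsed preferred paths inside a single surface fibre over that region, where uniform slimness of Euclidean triangles is available.

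First I would record what disjointness buys. If the ideal fan $\tilde F$ associated to $\Delta(x,y,z)$ misses $\hull{G}$, then no vertex of $\tilde F$ can be a parabolic limit point (an ideal triangle with a parabolic vertex enters the corresponding cusp of $\hull{G}$), so by \cref{cor:to-generalized-veech-dichotomy} every saddle connection $\sigma_i,\tau_j$ in the fan is nonparabolic and every $B_{\dir{\sigma_i}},B_{\dir{\tau_j}}$ is a horopoint on $\partial\hull{G}$. By \cref{cor:ideal-fan-bounded-projection-to-hull} (using \cref{lem:structure-of-ideal-fans} and \cref{lem:ideal-triangles-project-to-bounded-diameter-subsets}) the set $\DtoDbar(\tilde F)$ is covered by the projections of at most two of the triangles $\tilde T_i$ and so has uniformly bounded diameter; hence all of $B_{\dir{\sigma_z}},B_{\dir{\sigma_1}},\dots,B_{\dir{\sigma_k}},B_{\dir{\sigma_x}}$ and the $B_{\dir{\tau_j}}$ lie pairwise within a uniform distance $R_1$. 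Since nonparabolic horoballs are empty, the only collapsing $P$ performs along the preferred paths is along the pieces of horizontal geodesics that enter parabolic horoballs, so by \cref{lem:P-is-lipschitz} it suffices to prove the desired containments for the uncollapsed preferred paths in $E$.

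Then I would split the preferred paths into their horizontal and saddle pieces. The first horizontal pieces of $\varsigma(x,z)$ and $\varsigma(x,y)$ both lie in $D_x$, share the endpoint $x$, and terminate over points of $\DtoDbar(\tilde F)$ within $R_1$ of one another, so by convexity of the metric on the $\CAT{0}$ fibre $D_x$ they are within Hausdorff distance $R_1$; likewise for the last horizontal pieces in $D_z$ and, toward the general slimness statement, for the pair $\bar h_z',\bar h_x'$ forming the ``$V$ at $y$'' in $D_y$, which are within $R_1$ of each other and hence each within the neighbourhood of the other path. For the saddle pieces, fix $X_*\in\DtoDbar(\tilde F)$; since each $\gamma_i'$ is traversed over $B_{\dir{\sigma_i}}$, within $R_1$ of $X_*$, and each $f_{Y,X}$ displaces points by at most $\rho(X,Y)$, the whole broken saddle geodesic of $\varsigma(x,z)$ lies within a uniform neighbourhood of its image $f_{X_*}(\sigma_1\cdots\sigma_k)$ in $E_{X_*}$, and $\gamma_z',\gamma_x'$ lie within a uniform neighbourhood of $f_{X_*}(\tau_0),f_{X_*}(\tau_k)$. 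In $E_{X_*}$ the fan develops, by \cref{lem:structure-of-ideal-fans}, into Euclidean triangles $f_{X_*}(T_i)$ with disjoint interiors, each of area at most $A$ by \cref{lem:euclidean-triangle-bounded-area} and hence uniformly slim. Applying slimness of all the $f_{X_*}(T_i)$ \emph{simultaneously} gives $f_{X_*}(\sigma_1\cdots\sigma_k)\subset N_{\delta_0}\!\big(\bigcup_j f_{X_*}(\tau_j)\big)$ with $\delta_0$ independent of $k$, and the crux is to show, using the nesting of \cref{lem:structure-of-ideal-fans} and the ``two extremal triangles'' form of \cref{cor:ideal-fan-bounded-projection-to-hull}, that the interior cross-edges $f_{X_*}(\tau_1),\dots,f_{X_*}(\tau_{k-1})$ stay in a uniform neighbourhood of the outer edges $f_{X_*}(\tau_0)\cup f_{X_*}(\tau_k)$, together with the symmetric inclusions. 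Combining this with the horizontal estimates and applying $P$ gives the fan lemma under the disjointness hypothesis. For the ``furthermore'' statement, the corollary that $B_{\dir{\sigma'}}$ is uniformly close to $B_{\dir\sigma}$ for $\sigma'\in\calP(\sigma)$ shows that $\calB(\sigma_z)$ and $\calB(\sigma_x)$ project into bounded neighbourhoods of $B_{\dir{\sigma_z}}$ and $B_{\dir{\sigma_x}}$; the hypothesis that $y$ lies on a geodesic in $D_y$ between them then forces $\pi(y)$ to within a uniform distance of $\DtoDbar(\tilde F)$, and the tube lemma (\cref{lem:tube-lemma}) places the ``$V$ at $y$'' in a uniform neighbourhood of $\varsigma(x,z)$, upgrading the inclusions to the sharper ones claimed.

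The main obstacle is getting a slimness constant independent of the number $k$ of Euclidean triangles in the fan. Peeling off triangles one at a time — $\sigma_i\rightsquigarrow\tau_{i-1}\cup\tau_i$, then recursing on $\tau_{i\pm1}$ — accumulates error proportional to $k$; the resolution is to run every comparison inside the single fibre $E_{X_*}$ and invoke slimness of the constituent Euclidean triangles simultaneously (a union of $\delta_0$-neighbourhoods is the $\delta_0$-neighbourhood of the union), and disjointness is exactly what makes this legitimate, since \cref{cor:ideal-fan-bounded-projection-to-hull} forces the whole ideal fan to meet $\hull{G}$ through at most two extremal triangles, bounding both the spread of the fibres used and the excursion of the interior cross-edges. (When $\tilde F$ meets $\hull{G}$ this fails — balance points may be swallowed by parabolic horoballs — and one must instead argue through collapsing, as in the lattice case of the full \cref{lem:fan-lemma}.)
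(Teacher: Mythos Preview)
Your overall architecture matches the paper's: push every saddle piece into the fibre over a single point $X_*\in\DtoDbar(\tilde F)\subset\partial\hull{G}$ (this is legitimate precisely because \cref{cor:ideal-fan-bounded-projection-to-hull} gives a uniform bound on the spread of the horopoints), compare the horizontal pieces pairwise in their common horizontal disks, and finish with the tube lemma for the ``furthermore'' clause. The divergence, and the gap, is at what you call ``the crux.''

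You reduce slimness of the fan in $E_{X_*}$ to the claim that the interior cross-edges $f_{X_*}(\tau_1),\dots,f_{X_*}(\tau_{k-1})$ lie in a uniform neighbourhood of $f_{X_*}(\tau_0)\cup f_{X_*}(\tau_k)$, and you appeal to \cref{lem:structure-of-ideal-fans} and \cref{cor:ideal-fan-bounded-projection-to-hull}. Neither result says anything about lengths of the $\tau_j$ in the fibre: the structure lemma is a statement about the cyclic order of directions on $\partial D$, and \cref{cor:ideal-fan-bounded-projection-to-hull} bounds only the $\rho$-diameter of the projection to $\overline{D}$. In fact the claim is false as stated: develop the fan into the plane and nothing prevents a middle cross-edge $\tau_{k/2}$ from being arbitrarily long and arbitrarily far (in the induced metric) from both outer edges, even though each constituent Euclidean triangle has bounded area. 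Slimness of the individual $T_i$ gives you $\bigcup_i\sigma_i\subset N_{\delta_0}(\bigcup_j\tau_j)$, but collapsing the right-hand side down to $\tau_0\cup\tau_k$ is exactly where bounded-area-per-triangle stops helping.

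The paper sidesteps this entirely. The fan in $E_{X_*}$ \emph{is} a geodesic triangle---its three sides are $\gamma_z$, $\gamma_x$, and the geodesic $\gamma_1\cdots\gamma_k$---so one only needs that $E_{X_*}$ is Gromov hyperbolic with a constant independent of $X_*\in\overline{D}$. This holds because each $(\tilde S,q_X)$ is bilipschitz to the hyperbolic metric on $\tilde S$ (they cover compact metrics on $S$), and the bilipschitz constant is uniform over $\overline{D}$ since $G$ acts cocompactly there. That single observation replaces your entire cross-edge analysis: see \cref{claim:euclidean-fans-are-slim}. Everything else in your outline---the piecewise comparison of horizontal segments, the use of $f_{X_*,X_{\dir{\sigma_i}}}$ to transport saddle pieces, the tube-lemma treatment of the ``$V$ at $y$''---is correct and is exactly what the paper does.
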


\begin{proof}
Note that objects are sometimes lifted from $D$ to various horizontal disks. For the sake of simplicity, the lifts are referred to by the same notation wherever it does not cause confusion.

Because the ideal fan never intersects the hull, all of the saddle connections in the preferred paths are traversed in the fibers over their associated horopoints, which lie in a uniformly bounded subsegment of $\partial \hull{G}$ (\cref{cor:ideal-fan-bounded-projection-to-hull}). 
The general trajectory of the proof is to replace the preferred paths with uniformly close substitute paths in which all of the saddle connections are traversed in the fiber over $X_{\dir{\sigma_z}}$, then prove that the triangle of substitute paths is slim.

The substitute path along the ``top'' of the fan is the concatenation of the following paths, in order.
\begin{center}
\begin{tabular}{ | l l }
 $h_z$ 
    & horizontal geodesic in $D_x$ from $x$ to $X_{\dir{\sigma_z}}$ \Tstrut\Bstrut \\ 
 $\gamma_z$ 
    & saddle connection $\sigma_z$ traversed in the fiber over $X_{\dir{\sigma_z}}$, i.e. $f_{X_{\dir{\sigma_z}}} \left( \sigma_z \right)$ \Tstrut\Bstrut \\
 $\bar{h}_z$ 
    & horizontal geodesic in $D_y$ from $X_{\dir{\sigma_z}}$ to $y$ \Tstrut\Bstrut \\  
 $\bar{h}_x$ 
    & horizontal geodesic in $D_y$ from $y$ to $X_{\dir{\sigma_z}}$ \Tstrut\Bstrut \\
 $\gamma_x$ 
    & saddle connection $\sigma_x$ traversed in the fiber over $X_{\dir{\sigma_z}}$, i.e. $f_{X_{\dir{\sigma_z}}} \left( \sigma_x \right)$ \Tstrut\Bstrut \\
 $h_x$ 
    & horizontal geodesic in $D_z$ from $X_{\dir{\sigma_z}}$ to $z$
\end{tabular}
\end{center}
This substitute path is uniformly close to the path in \cref{tbl:top-of-fan}, which can be seen by comparing each pair of analogous pieces as follows.
\begin{itemize}
    \item Since $h_z = h_z'$, these are uniformly close.
    \item Since $\gamma_z = \gamma_z'$, these are uniformly close.
    \item Since $\bar{h}_z = \bar{h}_z'$, these are uniformly close.
    \item Since $\bar{h}_x, \bar{h}_x' \subset D_y$ share an initial point and have terminal points ($X_{\dir{\sigma_z}}$ and $X_{\dir{\sigma_x}}$, respectively) a uniformly bounded distance apart, $\bar{h}_x$ and $\bar{h}_x'$ are uniformly close. 
    \item The saddle connections $\gamma_x$ and $\gamma_x'$ are saddle connections in the fibers over $X_{\dir{\sigma_z}}$ and $X_{\dir{\sigma_x}}$, respectively.
    The map $f_{X_{\dir{\sigma_z}},X_{\dir{\sigma_x}}}$ maps $\gamma_x'$ to $\gamma_x$, and so the distance between them is exactly the distance between $X_{\dir{\sigma_x}}$ and $X_{\dir{\sigma_z}}$, which is uniformly bounded.
    \item Since $h_x, h_x' \subset D_z$ share an initial point and have terminal points ($X_{\dir{\sigma_z}}$ and $X_{\dir{\sigma_x}}$, respectively) a uniformly bounded distance apart, $h_x$ and $h_x'$ are uniformly close. 
\end{itemize}

The substitute path along the ``bottom'' of the fan is the concatenation of the following paths, in order. 
\begin{center}
\begin{tabular}{ | l l }
 $h_0$ 
    & horizontal geodesic in $D_x$ from $x$ to $X_{\dir{\sigma_z}}$ \Tstrut\Bstrut \\
 $\gamma_1$ 
    & saddle connection $\sigma_1$ traversed in the fiber over $X_{\dir{\sigma_z}}$, i.e. $f_{X_{\dir{\sigma_z}}} \left( \sigma_1 \right)$ \Tstrut\Bstrut \\
 $h_1$ 
    & point $\gamma_1^+ = \gamma_2^-$ \Tstrut\Bstrut \\
 \dots 
    & \dots \Tstrut\Bstrut \\
 $\gamma_i$ 
    & saddle connection $\sigma_i$ traversed in the fiber over $X_{\dir{\sigma_z}}$, i.e. $f_{X_{\dir{\sigma_z}}} \left( \sigma_i \right)$ \Tstrut\Bstrut \\
 $h_i$ 
    & point $\gamma_i^+ = \gamma_{i+1}^-$ \Tstrut\Bstrut \\
 \dots 
    & \dots \Tstrut\Bstrut \\
 $\gamma_k$ 
    & saddle connection $\sigma_k$ traversed in the fiber over $X_{\dir{\sigma_z}}$, i.e. $f_{X_{\dir{\sigma_z}}} \left( \sigma_k \right)$ \Tstrut\Bstrut \\
 $h_k$ 
    & horizontal geodesic in $D_z$ from $X_{\dir{\sigma_z}}$ to $z$
\end{tabular}
\end{center}
All but the first and last horizontal paths are points because the concatenation of saddle connections $\gamma_i$ in the fiber over $X_{\dir{\sigma_z}}$ is already continuous.
Again, this substitute path is uniformly close to the path in \cref{tbl:bottom-of-fan}, which can be seen by comparing each pair of analogous pieces as follows.
\begin{itemize}
    \item Since $h_0, h_0' \subset D_x$ share an initial point and have terminal points ($X_{\dir{\sigma_z}}$ and $X_{\dir{\sigma_1}}$, respectively) a uniformly bounded distance apart, $h_0$ and $h_0'$ are uniformly close. 
    \item For all $i$, the saddle connections $\gamma_i$ and $\gamma_i'$ are saddle connections in the fibers over $X_{\dir{\sigma_z}}$ and $X_{\dir{\sigma_i}}$, respectively. 
    The map $f_{X_{\dir{\sigma_z}},X_{\dir{\sigma_i}}}$ maps $\gamma_i'$ to $\gamma_i$, and so the distance between them is exactly the distance between $X_{\dir{\sigma_i}}$ and $X_{\dir{\sigma_z}}$, which is uniformly bounded.
    \item For each $1 \leq i \leq k - 1$, $h_i$ and $h_i'$ have initial points which are the terminal points of $\gamma_i$ and $\gamma_i'$, respectively, and terminal points which are the initial points of $\gamma_{i+1}$ and $\gamma_{i+1}'$, respectively. Since for all $i$ the pair of $\gamma_i, \gamma_i'$ are uniformly close and $h_i$ is contained in a hyperbolic space (with hyperbolicity constant uniform for all $i$), the pair $h_i, h_i'$ must also be uniformly close.
    \item Since $h_k, h_k' \subset D_z$ share a terminal point and have initial points ($X_{\dir{\sigma_z}}$ and $X_{\dir{\sigma_k}}$, respectively) a uniformly bounded distance apart, $h_k$ and $h_k'$ are uniformly close. 
\end{itemize}
Since $h_i$ is degenerate for each $1 \leq i \leq k - 1$, the substitute path along the ``bottom'' of the fan is simply $h_0 \gamma_1 \gamma_2 \cdots \gamma_k h_k$. 
Then because $h_z=h_0$, $\bar{h}_z = \bar{h}_x$ (ignoring orientation), and $h_x = h_k$, to show that the triangle of substitute paths is slim it only remains to show that the saddle connections $\gamma_*$ form a slim triangle.
\begin{claim}
\label{claim:euclidean-fans-are-slim}
The Euclidean fan formed by the saddle connections $\gamma_x,\gamma_z,\gamma_1, \dots, \gamma_k$ in the fiber over $X_{\dir{\sigma_z}}$ is uniformly slim.
\end{claim}
\begin{proof}
Because $X_{\dir{\sigma_z}} \in \partial \hull{G}$ and $G$ acts cocompactly on $\overline{D} \supset \partial \hull{G}$, the space $E_{X_{\dir{\sigma_z}}}$ is uniformly hyperbolic, and therefore the Euclidean fan formed by the saddle connections $\gamma_x,\gamma_z,\gamma_1, \dots, \gamma_k$ in the fiber over $X_{\dir{\sigma_z}}$ is uniformly slim.
\end{proof}

Therefore, the original triangle of preferred paths is slim.

To prove the ``furthermore'' statement, note that since $\bar{h}_z'$ and $\bar{h}_x'$ are both in $D_y$, they form a slim triangle with the geodesic joining their endpoints, denoted $h_y'$.
When $y$ is assumed to lie on a geodesic with endpoints in $\calB(\sigma_z)$ and $\calB(\sigma_x)$, the concatenation of $\bar{h}_z'$ and $\bar{h}_x'$ lies uniformly close to $h_y'$ (\cref{lem:tube-lemma}).
By construction, $h_y'$ is a uniformly bounded subsegment of $\partial \hull{G}$.
Therefore $h_y'$---and the concatenation of $\bar{h}_z'$ and $\bar{h}_x'$---is uniformly close to the shared endpoint of $\gamma_z$ and $\gamma_x$, which results in the containments from the lemma.
\end{proof}

When the ideal fan is not disjoint from $\hull{G}$, the proof is more complex. 
First the fan is decomposed into subfans as in \cref{cor:maximal-subfans}: One subfan consists of all of the ideal triangles that intersect $\hull{G}$, and the other subfans (up to two, if any exist) are disjoint from $\hull{G}$.
Thanks to the tools in \cref{lem:about-balance-points}, the paths associated to the ideal subfan which intersects $\hull{G}$ can be handled in a manner similar to that of \cite{ddls-extensions}, with a few technicalities to consider for the horizontal paths at the beginning and end.
A subfan which is disjoint from $\hull{G}$ is handled in a similar manner to the previous proof; however, some additional work is needed since slimness of the associated Euclidean subfan in any particular fiber over $\partial \hull{G}$ is no longer sufficient to prove slimness of the whole fan.

\begin{proof}[Proof of fan lemma]
Like the disjoint case, note that objects are sometimes lifted from $D$ to other horizontal disks, and the lifts are referred to by the same notation wherever it does not cause confusion.
Finally, note that some arguments are made by showing that the uncollapsed pieces (i.e., before applying $P$) are close, which is sufficient because $P$ is $1$-Lipschitz (\cref{lem:P-is-lipschitz}). 

\hfill

\noindent\textbf{Substituting the concatenated preferred paths along the top of the fan.}
Compare to the concatenated preferred paths described in \cref{tbl:top-of-fan}.
The substitute path along the top of the fan is the concatenation of the following paths, in order.
\begin{center}
\begin{tabular}{ | l l l }
 $h_z$ 
    & $:=h_z'$ 
    & horizontal geodesic in $D_x$ from $x$ to $X_{\dir{\sigma_z}}$ \Tstrut\Bstrut \\ 
 $\gamma_z$ 
    & $:=\gamma_z'$ 
    & saddle connection $\sigma_z$ traversed in the fiber over the associated\\
    && horoball or horopoint, i.e. $f_{X_{\dir{\sigma_z}}} \left( \sigma_z \right)$ \Tstrut\Bstrut \\
 $h_y'$ 
    & 
    & horizontal geodesic in $D_y$ joining $\bar{h}_z^\prime{}^- \in X_{\dir{\sigma_z}}$ to $\bar{h}_x^\prime{}^+ \in X_{\dir{\sigma_x}}$ \Tstrut\Bstrut \\  
 $\gamma_x$ 
    & $:= \gamma_x'$ 
    & saddle connection $\sigma_x$ traversed in the fiber over the associated\\
    && horoball or horopoint, i.e. $f_{X_{\dir{\sigma_x}}} \left( \sigma_x \right)$ \Tstrut\Bstrut \\
 $h_x$ 
    & $:=h_x'$ 
    & horizontal geodesic in $D_z$ from $X_{\dir{\sigma_x}}$ to $z$
\end{tabular}
\end{center}
Since $\bar{h}_z'$ and $\bar{h}_x'$ are both in $D_y$, they form a slim triangle with the geodesic joining their endpoints, denoted $h_y'$.
In particular, $h_y'$ lies in a uniform neighborhood of $\bar{h}_z'$ and $\bar{h}_x'$.
Because the rest of the pieces of the substitute path along the top of the fan are identical to those appearing in the (uncollapsed) preferred paths, 
the substitute path along the top of the fan lies in a uniform neighborhood of $\varsigma(x,y) \cup \varsigma(y,z)$ (while the reverse is not necessarily true).
If in addition $y$ lies on a geodesic with endpoints in $\calB(\sigma_z)$ and $\calB(\sigma_x)$, then the concatenation of $\bar{h}_z'$ with $\bar{h}_x'$ is in fact \emph{uniformly} close to $h_y' \cup B(\sigma_z) \cup B(\sigma_x)$ (\cref{lem:tube-lemma}); therefore the image under $P$ of the substitute path along the top of the fan is uniformly close to $\hat{\varsigma}(x,y) \cup \hat{\varsigma}(y,z)$.

\hfill

\noindent\textbf{Substituting the preferred path along the bottom of the fan.}
Compare to the preferred path described in \cref{tbl:bottom-of-fan}.
There are at most two maximal subfans of the ideal fan which do not intersect $\hull{G}$, and any such subfan must be at the beginning or end of the fan (\cref{cor:maximal-subfans}).
Let $n$ be the largest index for which $\tilde{T}_1, \dots, \tilde{T}_n$ do not intersect $\hull{G}$.
If no such $n$ exists, set $n=0$.
Assume that $n < k$; the case when $n=k$ (that is, when no ideal triangle in the fan intersects $\hull{G}$) is proven in \cref{lem:disjoint-fan-lemma}.
Similarly, let $m$ be the smallest index for which $\tilde{T}_{m}, \dots, \tilde{T}_k$ do not intersect $\hull{G}$.
If no such $m$ exists, set $m=k+1$.
Note that as a result of the structure lemma, $m-n \geq 2$ (\cref{cor:maximal-subfans}).
Define
$$ \gamma_i := \begin{cases}
    f_{X_{\dir{\sigma_z}}}(\sigma_i) 
        & 1 \leq i \leq n \\
    \gamma_i' 
        & n < i < m \\
    f_{X_{\dir{\sigma_x}}}(\sigma_i)
        & m \leq i \leq k
\end{cases}.$$

The horizontal paths are chosen to make the substitute path along the bottom of the fan continuous.
These are explicitly described as follows.
Because the saddle connections associated to the triangles $T_{n+1}, \dots, T_{m-1}$ remain unchanged, the horizontal paths connecting them are also unchanged.
That is, define
$$ h_i := h_i' \hspace{.2in} \text{ for } n+1 \leq i \leq m-2. $$
The paths $\gamma_1 \gamma_2 \cdots \gamma_n$ and $\gamma_m \gamma_{m+1} \cdots \gamma_k$ are the concatenation of the saddle connections $\sigma_1, \dots, \sigma_n$ in the fiber over $X_{\dir{\sigma_z}}$ and the concatenation of the saddle connections $\sigma_m, \dots, \sigma_k$ in the fiber over $X_{\dir{\sigma_x}}$, respectively.
Note in particular that these paths are continuous without the presence of horizontal paths, so the horizontal ``paths'' are chosen to be the endpoints of saddle connections,
$$ h_i := \gamma_i^+ = \gamma_{i+1}^- \hspace{.2in} \text{ for } 1 \leq i \leq n-1 \text{ or } m \leq i \leq k-1.$$
When $n \geq 1$ and $m \leq k$, define the remaining horizontal paths as follows.
\begin{center}
\begin{tabular}{ l l l }
    $h_0$ 
        & $:= h_z$ 
        & horizontal geodesic in $D_x$ from $x$ to $X_{\dir{\sigma_z}}$ \Tstrut\Bstrut \\
    $h_n$ 
        &
        & horizontal geodesic in $D_{\gamma_n^+}$ from $\gamma_n^+$ to $\gamma_{n+1}^-$\Tstrut\Bstrut \\
    $h_{m-1}$ 
        &
        & horizontal geodesic in $D_{\gamma_{m-1}^+}$ from $\gamma_{m-1}^+$ to $\gamma_m^-$ \Tstrut\Bstrut \\
    $h_k$ 
        & $:= h_x$
        & horizontal geodesic in $D_z$ from $X_{\dir{\sigma_x}}$ to $z$
\end{tabular}
\end{center}
If instead $n=0$, define
$ h_0 = h_n := h_0', $
and if instead $m=k+1$, define
$ h_k = h_{m-1} := h_k'. $
In other words, if there is no ideal subfan $\tilde{T}_1, \dots, \tilde{T}_n$ (or $\tilde{T}_m, \dots, \tilde{T}_k$) disjoint from $\hull{G}$, then the beginning (or end) of the substitute path along the bottom of the fan is chosen to be identical to the original preferred path.

The reader will gain the most insight from this proof by assuming in addition that  $n\geq 1$ and $m \leq k$, since this represents the most novel circumstance in which the ideal fan can be split into three subfans depending on their intersection with $\hull{G}$ (see \cref{cor:maximal-subfans}). The following claim, for instance, becomes tautological otherwise.

\begin{claim}
\label{claim:substitute-path-suffices}
The substitute path along the bottom of the fan, $h_0 \gamma_1 h_1 \cdots \gamma_i h_i \cdots \gamma_k h_k$,  is uniformly close to the preferred path along the bottom of the fan, 
    $$\varsigma(x,z) = h_0' \gamma_1' h_1' \cdots \gamma_i' h_i' \cdots \gamma_k' h_k'.$$
\end{claim}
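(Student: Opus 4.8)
The plan is to compare the substitute path $h_0\gamma_1 h_1\cdots\gamma_k h_k$ with $\varsigma(x,z)=h_0'\gamma_1'h_1'\cdots\gamma_k'h_k'$ piece by piece. The two concatenations have the same number of pieces, listed in the same order, so $\varsigma(x,z)$ and the substitute path are within Hausdorff distance $C$ of each other as soon as each corresponding pair $\gamma_i,\gamma_i'$ and $h_i,h_i'$ is within Hausdorff distance $C$ --- the total number of pieces then plays no role. Since $P$ is $1$-Lipschitz (\cref{lem:P-is-lipschitz}) it suffices to carry this out for the uncollapsed pieces.

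First I would dispose of the middle indices: for $n<i<m$ (and $n+1\le i\le m-2$) the pieces are equal by definition, $\gamma_i=\gamma_i'$ and $h_i=h_i'$. Next I would treat the initial subfan $1\le i\le n$. Here $T_1,\dots,T_n$ form a Euclidean subfan of $F$ whose associated ideal fan is disjoint from $\hull{G}$, so $\DtoDbar$ of it has uniformly bounded diameter in $\partial\hull{G}$ (\cref{cor:ideal-fan-bounded-projection-to-hull}). Every vertex of this subfan is nonparabolic, since an ideal triangle with a parabolic vertex necessarily enters the horoball at that vertex, which lies inside $\hull{G}$ by \cref{def:horoballs}; hence $X_{\dir{\sigma_z}},X_{\dir{\sigma_1}},\dots,X_{\dir{\sigma_n}}$ are horopoints lying in this bounded set and are pairwise within a uniform distance. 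Because $\gamma_i=f_{X_{\dir{\sigma_z}},X_{\dir{\sigma_i}}}(\gamma_i')$ and this map displaces each point by at most $\rho(X_{\dir{\sigma_z}},X_{\dir{\sigma_i}})$, the pair $\gamma_i,\gamma_i'$ --- and in particular their endpoints --- are uniformly close; consequently the degenerate horizontal piece $h_i=\gamma_i^+$ (for $1\le i\le n-1$) lies uniformly near both endpoints of $h_i'$, and since $h_i'$ is a geodesic in a horizontal disk with uniform hyperbolicity constant, it is uniformly close to $h_i$. The terminal subfan $m\le i\le k$ I would handle symmetrically, with $X_{\dir{\sigma_x}}$ in place of $X_{\dir{\sigma_z}}$ and the ideal subfan $\tilde T_m\cup\cdots\cup\tilde T_k$.

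Then I would deal with the four ``seam'' horizontal pieces $h_0,h_n,h_{m-1},h_k$ that arise when $n\ge 1$ and $m\le k$ (when $n=0$ or $m=k+1$ the relevant ones are defined to equal their primed versions, so there is nothing to check). The pieces $h_0,h_0'$ both lie in $D_x$, share the endpoint $x$, and have other endpoints $X_{\dir{\sigma_z}},X_{\dir{\sigma_1}}$, which are among the horopoints of $\tilde T_1$ and hence uniformly close by the previous paragraph; thinness of triangles in $D_x$ then makes $h_0,h_0'$ uniformly close, and likewise $h_k,h_k'$ in $D_z$ (using $X_{\dir{\sigma_x}},X_{\dir{\sigma_k}}$, vertices of $\tilde T_k$). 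Finally $h_n$ shares the endpoint $\gamma_{n+1}^-=(\gamma_{n+1}')^-$ with $h_n'$ (the middle pieces being equal) and has its other endpoint $\gamma_n^+$ within a uniform distance of $(\gamma_n')^+$ by the subfan estimate, so thinness again gives uniform closeness; $h_{m-1},h_{m-1}'$ is the same, using $\gamma_{m-1}^+=(\gamma_{m-1}')^+$ and the terminal-subfan estimate. Collecting the bounds, every corresponding pair of pieces has Hausdorff distance at most one uniform constant, whence the two paths are uniformly close.

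The hard part is the estimate in the second paragraph --- that $X_{\dir{\sigma_z}}$ (resp.\ $X_{\dir{\sigma_x}}$) is uniformly close to \emph{every} $X_{\dir{\sigma_i}}$ of the disjoint subfan --- which is the whole reason for isolating the disjoint subfans and rests on \cref{cor:ideal-fan-bounded-projection-to-hull}, hence ultimately on the structure lemma (\cref{lem:structure-of-ideal-fans}) and the absence of short saddle connections over $\overline D$ (\cref{cor:no-short-saddle-connections-in-truncated-hull}). Everything else is routine: estimates on the maps $f_{X,Y}$ and thinness of geodesic triangles in the horizontal disks.
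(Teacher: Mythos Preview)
Your proof is correct and follows essentially the same approach as the paper's: both argue piece by piece, invoking \cref{cor:ideal-fan-bounded-projection-to-hull} for the disjoint subfans to get that all the relevant $X_{\dir{\sigma_i}}$ lie in a bounded set, then using the displacement bound for $f_{X,Y}$ to compare saddle pieces and thinness in the horizontal disks to compare horizontal pieces. Your explicit observation that all vertices of a disjoint ideal subfan are nonparabolic (because a parabolic vertex forces the triangle into its horoball, hence into $\hull{G}$) is a nice clarification that the paper leaves implicit.
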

\begin{proof}[Proof of claim.]
If $n=0$ and $m=k+1$---that is, if every triangle in the ideal fan intersects $\hull{G}$---then the preferred path and the substitute path are identical. 
Otherwise if there are triangles in the ideal fan which do not intersect $\hull{G}$, then only the initial and/or terminal portions of the preferred and substitute paths are distinct.
This proof will demonstrate the case when $n \geq 1$; the proof for the case when $m \leq k$ is analogous, and then the case when both $n \geq 1$ and $m \leq k$ follows immediately.

For any $i > n$, $h_i = h_i'$ and $\gamma_i = \gamma_i'$. So it remains to demonstrate that
$$ h_0' \gamma_1' h_1' \cdots \gamma_n' h_n' \text{ and } h_0 \gamma_1 h_1 \cdots \gamma_n h_n $$
are uniformly close. 
This comes as a consequence of \cref{cor:ideal-fan-bounded-projection-to-hull}, which states that the projection of the ideal subfan $\tilde{T}_1, \dots, \tilde{T}_n$ to $\hull{G}$ is a uniformly bounded subsegment of $\partial \hull{G}$.
Therefore the initial segments $h_0'$ and $h_0$ share an endpoint at $x$, and their other endpoints
%---at $\gamma_1'^-$ and $\gamma_1^- = \gamma_z^-$, respectively---
are uniformly close,
so $h_0'$ and $h_0$ are uniformly close since they lie in a horizontal disk which is uniformly hyperbolic.
Similarly, the final segments $h_n'$ and $h_n$ share an endpoint at $\gamma_{n+1}^-$, and their other endpoints
% ---at $\gamma_n'^+$ and $\gamma_n^+$, respectively---
are uniformly close, 
so $h_n'$ and $h_n$ are uniformly close.
The point $\gamma_i^- = h_{i-1}^+$ is uniformly close to $\gamma_i'^- = h_{i-1}'^+$, and $\gamma_i^+ = h_i^-$ is uniformly close to $\gamma_i'^+ = h_i'^-$, 
so $\gamma_i'$ is uniformly close to $\gamma_i$ (as in the proof of \cref{lem:disjoint-fan-lemma}).
% This is enough because each pair of horizontal paths ends at the same cone point, just in different (uniformly close) fibers.
Finally, the endpoints of each $h_i'$, $1 \leq i \leq n-1$, must be within a uniformly bounded distance of each $h_i$, which completes the proof.
\end{proof}

To prove the fan lemma, it now suffices to show that the substitute paths along the top and bottom of the fan are uniformly close to each other.

\hfill

\noindent\textbf{Horizontal pieces at $x$ and $z$ are close.}
When $n \geq 1$, $h_z = h_0$ by construction. 
Otherwise for $n=0$, $h_z$ and $h_0$ share an endpoint at $x$, and their other endpoints are uniformly close because the ideal triangle $\tilde{T}_1$ intersects $\hull{G}$ (\cref{lem:about-balance-points}).
So, in either case, it follows that $h_z$ and $h_0$ are uniformly close.

Analogous arguments show that $h_x$ and $h_k$ are uniformly close. 

\hfill

\noindent\textbf{Saddle connections $\gamma_i$ for $i \leq n$ or $i \geq m$.}
Recall that 
$$ \gamma_1 h_1 \gamma_2 h_2 \cdots h_{n-1} \gamma_n 
    = \gamma_1 \gamma_2 \cdots \gamma_n $$ 
is a continuous path in the fiber over $X_{\dir{\sigma_z}}$.
It forms one side of a Euclidean fan whose other sides are $\gamma_z$ and $\tau_n$---that is, the fan formed by triangles $T_1, \dots, T_n$, which is slim (\cref{claim:euclidean-fans-are-slim}).
By construction, the saddle connection $\tau_n$ is nonparabolic.
Because the ideal triangle $\tilde{T}_{n+1}$ intersects $\hull{G}$, the saddle connection $\tau_n$ has uniformly bounded length in the fiber over $X_{\dir{\tau_{n}}}$ (\cref{cor:intersecting-hull-implies-bounded-nonparabolic-saddle-connections}).
Since $X_{\dir{\tau_{n}}} $ is within a uniformly bounded distance of $X_{\dir{\sigma_z}}$ (\cref{cor:ideal-fan-bounded-projection-to-hull}),
the saddle connection $\tau_n$ has uniformly bounded length in the fiber over $X_{\dir{\sigma_z}}$.
Therefore $\gamma_1 \dots \gamma_n$ is uniformly close to $\gamma_z$ because they are in $E_{X_{\dir{\sigma_z}}}$ which is uniformly hyperbolic.

Analogous arguments show that for $i \geq m$ the saddle connection $\gamma_x$ in the top of the fan is uniformly close to the paths $\gamma_m \gamma_{m+1} \cdots \gamma_k$ in the substitute path along the bottom of the fan.

\hfill

\noindent\textbf{Horizontal pieces $h_i$ for $n+1 \leq i \leq m-2$.} 
The ideal triangles $\tilde{T}_{n+1}$ and $\tilde{T}_{m-1}$ are the first and last ideal triangles intersecting the hull, respectively. 
Define $h$ to be the geodesic in $D$ between the adjusted balance points $\adj{b}_{n+1}$ and $\adj{b}_{m-1}$, and let $h_y$ denote its lift to $D_y$.\footnote{Recall from the beginning of the proof that for the sake of simplicity, objects in $D$ and their lifts to various horizontal disks are generally referred to by the same notation. This is the first (and only) instance when an object in $D$, $h$, is \emph{notationally} distinguished from one of its lifts, $h_y \subset D_y$.}
Because the ideal triangle $\tilde{T}_{n+1}$ intersects $\hull{G}$, there is a uniformly bounded distance from the endpoint $h_y^- = \adj{b}_{n+1}$ to $B_{\dir{\tau_n}}$ (\cref{lem:about-balance-points}), which lies a uniformly bounded distance from $B_{\dir{\sigma_{z}}}$ (indeed if $n=0$, $B_{\dir{\tau_n}}=B_{\dir{\sigma_{z}}}$; otherwise this follows from \cref{cor:ideal-fan-bounded-projection-to-hull}).
Similarly, there is a uniformly bounded distance from the endpoint $h_y^+ = \adj{b}_{m-1}$ to $B_{\dir{\sigma_x}}$.
Because $h_y'^- \in \partial B_{\dir{\sigma_z}}$ and $h_y'^+ \in \partial B_{\dir{\sigma_x}}$, the tube lemma (\cref{lem:tube-lemma}) ensures that $h_y$ lies uniformly close to $h_y' \cup B_{\dir{\sigma_z}} \cup B_{\dir{\sigma_x}}$.
Therefore $P(h_y)$ and $P(h_y')$ must have uniformly bounded Hausdorff distance.
% The $P$ is necessary in the case that $n=0$, because $\sigma_1$ and/or $\sigma_z$ may be parabolic.

The aim is to show that $P(h_y)$ and $P(h_{n+1} \cup \cdots \cup h_{m-1})$ have uniformly bounded Hausdorff distance. 
This is accomplished by breaking $h_y$ into segments, each of which has uniformly bounded Hausdorff distance to its image under $P$.

Recall that $h_i$ is in the disk $D_{\gamma_i^+}$ connecting the horoballs (or horopoints) $B_{\dir{\sigma_i}}$ and $B_{\dir{\sigma_{i+1}}}$.
Define $h_i''$ to be the geodesic in $D_{\gamma_i^+}$ between (the lifts of) $\adj{b}_i$ and $\adj{b}_{i+1}$, the adjusted balance points of ideal triangles $\tilde{T}_i$ and $\tilde{T}_{i+1}$. 
Because $\tilde{T}_i \cap \hull{G} \neq \emptyset$ and $\tilde{T}_{i+1} \cap \hull{G} \neq \emptyset$, there is a uniform bound on the distance from $\adj{b}_i$ to $B_{\dir{\sigma_i}}$ and from $\adj{b}_{i+1}$ to $B_{\dir{\sigma_{i+1}}}$ (\cref{lem:about-balance-points}).
(Note that if there are multiple saddle connections in the same direction, then $h_i$ might be a point which is uniformly close to both endpoints of $h_i''$.)
Then by the tube lemma (\cref{lem:tube-lemma}), $h_i''$ lies uniformly close (in $D_{\gamma_i^+}$) to $h_i \cup B_{\dir{\sigma_i}} \cup B_{\dir{\sigma_{i+1}}}$.
Therefore $P(h_i'')$ and $P(h_i)$ have uniformly bounded Hausdorff distance.
% Recall that I have required in the construction that for each direction $\dir{\sigma_i}$ there is a single choice of $X_{\dir{\sigma_i}}$.

Define $g_i$ to be the geodesic in $D_y$ joining $\adj{b}_i$ and $\adj{b}_{i+1}$.
Since $\adj{b}_i$ and $\adj{b}_{i+1}$ are both within a uniformly bounded distance of $B_{\dir{\tau_i}}$ (\cref{lem:about-balance-points}),
the saddle connection $\tau_i$ has bounded length over the geodesic $[\adj{b}_i, \adj{b}_{i+1}]$ and therefore $h_i''$ and $g_i$ have bounded Hausdorff distance in $E$ (and in $\overline{E}$). 
% Note that this is true regardless of whether $\dir{\tau_i^+}$ is parabolic; we need only that this vertex is part of an ideal triangle that meets the hull.

The following claim will allow $h_y$ to be broken into segments whose images under $P$ will be shown to be uniformly close to each of the $P(g_i)$.
Recall that $\DbartoDhat : \hull{G} \rightarrow \hat{D}$ is the map that collapses  horoballs in $\overline{D}$ to points.

\begin{claim}
\label{claim:h-close-to-horoballs}
There are points $t_{n+1}, \dots, t_{m-1}$ appearing in order along the geodesic $h$ whose images under $\DbartoDhat: D \rightarrow \hat{D}$ respectively lie within uniformly bounded distance of the collapsed horoballs (or horopoints) $\DbartoDhat(B_{\dir{\sigma_{n+1}}}), \dots, \DbartoDhat(B_{\dir{\sigma_{m-1}}})$.
\end{claim}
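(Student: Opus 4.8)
The plan is to locate each $t_j$ near the adjusted balance point $\adj{b}_j$ of the ideal triangle $\tilde{T}_j$ (whose vertices are $\dir{\tau_{j-1}}, \dir{\tau_j}, \dir{\sigma_j}$), and to transfer the needed estimates through \cref{lem:about-balance-points}. First I would record the reduction. Since each $\tilde{T}_j$ with $n+1 \le j \le m-1$ meets $\hull{G}$, \cref{lem:about-balance-points} places $\adj{b}_j$ within a uniform constant $C_0$ of $B_{\dir{\sigma_j}}$; and the collapsing map $\DbartoDhat$ (viewed on $D$ by first applying $\DtoDbar$) is $1$-Lipschitz and sends the whole set $B_{\dir{\sigma_j}}$ to the single point $\DbartoDhat(B_{\dir{\sigma_j}})$. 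Hence $\hat{\rho}\big(\DbartoDhat(\adj{b}_j), \DbartoDhat(B_{\dir{\sigma_j}})\big) \le C_0$. So it suffices to produce points $t_{n+1}, \dots, t_{m-1}$ along $h$, in that order, with $\rho(t_j, \adj{b}_j)$ uniformly bounded; then $\hat{\rho}\big(\DbartoDhat(t_j), \DbartoDhat(B_{\dir{\sigma_j}})\big) \le \rho(t_j, \adj{b}_j) + C_0$ is uniformly bounded. For $j = n+1$ and $j = m-1$ one may take $t_j$ to be the corresponding endpoint of $h$, so only the indices $n+2 \le j \le m-2$ need attention (and the statement is vacuous unless $m-n \ge 4$).

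To show $\adj{b}_j$ lies within a uniform distance of $h$, I would bound the Gromov product $\big(\adj{b}_{n+1} \cdot \adj{b}_{m-1}\big)_{\adj{b}_j}$ uniformly, using hyperbolicity of $D$. By \cref{rmk:balance-pt-is-incenter} and \cref{lem:about-balance-points}, $\adj{b}_j$ lies within a uniform distance of the incenter of $\tilde{T}_j$, hence within a uniform distance of each of the two edges $e^{-} := \geod{\dir{\tau_{j-1}}}{\dir{\sigma_j}}$ and $e^{+} := \geod{\dir{\tau_j}}{\dir{\sigma_j}}$ of $\tilde{T}_j$. By the structure lemma (\cref{lem:structure-of-ideal-fans}) every earlier triangle $\tilde{T}_i$ ($i<j$) has all of its vertices on the arc of $\partial D$ cut off by $e^{-}$ away from $\tilde{T}_j$, and every later triangle $\tilde{T}_i$ ($i>j$) has all of its vertices on the arc cut off by $e^{+}$ away from $\tilde{T}_j$; in particular $\adj{b}_{n+1}$ lies in the open half-plane $H^{-}$ bounded by $e^{-}$ not meeting $\tilde{T}_j$, and $\adj{b}_{m-1}$ lies in the open half-plane $H^{+}$ bounded by $e^{+}$ not meeting $\tilde{T}_j$. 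Thus $[\adj{b}_j, \adj{b}_{n+1}]$ crosses $e^{-}$ within a bounded distance of $\adj{b}_j$ and stays in $\overline{H^{-}}$ afterwards, while $[\adj{b}_j, \adj{b}_{m-1}]$ crosses $e^{+}$ near $\adj{b}_j$ and stays in $\overline{H^{+}}$. Since $H^{-}$ and $H^{+}$ are disjoint and their closures meet only at the common ideal vertex $\dir{\sigma_j}$, and neither geodesic approaches $\dir{\sigma_j}$ — its far endpoint lies in $\overline{D}$, at bounded distance from $\partial\hull{G}$, whereas points close to $\dir{\sigma_j}$ are far from $\partial\hull{G}$ by \cref{cor:no-short-saddle-connections-in-truncated-hull} — the two geodesics fellow-travel only a bounded amount past $\adj{b}_j$. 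This bounds the Gromov product, so $\adj{b}_j$ is within a uniform distance of $h$; for $n+2 \le j \le m-2$ I set $t_j$ to be a nearest point of $h$ to $\adj{b}_j$. The cyclic order of the pairs $(e^{-}, e^{+})$ along the fan forces the $t_j$ to be coarsely monotone along $h$, so after replacing them with genuinely ordered points at bounded distance (which changes nothing) the $t_j$ appear in order. Together with the first paragraph this proves the claim.

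The main obstacle is this middle step: turning the purely combinatorial ``disjoint interiors and cyclic order'' output of the structure lemma into the metric fact that the two geodesics issuing from $\adj{b}_j$ genuinely diverge. This needs three ingredients at once — that $\adj{b}_j$ sits coarsely at the incenter of $\tilde{T}_j$, that the earlier and later subfans lie on opposite sides of $\tilde{T}_j$, and that the sole place where the two separating half-planes come close, namely near $\dir{\sigma_j}$, is unreachable because the remote endpoints $\adj{b}_{n+1}$ and $\adj{b}_{m-1}$ live in the thick part $\overline{D}$ where saddle connections cannot be short. A subsidiary point, also resting on the structure lemma, is the verification that the resulting $t_j$ can be arranged in order along $h$.
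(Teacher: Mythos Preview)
Your reduction is too strong, and that is where the argument breaks. You aim to find $t_j$ on $h$ with $\rho(t_j,\adj{b}_j)$ uniformly bounded, but the paper explicitly does \emph{not} obtain this: in the final case of its proof (illustrated in Figure~\ref{fig:h-close-to-horoballs-technicality}) it chooses $t_i$ on the segment $h_i^\sharp$ and remarks that ``$t_2$ and $\adj{b}_2$ may not be uniformly close'' in $\rho$, only after applying $\DbartoDhat$. The phenomenon is that $h$ can dive deep into the parabolic horoballs $B_{\dir{\tau_{j-1}}}$ and $B_{\dir{\tau_j}}$; although $\adj{b}_j$ is uniformly close to each of those horoballs, it need not be close to the particular chord $h$ cuts through them, because horoballs have infinite diameter. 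So the uncollapsed distance $\rho(\adj{b}_j,h)$ is not uniformly controlled, and the collapsed metric is essential, not a convenience.

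Two specific steps in your Gromov-product argument fail for the same underlying reason. First, ``$[\adj{b}_j,\adj{b}_{n+1}]$ crosses $e^-$ within a bounded distance of $\adj{b}_j$'' does not follow from $\adj{b}_j$ being close to $e^-$: a geodesic from a point near a line to a point on the far side can cross that line arbitrarily far away (take $e^-=\{x=0\}$ in the upper half-plane, $\adj{b}_j$ near $(0,1)$, and $\adj{b}_{n+1}$ at $(-\epsilon,R)$ with $R$ large). Second, your justification that ``neither geodesic approaches $\dir{\sigma_j}$'' misreads \cref{cor:no-short-saddle-connections-in-truncated-hull}: when $\dir{\sigma_j}$ is parabolic, points deep in $B_{\dir{\sigma_j}}$ are inside $\hull{G}$ and not far from $\partial\hull{G}$ at all; a geodesic with both endpoints in $\overline{D}$ can still penetrate a parabolic horoball arbitrarily deeply. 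The paper's route avoids all of this: it uses the structure lemma to force $h$ through each $\tilde{T}_i$, then argues case-by-case on whether $h\cap\tilde{T}_i$ meets one of the three short ``yellow'' geodesics from $\adj{b}_i$ to its associated horoballs/horopoints, invoking the tube lemma (\cref{lem:tube-lemma}) in the collapsed metric for the residual case.
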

This statement is functionally analogous to \cite[Claim 4.15]{ddls-extensions}. Their proof relies on defining $h$ between two balance points, where the associated Euclidean triangles are equilateral, and using the intermediate value theorem to identify points on $h$ which must be close to the balance points of each of the intermediate triangles in the fan. 
The proof below relies instead on the structure lemma (\cref{lem:structure-of-ideal-fans}) to force the same conclusion. Importantly, the approach presented here works in the more general case where $h$ is defined between \emph{adjusted} balance points, where the associated Euclidean triangles might not be equilateral. A key advantage to this construction is that all of the $t_i$ are forced to be in $\hull{G}$, which avoids the issue of later needing to project paths back onto the bundle over the hull.

\begin{proof}[Proof of \cref{claim:h-close-to-horoballs}]
Recall that $h$ is the geodesic joining adjusted balance points $\adj{b}_{n+1}$ and $\adj{b}_{m-1}$.
Recall also that for each $i \in \set{n+1, \dots, m-1}$ the ideal triangle $\tilde{T}_i$ intersects $\hull{G}$ by construction.
Therefore each adjusted balance point $\adj{b}_i$ is within a uniformly bounded distance of all three horoballs or horopoints associated to $\tilde{T}_i$ (\cref{lem:about-balance-points}).
Choose $t_{n+1} := \adj{b}_{n+1}$ and $t_{m-1} := \adj{b}_{m-1}$, which are close to $B_{\dir{\sigma_{n+1}}}$ and $B_{\dir{\sigma_{m-1}}}$, respectively.

By the structure lemma (\cref{lem:structure-of-ideal-fans}), $h$ must pass through all of the ideal triangles $\tilde{T}_i$ for $n+1 \leq i \leq m-1$; and by convexity of $\hull{G}$, $h$ is entirely contained in $\hull{G}$. 
An example is illustrated in \cref{fig:h-close-to-horoballs}.
Therefore if $h$ intersects any of the geodesics joining $\adj{b}_i$ to the horoballs or horopoints associated to $\tilde{T}_i$, then setting $t_i$ to be the intersection point satisfies the claim. (If $h$ intersects more than one of these geodesics, then any choice of intersection point will suffice.)
If $h \cap \tilde{T}_i \subset B_{\dir{\sigma_i}}$, then any choice of $t_i \in h \cap \tilde{T}_i$ will satisfy the claim.

\begin{figure}[H]
    \centering
    \includegraphics[width=\textwidth]{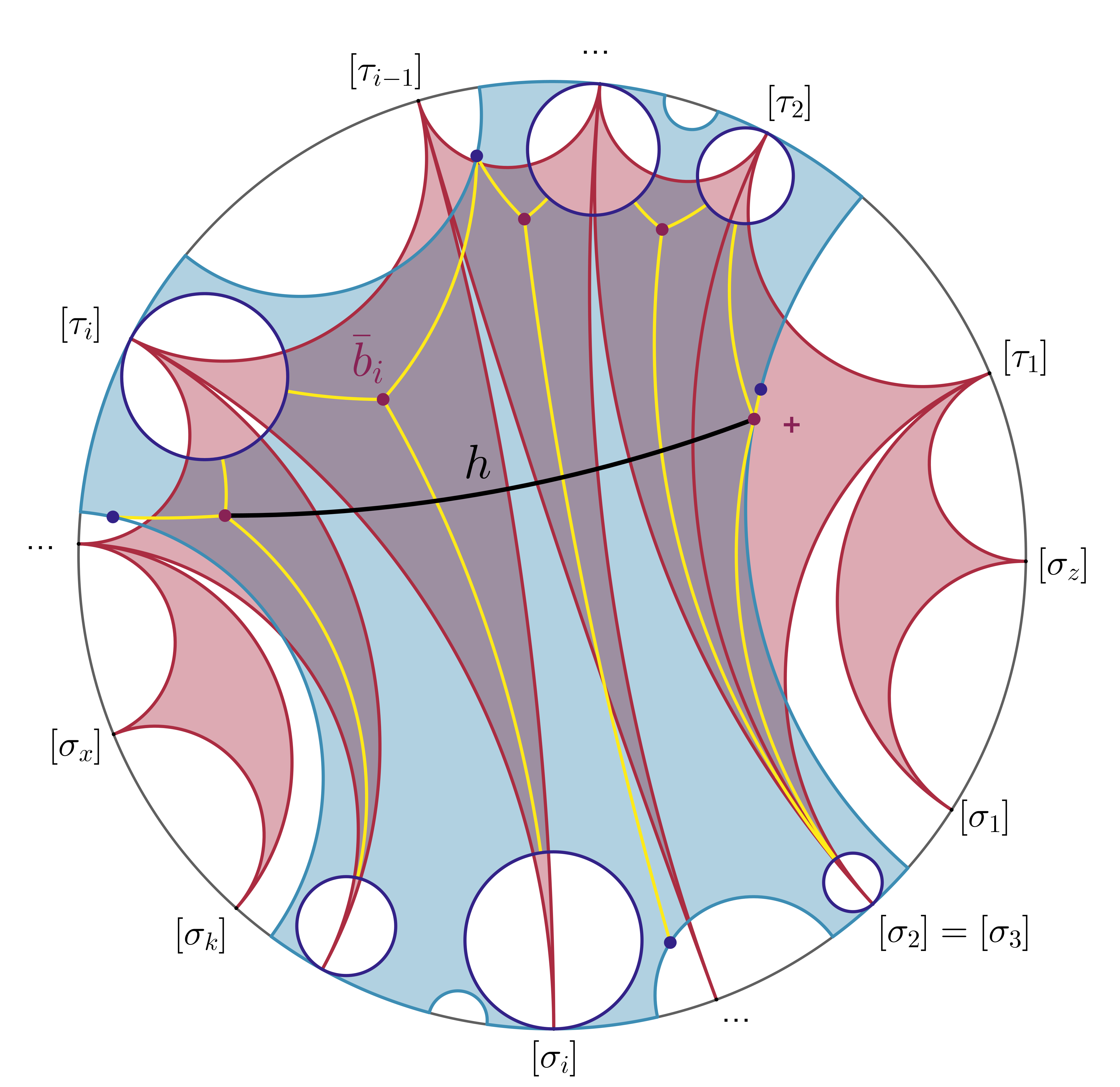}
    \caption{An example of a construction of the $\set{t_i}$ in the proof of \cref{claim:h-close-to-horoballs}. The ideal fan from \cref{fig:structure-lemma} (red) has been superimposed with an example of an approximation to a truncated convex hull $\overline{D}$ (blue). 
    Note that in this drawing, $n=1$ and $m=1$ since there is one triangle disjoint from the hull at the beginning and end of the fan, so the initial and final triangles are not considered by the claim. For the vertices of the fan at parabolic points, the dark blue circles are the boundaries of the horoballs $B_*$. For the vertices of the fan at nonparabolic points, the dark blue dots are the horopoints $B_*$. (Refer to \cref{def:horoballs}.) Then for each ideal triangle the adjusted balance point $\adj{b}_*$ is represented by a red dot; in this example, the only triangle with $\adj{b}_* \neq b_*$ is on the right, where the balance point is represented by a red $+$. Each of the adjusted balance points is a uniformly bounded distance from the horoballs and/or horopoints associated to the same triangle (\cref{lem:about-balance-points}), and the shortest distance to each of these is represented by a yellow geodesic. The geodesic $h$ is drawn in black between adjusted balance points $\adj{b}_{n+1}$ and $\adj{b}_{m-1}$. In this example, all of the $t_*$ are chosen to be the points of intersection of $h$ with the yellow geodesics.}
    \label{fig:h-close-to-horoballs}
\end{figure}

Finally, consider the case in which $h$ avoids intersecting any of the three bounded length geodesics joining $\adj{b}_i$ to the horoballs or horopoints associated to $\tilde{T}_i$ by passing through both of the horoballs $B_{\dir{\tau_{i-1}}}$ and $B_{\dir{\tau_i}}$.
An example is illustrated in \cref{fig:h-close-to-horoballs-technicality}.
By \cref{lem:about-balance-points} and the tube lemma (\cref{lem:tube-lemma}), the point $\adj{b}_i$ lies within a uniform neighborhood of $B_{\dir{\tau_{i-1}}} \cup B_{\dir{\tau_i}}$ and the subsegment of $h$ between these two horoballs,
$$ h_i^\sharp := h \cap \tilde{T}_i \cap \left( B_{\dir{\sigma_i}}^\circ \cup B_{\dir{\tau_{i-1}}}^\circ \cup B_{\dir{\tau_i}}^\circ \right)^C. $$
Choose $t_i$ to be the closest point on $h_i^\sharp$ to $\adj{b}_i$, which may be in $\partial B_{\dir{\tau_{i-1}}}$ or $\partial B_{\dir{\tau_i}}$.
Then $\DbartoDhat(t_i)$ is uniformly close to $\adj{b}_i$, which is uniformly close to $\DbartoDhat(B_{\dir{\sigma_i}})$. 
\begin{figure}[hp]
    \centering
    \includegraphics[width=\textwidth]{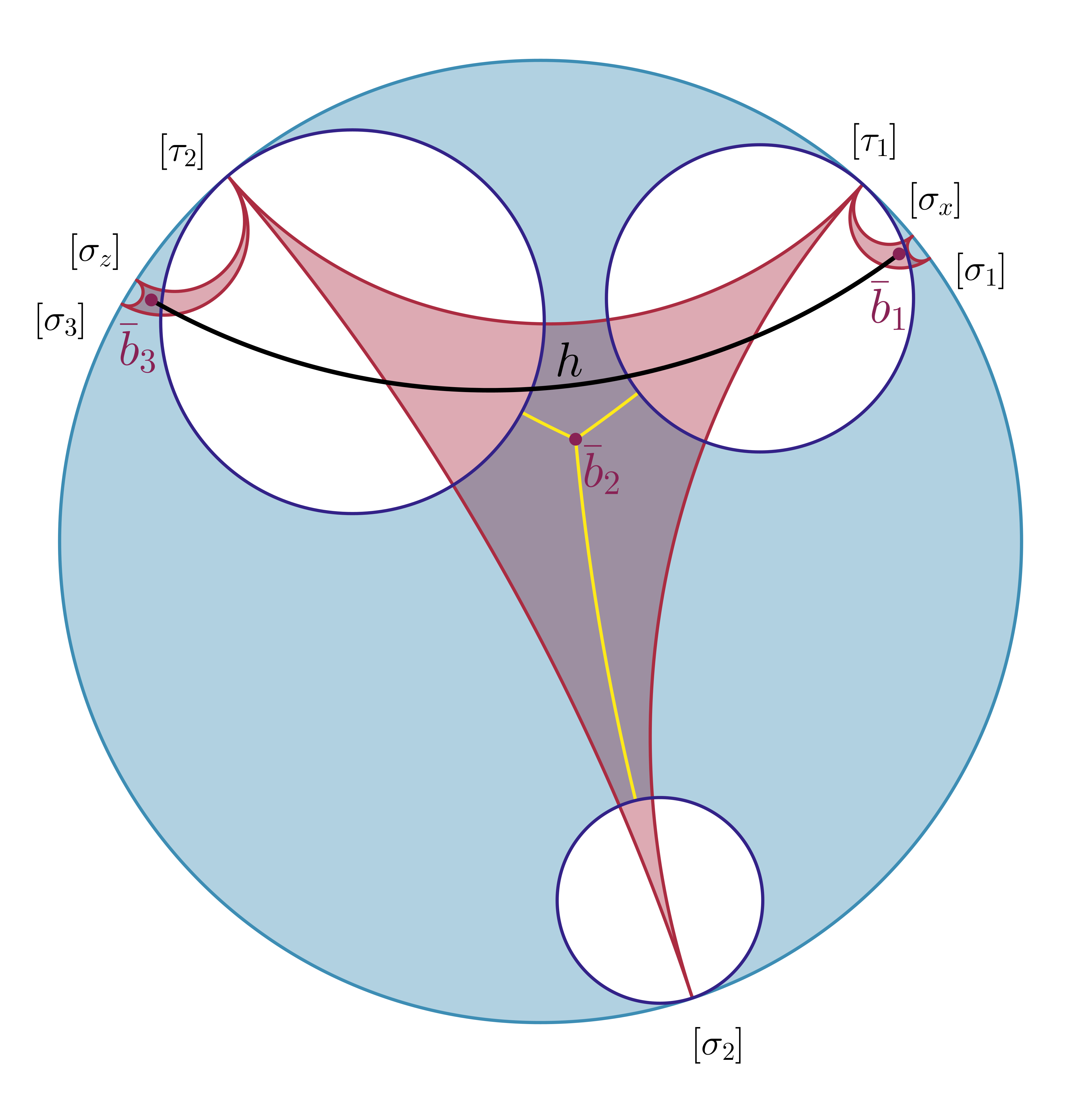}
    \caption{An example of a case in the proof of \cref{claim:h-close-to-horoballs}. The colors and labels are analogous to those in \cref{fig:h-close-to-horoballs}; for simplicity, in this example the convex hull of the limit set of $G$ is assumed to be all of $D$ and some horoballs $B_*$ are not drawn. Because $h$ does not intersect any of the yellow geodesics, $t_2$ will be chosen to be the point on $h$ and in $\tilde{T}_2$ which is closest to $\adj{b}_2$; although $t_2$ and $\adj{b}_2$ may not be uniformly close, their images under $\DbartoDhat$ must be uniformly close. }
    \label{fig:h-close-to-horoballs-technicality}
\end{figure}
\end{proof}

Let $s_i$ be the lift of $t_i$ to $D_y$, and break $h_y$ into segments
$$ h_y^i = [s_i, s_{i+1}], \hspace{.2in} i = n+1, \dots, m-2. $$
By construction, in $D_y$ each $P(s_i)$ is within uniformly bounded distance of $P(B_{\dir{\sigma_i}})$, which is within uniformly bounded distance of $\adj{b}_i$ (note that $P(\adj{b}_i) = \adj{b}_i$).
Recalling that $g_i$ is the geodesic in $D_y$ joining $\adj{b}_i$ and $\adj{b}_{i+1}$, this implies that $P(h_y^i)$ is uniformly close to $P(g_i)$.
Also recalling that $P(g_i)$ and $P(h_i)$ have uniformly bounded Hausdorff distance, each $P(h_i)$ is uniformly close to each $P(h_y^i)$, so $P(h_{n+1} \cup \dots \cup h_{m-2})$ is uniformly close to $P(h_y^{n+1} \cup \dots \cup h_y^{m-2}) = P(h_y)$. 
Finally, recalling that $P(h_y)$ and $P(h_y')$ have uniformly bounded Hausdorff distance, $P(h_{n+1} \cup \cdots \cup h_{m-1})$ and $P(h_y')$ have uniformly bounded Hausdorff distance.
Since $h_y'$ was a piece of the substitute path along the top of the fan and the $h_i$, $n+1 \leq i \leq m-2$ were pieces of the substitute path along the bottom of the fan, this piece of the argument is complete.

Note that in the case where $m-n = 2$---that is, where only one ideal triangle intersects $\hull{G}$---this argument continues to work despite that the geodesic $h$ is in fact not a path but rather the point $\adj{b}_{n+1} = \adj{b}_{m-1}$. In fact, \cref{claim:h-close-to-horoballs} follows immediately from \cref{lem:about-balance-points}.

\hfill

\noindent\textbf{Horizontal pieces $h_n$ ($n \geq 1$) and $h_{m-1}$ ($m \leq k$).}
When $n \geq 1$, the horizontal path $h_n$ still requires consideration. 
Conveniently, $P(h_n)$ must have uniformly bounded length. 
The endpoint $\gamma_n^+$ of $h_n$ is joined to $\gamma_z^+$ by the saddle connection $\tau_n$ in the fiber over $X_{\dir{\sigma_z}}$, where it has uniformly bounded length because the ideal triangle $\tilde{T}_{n+1}$ intersects $\hull{G}$ and because $X_{\dir{\tau_n}}$ is uniformly close to $X_{\dir{\sigma_z}} $(\cref{cor:intersecting-hull-implies-bounded-nonparabolic-saddle-connections}, \cref{cor:ideal-fan-bounded-projection-to-hull}).
Since $\gamma_z^+ = h_y'^-$, the arguments in the previous part of this proof give that $P(\gamma_z^+) = P(h_y'^-)$ is within a uniformly bounded distance of $P(h_y^-) = P(\adj{b}_{n+1})$ in $P(D_y)$, which is within a uniformly bounded distance of the $P$-image of $B_{\dir{\sigma_{n+1}}} \ni \gamma_{n+1}^+ = h_{n+1}^-$ (\cref{claim:h-close-to-horoballs}). 
Lastly, $\gamma_{n+1}$ has uniformly bounded length in the fiber over $X_{\dir{\sigma_{n+1}}}$---either it is a nonparabolic saddle connection associated to an ideal triangle which intersects $\hull{G}$ (\cref{cor:intersecting-hull-implies-bounded-nonparabolic-saddle-connections}), or it is a parabolic saddle connection with bounded length by construction---and $\gamma_{n+1}^- = h_n^+$, so $P(h_n^-)$ and $P(h_n^+)$ must be uniformly close.

When $m \leq k$, a similar argument shows that $P(h_{m-1})$ has uniformly bounded length.

\hfill

\noindent\textbf{Remaining saddle connections.} 
The remaining saddle connections must have uniformly bounded length because their endpoints---which are endpoints of the horizontal paths---have been shown to be uniformly close.
However, it is insightful to see these remaining saddle connections handled explicitly.

Any remaining nonparabolic saddle connections must be associated to ideal triangles which intersect the hull, so each of these saddle connections has uniformly bounded length at its associated horopoint (\cref{cor:intersecting-hull-implies-bounded-nonparabolic-saddle-connections}).
Therefore any isolated saddle connection---that is, any $\sigma_i$ for which $\dir{\sigma_{i-1}} \neq \dir{\sigma_i} \neq \dir{\sigma_{i+1}}$---will not prevent slimness of the collapsed preferred paths. 
(Note that this also resolves any obstruction to slimness posed by the saddle connections $\gamma_z$ and $\gamma_x$ in the cases where $n=0$ and $m=k+1$, respectively.)

Suppose instead that there are consecutive indices 
$$I = \set{i_1, \dots, i_j} \subset \set{n+1, n+2, \dots, m-1}$$ such that $\dir{\sigma_{i_1}} = \dots = \dir{\sigma_{i_j}}$. Denote this common direction by $\dir{\sigma_I}$. Since the ideal triangles $\tilde{T}_{i \in I}$ intersect $\hull{G}$, all the saddle connections in $T_{i \in I}$ have uniformly bounded length at their associated (adjusted) balance points $\adj{b}_i$ (\cref{cor:length-of-saddle-connection-bounded-at-balance-point}), which are all uniformly close to $X_{\dir{\sigma_I}}$ (\cref{lem:about-balance-points}). 
Therefore all of the saddle connections associated to $T_{i \in I}$ have uniformly bounded length in the fiber over $X_{\dir{\sigma_I}}$.
In particular, because all of the $\gamma_{i \in I}$ form one side of the geodesic triangle $f_{X_{\dir{\sigma_I}}}(T_{i_1} \cup \dots \cup T_{i_j})$,
the triangle inequality gives that
$$ \sum_{i \in I} \length{\gamma_{i}} \leq \length{f_{X_{\dir{\sigma_I}}}\left(\tau_{i_1-1}\right)} + \length{f_{X_{\dir{\sigma_I}}}\left(\tau_{i_j} \right)}, $$
which is uniformly bounded above.
Therefore any set of consecutive nonparabolic saddle connections does not prevent slimness of the collapsed preferred paths.
\end{proof}

\subsection{General triangles} 
\label{sec:general-triangles}

It remains to show that any triangle of collapsed preferred paths is uniformly slim.
Equipped with this new version of the fan lemma for finitely generated Veech groups, the subsequent arguments of \cite{ddls-extensions} continue to hold for proving that general triangles of preferred paths are slim. As in \cref{sec:construction}, their results are outlined here and cited wherever their arguments apply essentially verbatim.

\collapsedpreferredpathsformslimtriangles

\begin{proof}[Proof of \cref{thm:collapsed-preferred-paths-form-slim-triangles}]
Let $\delta'$ be the constant from the fan lemma (\cref{lem:fan-lemma}).
Any triangle in $\Delta(x,y,z)$ which has at least one side consisting of exactly one saddle connection can be decomposed into a union of fans; see \cref{fig:decomposed-triangle}. 
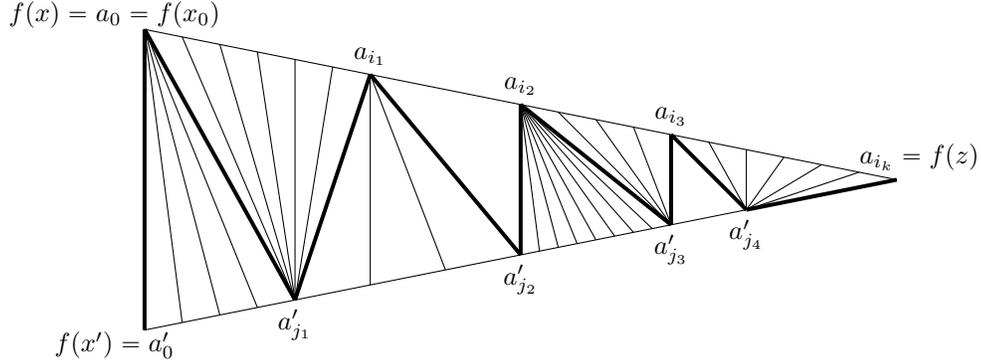
\begin{figure}[htbp]
\begin{center}
\begin{tikzpicture}[scale = 1]
\draw(0,0) -- (-10,-2) -- (-10,2) -- (0,0);
\draw[line width=1.5] (0,0) -- (-2,-.4);
\draw (-2,-.4) -- (-.5,.1);
\draw (-2,-.4) -- (-1,.2);
\draw (-2,-.4) -- (-1.5,.3);
\draw (-2,-.4) -- (-2,.4);
\draw (-2,-.4) -- (-2.5,.5);
\node at (-2,-.4) [below]  {$a'_{j_4}$};
\draw[line width=1.5] (-2,-.4) -- (-3,.6);
\node at (-3, .6) [above] {$a_{i_3}$};
\draw[line width=1.5] (-3,.6) -- (-3,-.6);
\draw (-3,-.6) -- (-3.5,.7);
\draw (-3,-.6) -- (-4,.8);
\draw (-3,-.6) -- (-4.5,.9);
\node at (-3,-.6) [below] {$a'_{j_3}$};
\draw[line width=1.5] (-3,-.6) -- (-5,1);
\draw (-5,1) -- (-3.25,-.65);
\draw (-5,1) -- (-3.5,-.7);
\draw (-5,1) -- (-3.75,-.75);
\draw (-5,1) -- (-4,-.8);
\draw (-5,1) -- (-4.25,-.85);
\draw (-5,1) -- (-4.5,-.9);
\draw (-5,1) -- (-4.75,-.95);
\node at  (-5,1) [above] {$a_{i_2}$};
\draw[line width=1.5] (-5,1) -- (-5,-1);
\node at (-5,-1) [below] {$a'_{j_2}$};
\draw[line width=1.5] (-5,-1) -- (-7,1.4);
\draw (-7,1.4) -- (-6,-1.2);
\draw (-7,1.4) -- (-7,-1.4);
\node at (-7,1.4) [above] {$a_{i_1}$};
\draw[line width=1.5] (-7,1.4) -- (-8,-1.6);
\draw (-8,-1.6) -- (-7.5,1.5);
\draw (-8,-1.6) -- (-8,1.6);
\draw (-8,-1.6) -- (-8.5,1.7);
\draw (-8,-1.6) -- (-9,1.8);
\draw (-8,-1.6) -- (-9.5,1.9);
\node at (-8,-1.6) [below] {$a'_{j_1}$};
\draw[line width=1.5] (-8,-1.6) -- (-10,2);
\draw (-10,2) -- (-9.5,-1.9);
\draw (-10,2) -- (-9,-1.8);
\draw (-10,2) -- (-8.5,-1.7);
\draw[line width=1.5] (-10,2) -- (-10,-2);
\node at (-10.4,2.2)  {$f(x)=a_0 = f(x_0)$};
\node at (-10.4,-2.2) {$f(x')=a'_0$};
\node at (.3,0) [above]  {$a_{i_k} = f(z)$};
\end{tikzpicture}
\caption{Illustration from \cite[Figure 6]{ddls-extensions} of a triangle with exactly one side being a single saddle connection, showing how it can be decomposed into fans. The labeled points are the vertices of the fans.}
\label{fig:decomposed-triangle}
\end{center}
\end{figure}

Slimness of each fan and the ``furthermore'' statement of the fan lemma (\cref{lem:fan-lemma}) ensure that the associated triangle of collapsed preferred paths $\Delta^{\hat{\varsigma}}(x,y,z)$ is $\delta'' = (2 \delta' + 2)$-slim {\cite[Lemma 4.16]{ddls-extensions}}.
General triangles $\Delta(x,y,z)$ may be similarly decomposed into fans, and after a careful treatment of all cases {\cite[Theorem 4.2]{ddls-extensions}} concludes that general triangles of collapsed preferred paths $\Delta^{\hat{\varsigma}}(x,y,z)$ must be $\delta= 3 \delta'' = 3 (2 \delta' + 2)$-slim.
\end{proof}

Coincidentally, this gives an alternate proof for special case of a theorem by \cite{fm-convex-cocompactness} (generalized by \cite{hamenstaedt-hyperbolic-extensions}) which states that a virtually free subgroup of the mapping class group is convex cocompact if and only if its extension group is hyperbolic.

\oldtheoremnewproof

In the sense of \cite{fm-convex-cocompactness}, $G$ is convex cocompact as a discrete subgroup of $\mathrm{Isom}(D)$ if it acts cocompactly on the convex hull of its limit set---that is, if the quotient of $\hull{G}$ by the action of $G$ is compact.
Therefore if $G$ has no parabolic elements, it is convex cocompact as a Fuchsian group.
A consequence is that the $G$-orbit of $X \in \hull{G}$ is quasiconvex.

\begin{proof}
Let $\overline{E}$ be the space constructed in \cref{sec:construction}.
Since there are no horoball preimages to collapse, $\overline{E} = \hat{E}$ and $\Gamma$ acts isometrically and cocompactly on $\overline{E}$ by construction.
It only remains to show that $\overline{E}$ (equipped with the metric $\overline{d}$) is hyperbolic, which is accomplished by applying the guessing geodesics criterion (\cref{lem:guessing-geodesics}).
Here, the sets $L(x,y)$ in the statement of the guessing geodesics criterion are precisely the preferred paths $\varsigma(x,y) = \hat{\varsigma}(x,y)$.

It was shown in the construction that $(\overline{E}, \overline{d})$ is a length space.
Because the collection $\Sigma$ of all cone points is $\Gamma$-invariant and $\overline{E}/\Gamma$ is compact, there exists some constant $R>0$ so that $\Sigma$ is $R$-dense in $\overline{E}$.
For $x,y \in \Sigma$ there is a preferred path $\varsigma(x,y) \subset \overline{E}$, and the preferred paths form slim triangles (\cref{thm:collapsed-preferred-paths-form-slim-triangles}), satisfying condition (1) of the guessing geodesics criterion.

To verify condition (2) of the guessing geodesics criterion, suppose that $\overline{d}(x,y) \leq 3R$ for some $x \in E_X$ and $y \in E_Y$.
Then $\rho(X,Y) \leq 3R$ in $D$.
Because $f_{X,Y} = f_X \vert_{E_Y} $ is $e^{\rho(X,Y)}$-bilipschitz (\cref{sec:total-space-E}), the length of $f_X(\varsigma(x,y))$ is bounded by $3Re^{3R}$.
The geodesic in $E_X$ joining $x$ to $f_X(y)$ also has bounded length and in particular is a concatenation of saddle connections each with length bounded above (by $3Re^{3R}$) and below (uniformly, due to \cref{cor:no-short-saddle-connections-in-truncated-hull}).
So there are at most $n$ saddle connections in $\varsigma(x,y)$, where $n$ depends only on $R$.
Each of these saddle connections $\sigma$ has length bounded above in the fiber over $X$ and bounded below in the fiber at its associated horopoint ${X_{\dir{\sigma}}}$, so $X$ and $X_{\dir{\sigma}}$ are uniformly close for all saddle connections $\sigma$ appearing in $\varsigma(x,y)$.
Finally, since $\varsigma(x,y)$ consists of at most $2n+1$ pieces (hyperbolic geodesics in the horizontal fibers and saddle connections in the vertical fibers), where $n$ depends only on $R$, and each of those pieces has uniformly bounded length, it follows that $L(x,y) = \varsigma(x,y)$ has bounded length---and therefore bounded diameter, as required.
\end{proof}

%%%
%%%
%%%
%%%
%%%
\section{Hierarchical Hyperbolicity}
\label{sec:hhs}

After establishing a nice action of $\Gamma$ on a hyperbolic space $\hat{E}$, a natural next step is to prove that $\Gamma$ is hierarchically hyperbolic.
\hierarchicalhyperbolicity
Nearly all of \cite{ddls-more-extensions} continues to hold essentially verbatim when $G$ is finitely generated but not necessarily a lattice: Constructing a hierarchically hyperbolic space structure once $\hat{E}$ is shown to be hyperbolic is almost solely concerned with what happens with the Bass-Serre trees $T_\alpha$ for parabolic directions $\alpha$ for $G$, so the existence of nonparabolic directions does not interfere with the construction of the HHS.
For this same reason, $\Gamma$ is already a hierarchically hyperbolic group whenever $G$ has no parabolic limit points; indeed, $\Gamma$ is hyperbolic by \cites{fm-convex-cocompactness}{hamenstaedt-hyperbolic-extensions} (or by \cref{thm:new-old}).
This section is deliberately kept brief; the reader is advised to refer to \cite{ddls-more-extensions} for more comprehensive coverage of the HHS structure and background information.

Let $\calX$ be the flag simplicial complex with $1$-skeleton $\calX^{(1)}$ as given in \cite[Section 4.2]{ddls-more-extensions}. 
Let $\calW$ be the $\calX$-graph whose vertices are the maximal simplices of $\calX$ and whose edges are as given in \cite[Section 4.2]{ddls-more-extensions}.
Let $\calX^{+\calW}$ be the $\calW$-augmented dual graph as given in \cite[Definition 4.2]{ddls-more-extensions}. 
The following statement is comparable to {\cite[Lemma 4.17]{ddls-more-extensions}}.
While this proof is nearly identical to the original, it includes consideration for when nontrivial saddle connections might appear in the case the $G$ is not a lattice.

\begin{lemma}[\toolttt{Empty simplex}]
\label{lem:empty-simplex}
The graph $\calX^{+\calW}$ is quasi-isometric to $\hat{E}$.
\end{lemma}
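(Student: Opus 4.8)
The plan is to transcribe the proof of \cite[Lemma 4.17]{ddls-more-extensions}, exhibiting an explicit quasi-isometry $\Phi\colon \calX^{+\calW}\to\hat{E}$ and verifying the estimates with the combinatorial-path machinery of \cref{sec:hyperbolicity}; the one point needing real adaptation is the possible presence of nonparabolic saddle connections. First I would define $\Phi$ on vertices: following the identifications of \cite[Section 4.2]{ddls-more-extensions}, each vertex of $\calX^{+\calW}$ is canonically associated to a point of $\calV\cup P(\Sigma)\subset\hat{E}$ --- a vertex of some Bass--Serre tree $T_\alpha$, or the $P$-image of a cone point --- and $\Phi$ sends it there. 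Since conversely every element of $\calV$ arises this way and $\calV$ is $R_0$-dense in $\hat{E}$ (as in the proof of \cref{lem:combinatorial-paths-bounded-length}), $\Phi$ is coarsely onto.

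Next I would show $\Phi$ is coarsely Lipschitz, which reduces to bounding $\hat{d}(\Phi(u),\Phi(v))$ for $u,v$ joined by an edge of $\calX^{+\calW}$. By construction such an edge either records adjacency in $\calX$ --- cone points sharing a saddle connection, or adjacent spines within a common fiber --- or records proximity in the $\calX$-graph $\calW$ of maximal simplices containing $u$ and $v$; in each case the two associated points of $\hat{E}$ are joined by the $P$-image of a single saddle connection or of a horizontal jump. A horizontal jump has length bounded by the construction of the horoballs, and a parabolic saddle connection is collapsed to a point by $P$, exactly as in the lattice case. The genuinely new case is a \emph{nonparabolic} saddle connection $\sigma$ appearing as an edge of $\calX$: here the combinatorial constraints defining admissible (maximal) simplices, together with \cref{cor:to-generalized-veech-dichotomy}, force $\dir{\sigma}$ to be associated to an ideal triangle meeting $\hull{G}$, so $f_{X_{\dir{\sigma}}}(\sigma)$ has uniformly bounded length by \cref{cor:intersecting-hull-implies-bounded-nonparabolic-saddle-connections} and \cref{lem:about-balance-points}, and hence its $P$-image has uniformly bounded $\hat{d}$-diameter. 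Thus adjacency in $\calX^{+\calW}$ gives uniformly bounded $\hat{d}$-distance.

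For the reverse estimate I would build a coarse inverse. Given $x,y\in\calV$, \cref{lem:combinatorial-criterion} furnishes a combinatorial path from $x$ to $y$ of length at most $C\hat{d}(x,y)$ built from at most $2C\hat{d}(x,y)$ horizontal jumps and saddle connections; by the definition of $\calX^{+\calW}$ each such piece is within uniformly bounded distance of an edge, or a bounded-length edge-path, of $\calX^{+\calW}$, so concatenation yields an edge-path of length $O(\hat{d}(x,y))$ joining the vertices associated to $x$ and $y$. Combined with the previous paragraph and coarse surjectivity, this makes $\Phi$ a quasi-isometry.

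I expect the main obstacle to be precisely the step involving nonparabolic saddle connections. In the lattice treatment of \cite{ddls-more-extensions} every saddle connection is parabolic and $P$-collapses to a point, so an edge of $\calX^{+\calW}$ contributes distance $0$ in $\hat{E}$ and the Lipschitz bound is immediate; here one must instead argue carefully that such edges occur only in the ``uniformly thick'' regime governed by ideal triangles meeting $\hull{G}$, and that along them the relevant saddle connection has uniformly bounded length. Once this is in place --- via \cref{cor:to-generalized-veech-dichotomy}, \cref{cor:intersecting-hull-implies-bounded-nonparabolic-saddle-connections}, and \cref{lem:about-balance-points} --- the remainder of the proof is a routine transcription of \cite[Lemma 4.17]{ddls-more-extensions}.
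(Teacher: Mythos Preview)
Your overall architecture is right and matches the paper: define the map on vertices following \cite[Section 4.2]{ddls-more-extensions}, use $R_0$-density of $\calV$ for coarse surjectivity, and use \cref{lem:combinatorial-criterion} to build the coarse inverse. Your step 4 is essentially what the paper does.

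The gap is in your step 3. You claim that when a nonparabolic saddle connection $\sigma$ appears as an edge of $\calX^{+\calW}$, ``the combinatorial constraints defining admissible (maximal) simplices, together with \cref{cor:to-generalized-veech-dichotomy}, force $\dir{\sigma}$ to be associated to an ideal triangle meeting $\hull{G}$,'' and then invoke \cref{cor:intersecting-hull-implies-bounded-nonparabolic-saddle-connections}. This implication is not established anywhere: nothing in the definition of $\calX$ or $\calW$ guarantees that a nonparabolic saddle connection arising as an edge sits in a Euclidean triangle whose ideal companion meets $\hull{G}$, and \cref{cor:to-generalized-veech-dichotomy} only tells you $\dir{\sigma}\notin\Lambda(G)$, which is no help here. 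So the forward Lipschitz bound is not actually proved.

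The paper avoids this entirely. It does not try to bound an arbitrary nonparabolic saddle connection appearing as an abstract edge; instead, the only place nonparabolic saddle connections enter is as pieces of the combinatorial path produced by \cref{lem:combinatorial-criterion}. By the construction inside that lemma (specifically \cref{lem:combinatorial-paths-bounded-length}), those saddle connections already have uniformly bounded length, so each one is itself a bounded-length path in $\calX^{+\calW}$ and nothing further is needed. In other words, you have located the genuinely new case correctly, but you have put it in the wrong direction of the quasi-isometry and reached for heavier machinery than is required; the bound on the saddle connection comes for free from the combinatorial-path construction, not from the ideal-triangle analysis of \cref{sec:slimness}.
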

\begin{proof}
Starting with the map $Z: \calX^{(1)} \rightarrow \sqcup T_\alpha$ defined in \cite[Section 4.2]{ddls-more-extensions}, extend this to a map $Z': \calX^{+\calW} \rightarrow \hat{E}$ as described in the proof of \cite[Lemma 4.17]{ddls-more-extensions}. As in that proof, $Z$ is a one-sided inverse of $Z'$. It remains to show that $Z'$ is coarsely Lipschitz by demonstrating that for any edge $e = [x,y]$ of $\calX^{+\calW}$ with $v=Z(x)$ and $w = Z(y)$ in $\calV$, there exists a path of uniformly bounded length in $\calX^{+\calW}$ joining $v$ and $w$.

Now by \cref{lem:combinatorial-criterion}, any $v \in T_\alpha$, $w \in T_\beta$ are joined by a combinatorial path of length proportional to $\hat{d}(v,w)$.
Since the lengths of the saddle connections are uniformly bounded below by \cref{cor:no-short-saddle-connections-in-truncated-hull} and the combinatorial path is an alternating concatenation of horizontal jumps and saddle connections, the total number of horizontal jumps and saddle connections in the combinatorial path is bounded in terms of $\hat{d}(v,w)$. 
Therefore it suffices to prove the lemma in the cases that $v$ and $w$ are joined by either a horizontal jump or a nonparabolic saddle connection, where either is of uniformly bounded length.
If $v$ and $w$ are joined by a horizontal jump of uniformly bounded length, then the proof proceeds as in \cite{ddls-more-extensions} to produce a path of uniformly bounded length joining $v$ to $w$ in $\calX^{+\calW}$.
Alternatively, if $v$ and $w$ are joined by a saddle connection of uniformly bounded length, then this saddle connection is itself a path of uniformly bounded length joining $v$ to $w$ in $\calX^{+\calW}$.
\end{proof}

\begin{proof}[Proof of \cref{thm:hhs}]
The proof of \cite[Theorem 4.16]{ddls-more-extensions} holds verbatim after substituting \cref{lem:empty-simplex} for \cite[Lemma 4.17]{ddls-more-extensions}.
Therefore the pair $(\calX, \calW)$ is a combinatorial HHS by the definition found in \cite[Definition 4.8]{ddls-more-extensions}.
The rest follows from \cite[Theorem 4.11]{ddls-more-extensions}.
\end{proof}

%%%
%%%
%%%
%%%
%%%
% BIBLIOGRAPHY
\pagebreak
\printbibliography[heading=bibintoc,title={Bibliography}]

%%%
%%%
%%%
%%%
%%%
% NOMENCLATURE
\pagebreak
\printnomenclature

\end{document}